\theoremstyle{break}
\newtheorem{theorem}{Theorem}
\newtheorem{corollary}{Corollary}
\newtheorem{proposition}{Proposition}
\newtheorem{lemma}{Lemma}
\newtheorem{definition}{Definition}
\newtheorem{example}{Example}
\newtheorem{remark}{Remark}
\newcommand{\lran}{\longrightarrow}
\def\lint#1{\left[{#1}\right[}
\def\rint#1{\left]{#1}\right]}
\def\cint#1{\left[{#1}\right]}
\def\opint#1{\left]{#1}\right[}
\def\uint{[0,1]}
\def\ouint{\left]0,1\right[}
\def\prooftxt{\mbox{\large\sc proof: }}
\def\konproof{\rm\hspace*{\fill}$\Box$}
\newenvironment{proof}{\par\smallskip\noindent\prooftxt }%
{\konproof\par\vspace*{6pt}}
\newcounter{inst}
\newenvironment{mylist}{\begin{list}{(\roman{enumi})}{\usecounter{enumi}%
                        \setlength{\listparindent}{0pt}%
                        \setlength{\labelsep}{0.4em}%
                        \setlength{\labelwidth}{2.1em}
                        \setlength{\leftmargin}{2em}%
                        \setlength{\itemsep}{-0.8ex}%
                        \setlength{\topsep}{0.4ex}%
                        \setlength{\rightmargin}{0pt}}}{\end{list}}
\date{}
\author{
{\bf Andrea Mesiarov\' a-Zem\' ankov\' a} \\ \ Mathematical Institute, Slovak Academy of Sciences \\
Bratislava, SLOVAKIA \\
zemankova@mat.savba.sk
}
\title{\bf  Characterization of uninorms with continuous underlying t-norm and t-conorm by means of an extended ordinal sum}
\begin{document}

\maketitle \thispagestyle{empty}

\begin{abstract}  The uninorms with continuous underlying t-norm and t-conorm
 are characterized via an extended ordinal sum construction. Using the results of \cite{mulfun}, where each   uninorm with continuous underlying operations was characterized by properties of its set of discontinuity points, it is shown that each such a uninorm
 can be decomposed into an extended ordinal sum of representable uninorms, continuous Archimedean t-norms, continuous Archimedean t-conorms and internal uninorms.

{\bf Keywords:}  uninorm, representable uninorm, additive generator, t-norm, ordinal sum
\end{abstract}

\section{Introduction}
\label{sec1}

The (left-continuous) t-norms and their dual t-conorms have an indispensable role in many domains \cite{haj98,16,sug85}.
 Each continuous t-norm (t-conorm) can be expressed as an ordinal sum of continuous Archimedean t-norms (t-conorms), while each Archimedean t-norm (t-conorm) is generated by an additive generator (see \cite{AFS06,KMP00}).
 Generalizations of t-norms and t-conorms that can model bipolar behaviour are uninorms
 (see \cite{FYR97,jacon,YR96}). The class of uninorms is widely used both in theory \cite{PT1,uniru} and in applications \cite{PA2,PA1}.
 The complete characterization of uninorms with continuous underlying t-norm and t-conorm has been in the center of the interest for a long time, however, only partial results were achieved (see \cite{ordut,FB12,Lili, Lilif,MesPet14,Rut14}).

 In \cite{genuni} we have introduced ordinal sum of uninorms and in \cite{repord} we have characterized uninorms that are ordinal sums of representable uninorms. This paper is a continuation of the paper \cite{mulfun}, where we have characterized uninorms with continuous
 underlying operations by properties of their set of discontinuity points.
 Our aim is to completely characterize all uninorms with continuous underlying functions and obtain a similar representation as in the case of t-norms and t-conorms. In this paper we will therefore  show that  each uninorm with continuous underlying t-norm and t-conorm can be decomposed into an extended ordinal sum of representable uninorms, continuous Archimedean  t-norms, continuous Archimedean t-conorms and internal uninorms.
 Let us now recall all necessary basic notions.

 A triangular norm  is a binary function $T \colon \uint^2 \lran \uint $ which is commutative,
associative, non-decreasing in both variables and $1$ is its neutral element. Due to the associativity, $n$-ary form of any t-norm is uniquely given and  thus it can be extended to an aggregation function working on  $\bigcup_{n\in \mathbb{N}}\uint^n.$
Dual functions to t-norms are t-conorms. A triangular conorm  is a binary function $C \colon \uint^2 \lran \uint $ which is commutative,
associative, non-decreasing in both variables and $0$ is its neutral element.
The duality between t-norms and t-conorms is expressed by the fact that from any t-norm $T$ we can obtain its dual t-conorm $C$ by the equation
$$C(x,y)=1-T(1-x,1-y)$$
and vice-versa.

\begin{proposition}
Let $t\colon \uint\lran \cint{0,\infty}$ ($c\colon \uint\lran \cint{0,\infty}$) be a continuous strictly decreasing (increasing) function such that $t(1)=0$ ($c(0)=0$). Then the binary  operation  $T \colon \uint^2 \lran \uint $ ( $C \colon \uint^2 \lran \uint $)  given by
$$T(x,y)=t^{-1}(\min(t(0),t(x)+t(y)))$$
$$C(x,y)=c^{-1}(\min(c(1),c(x)+c(y)))$$
is a continuous t-norm (t-conorm). The function $t$ ($c$) is called an \emph{additive generator} of $T$ ($C$).
\end{proposition}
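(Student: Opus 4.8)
The plan is to verify the four defining properties of a t-norm for $T$ directly from the closed form, viewing $t$ as a genuine continuous strictly decreasing bijection from $\uint$ onto the interval $\cint{0,t(0)}$ and the map $u\mapsto t^{-1}(\min(t(0),u))$ as its ``truncated pseudo-inverse'' defined on all of $\cint{0,\infty}$. The t-conorm claim will then follow without extra work from the order-reversing substitution $x\mapsto 1-x$ together with the duality $C(x,y)=1-T(1-x,1-y)$.

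First I would check that $T$ is well defined: since $t$ is strictly decreasing with $t(1)=0$, its range is exactly $\cint{0,t(0)}$, so $t^{-1}$ is defined precisely on $\cint{0,t(0)}$; as $\min(t(0),t(x)+t(y))\in\cint{0,t(0)}$ for every $x,y\in\uint$, the composition makes sense and lands in $\uint$. Commutativity is immediate from the symmetry of $t(x)+t(y)$. For the neutral element, $T(x,1)=t^{-1}(\min(t(0),t(x)+t(1)))=t^{-1}(\min(t(0),t(x)))=t^{-1}(t(x))=x$, using $t(x)\le t(0)$. Monotonicity is obtained by composing monotone maps: $y\mapsto t(x)+t(y)$ is non-increasing, $u\mapsto\min(t(0),u)$ is non-decreasing, and $t^{-1}$ is decreasing, so $y\mapsto T(x,y)$ is non-decreasing, and symmetrically in $x$.

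The only step requiring a genuine argument is associativity, so the hard part will be managing the truncation at $t(0)$. Here I would first record the absorption identity $t(T(x,y))=\min(t(0),t(x)+t(y))$, which holds because $t\circ t^{-1}=\mathrm{id}$ on $\cint{0,t(0)}$. Applying it twice gives
$$t\big(T(T(x,y),z)\big)=\min\big(t(0),\,t(T(x,y))+t(z)\big)=\min\big(t(0),\,\min(t(0),t(x)+t(y))+t(z)\big)=\min\big(t(0),\,t(x)+t(y)+t(z)\big),$$
where the last equality is the elementary fact that $\min(a,\min(a,b)+c)=\min(a,b+c)$ for all $a,b,c\in\cint{0,\infty}$ (check the two cases $b\le a$ and $a\le b$). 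The right-hand side is symmetric in $x,y,z$, and the same computation applied to $T(x,T(y,z))$ produces the identical value; since $t$ is injective this yields $T(T(x,y),z)=T(x,T(y,z))$.

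Finally, for continuity: $t$ is continuous by hypothesis, addition is continuous on $\cint{0,\infty}^2$, $u\mapsto\min(t(0),u)$ is continuous, and $t^{-1}$, being the inverse of a continuous strictly monotone bijection between the order-compact intervals $\uint$ and $\cint{0,t(0)}$, is itself continuous; hence $T$ is continuous as a composition of continuous maps. The point to keep an eye on is the boundary behaviour in the strict case $t(0)=\infty$ versus the nilpotent case $t(0)<\infty$, but in both the clamping by $\min(t(0),\cdot)$ is exactly what keeps the argument inside $\mathrm{dom}(t^{-1})$ and preserves continuity, so no separate case split is needed. The t-conorm statement follows verbatim by setting $\tilde t(x):=c(1-x)$, which is continuous strictly decreasing with $\tilde t(1)=0$, and observing that $c^{-1}(\min(c(1),c(x)+c(y)))=1-\tilde t^{-1}(\min(\tilde t(0),\tilde t(1-x)+\tilde t(1-y)))=1-T(1-x,1-y)$, so $C$ inherits continuity and, by the standard duality, is a t-conorm.
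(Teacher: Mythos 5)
Your proof is correct and follows the standard classical argument for additive generators (the absorption identity $t(T(x,y))=\min(t(0),t(x)+t(y))$ together with the elementary fact $\min(a,\min(a,b)+c)=\min(a,b+c)$ for nonnegative reals, plus duality for the t-conorm case); the paper itself states this proposition without proof, deferring to the cited literature, where exactly this argument appears. Nothing is missing: the well-definedness, neutrality, monotonicity, associativity and continuity checks are all handled properly, including the truncation at $t(0)$ in both the strict and nilpotent cases.
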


An additive generator of a continuous t-norm $T$ (t-conorm $C$) is uniquely determined up to a positive multiplicative constant.

Now let us recall an ordinal sum construction for t-norms and t-conorms \cite{KMP00}.
However, first we recall an original definition of an ordinal sum of semigroups by Clifford \cite{cli}.

\begin{theorem}
Let $A\neq \emptyset$ be a totally ordered set and $(G_{\alpha})_{\alpha\in A}$ with $G_{\alpha}=(X_{\alpha},*_{\alpha})$ be a family of semigroups.
Assume that for all
$\alpha,\beta\in A$ with
$\alpha<\beta$ the sets $X_{\alpha}$
 and $X_{\beta}$ are either disjoint or that $X_{\alpha} \cap X_{\beta}=\{x_{\alpha,\beta}\},$  where
$x_{\alpha,\beta}$ is both the neutral element of $G_{\alpha}$
 and the annihilator of $G_{\beta}$ and where for each $\gamma\in A$ with
$\alpha<\gamma<\beta$ we
have $X_{\gamma}=\{x_{\alpha,\beta}\}.$ Put $X=\bigcup\limits_{\alpha\in A}X_{\alpha}$
 and define the binary operation $*$ on $X$ by
$$x*y=\begin{cases} x*_{\alpha} y &\text{if $(x,y)\in X_{\alpha} \times X_{\alpha},$} \\
x &\text{if $(x,y)\in X_{\alpha} \times X_{\beta}$ and $\alpha<\beta,$} \\
y &\text{if $(x,y)\in X_{\alpha} \times X_{\beta}$ and $\alpha>\beta.$} \\ \end{cases}$$
Then $G =(X,*)$ is a semigroup. The semigroup $G$ is commutative if and only if for each
 $\alpha \in A$ the semigroup
$G_{\alpha}$
 is commutative. \label{thcli}
\end{theorem}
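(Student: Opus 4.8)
The plan is to verify, in order, that the operation $*$ is \emph{well defined}, that it is \emph{associative}, and finally that $G=(X,*)$ is \emph{commutative} if and only if every $G_\alpha$ is. Closure is immediate, since $x*y$ is always one of $x$, $y$, or an element $x*_\alpha y\in X_\alpha\subseteq X$. For well-definedness the only issue is that a point may lie in several of the sets $X_\alpha$, and I would first record two structural facts. For $x\in X$ set $I(x)=\{\alpha\in A:\ x\in X_\alpha\}$. If $\alpha<\beta$ both belong to $I(x)$, then $X_\alpha\cap X_\beta=\{x_{\alpha,\beta}\}$ forces $x=x_{\alpha,\beta}$, and the hypothesis then gives $X_\gamma=\{x\}$ for every $\gamma$ with $\alpha<\gamma<\beta$; hence $I(x)$ is a nonempty \emph{convex} subset of $A$. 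Secondly, any two disjoint convex subsets of a totally ordered set are comparable, i.e.\ one lies entirely below the other (a short argument by contradiction using convexity). Now take $x,y\in X$. If $I(x)\cap I(y)=\emptyset$, then, say, $I(x)$ lies entirely below $I(y)$, so every admissible choice of indices falls into the second defining clause and returns $x$; there is no ambiguity. If $\alpha_0\in I(x)\cap I(y)$, the first clause returns $x*_{\alpha_0}y$; should a second index $\alpha_1\in I(x)\cap I(y)$ occur, then $x$ and $y$ are both equal to $x_{\alpha_0,\alpha_1}$ and the neutral/annihilator properties give $x*_{\alpha_0}y=x*_{\alpha_1}y=x_{\alpha_0,\alpha_1}$; and whenever some alternative reading puts $x$ and $y$ in different blocks, the shared point is a neutral element of the lower one, so the neutral-element identity makes that value coincide with $x*_{\alpha_0}y$. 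This settles well-definedness.

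For associativity I would fix $x,y,z\in X$ and split according to the relative positions of the convex sets $I(x),I(y),I(z)$. If all three share a common block $X_\alpha$, then, since $x*_\alpha y\in X_\alpha$ and $y*_\alpha z\in X_\alpha$, the identity $(x*y)*z=x*(y*z)$ is exactly associativity of $*_\alpha$. In a ``mixed'' configuration one evaluates both sides using the two cross-clauses and the neutral/annihilator properties: for instance, if $I(x)$ lies below $I(y)$, which lies below $I(z)$, then $(x*y)*z=x*z=x=x*y=x*(y*z)$; if $x,y$ lie in a common block $X_\alpha$ and every index of $z$ exceeds $\alpha$, then $y*z=y$, hence $x*(y*z)=x*y=x*_\alpha y=(x*_\alpha y)*z=(x*y)*z$; and the remaining orderings are handled in the same way. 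The one genuinely delicate point — which I expect to be the main obstacle — is when an internal product $x*_\alpha y$ happens to equal a boundary point $x_{\alpha,\delta}$ shared with a higher block $X_\delta$: one must check that reading this value as an element of $X_\alpha$ versus as an element of $X_\delta$ produces the same final answer. This works out because any element lying strictly between the blocks $X_\alpha$ and $X_\delta$ is, by the hypothesis, forced to equal $x_{\alpha,\delta}$ itself, and the annihilator property of $x_{\alpha,\delta}$ in $G_\delta$ disposes of the case in which the third element sits in $X_\delta$. Apart from this bookkeeping the verification is a finite, routine case check.

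Finally, commutativity. The two cross-clauses are symmetric: $(x,y)\in X_\alpha\times X_\beta$ with $\alpha<\beta$ returns $x$, and $(y,x)\in X_\beta\times X_\alpha$ with $\beta>\alpha$ also returns $x$; hence on pairs from distinct blocks $*$ is automatically commutative. On a common block $X_\alpha$ the operation returns $x*_\alpha y$, which equals $y*_\alpha x$ precisely when $G_\alpha$ is commutative; since a pair $x,y$ lies in a common block exactly when some value $x*_\alpha y$ is used, $*$ is commutative if and only if every $G_\alpha$ is. Together with the associativity established above, this shows that $G=(X,*)$ is a semigroup with the asserted commutativity characterization.
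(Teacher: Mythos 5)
The paper does not prove this statement at all: Theorem~\ref{thcli} is quoted verbatim from Clifford's 1954 paper as a known result, so there is no in-paper proof to compare yours against. Judged on its own, your outline is the standard argument and is essentially correct. The two structural observations you isolate --- that $I(x)=\{\alpha : x\in X_{\alpha}\}$ is convex because a point shared by $X_{\alpha}$ and $X_{\beta}$ must be $x_{\alpha,\beta}$ and every intermediate $X_{\gamma}$ collapses to that singleton, and that disjoint convex subsets of a chain are comparable --- are exactly what makes the well-definedness and the case analysis for associativity go through, and you correctly identify the one genuinely delicate spot, namely an internal product $x*_{\alpha}y$ landing on a boundary point shared with a higher block, which the annihilator property of $x_{\alpha,\beta}$ in $G_{\beta}$ resolves. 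Two small points of precision you should tighten in a full write-up: in the well-definedness step you say the shared point is ``a neutral element of the lower one,'' but when the alternative reading places the shared point in the \emph{higher} block you need its annihilator property there, not the neutral-element identity (both readings then return the shared point itself, so the conclusion stands); and the associativity check, which you leave as ``routine,'' does require running through all orderings of the three convex sets $I(x),I(y),I(z)$, including the configurations where a product migrates to a boundary point --- none of these fail, but they are the substance of the proof rather than an afterthought. The commutativity equivalence is handled correctly in both directions.
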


\begin{proposition} Let $K$ be a finite or countably infinite index set and
let $(\opint{a_k,b_k})_{k\in K}$ ($(\opint{c_k,d_k})_{k\in K}$) be a disjoint system of open subintervals of $\uint.$ Let $(T_k)_{k\in K}$ ($(C_k)_{k\in K}$) be
a system of  t-norms (t-conorms). Then
the ordinal sum $T =(\langle a_k, b_k,T_k \rangle \mid  k \in K)$ ($C =(\langle a_k, b_k,C_k \rangle \mid  k \in K)$) given by
$$T(x,y)=\begin{cases} a_k +(b_k-a_k) T_k(\frac{x-a_k}{b_k-a_k},\frac{y-a_k}{b_k-a_k}) &\text{if $(x,y)\in \lint{a_k,b_k}^2 ,$} \\
\min(x,y) &\text{else} \end{cases}$$
and
$$C(x,y)=\begin{cases} c_k +(d_k-c_k) C_k(\frac{x-c_k}{d_k-c_k},\frac{y-c_k}{d_k-c_k}) &\text{if $(x,y)\in \rint{c_k,d_k}^2 ,$} \\
\max(x,y) &\text{else} \end{cases}$$
is a  t-norm (t-conorm). The t-norm $T$ (t-conorm $C$) is continuous if and only if all summands $T_k$ ($C_k$) for $k\in K$ are continuous.
\end{proposition}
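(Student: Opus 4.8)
The plan is to derive the commutativity and associativity of $T$ from Clifford's Theorem~\ref{thcli}, and to verify the neutral element, monotonicity, and the continuity equivalence directly. I treat the t-norm case; the t-conorm case follows by the duality $x\mapsto 1-x$, which transforms the ordinal sum of the t-conorms $C_k$ into the ordinal sum of the dual t-norms.

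First I set up the semigroup family. For each $k\in K$ let $\star_k$ be the rescaled copy of $T_k$ on the \emph{closed} interval $\cint{a_k,b_k}$, i.e.\ $\star_k(x,y)=a_k+(b_k-a_k)T_k\left(\frac{x-a_k}{b_k-a_k},\frac{y-a_k}{b_k-a_k}\right)$ for $x,y\in\cint{a_k,b_k}$; since $T_k$ is a t-norm, $G_k=(\cint{a_k,b_k},\star_k)$ is a commutative semigroup with neutral element $b_k$ and, because $T_k(0,\cdot)=0$, with annihilator $a_k$. For each point $x$ of $P:=\uint\setminus\bigcup_{k\in K}\cint{a_k,b_k}$ let $G_x=(\{x\},\cdot)$ be the trivial one-element semigroup. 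Index this family by $A=K\cup P$, totally ordered by the usual order on the real numbers of $P$, the usual order of the intervals $\cint{a_k,b_k}$ among themselves, and the evident position of each such interval relative to each point of $P$ (this is well defined, as no point of $P$ lies inside any $\cint{a_k,b_k}$).

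Next I check the hypotheses of Theorem~\ref{thcli}. Since the open intervals $\opint{a_k,b_k}$ are pairwise disjoint, two distinct closed intervals $\cint{a_k,b_k},\cint{a_j,b_j}$ are either disjoint or meet in a single endpoint $b_k=a_j$; in the latter case (say $k<j$ in $A$) that endpoint is the neutral element of $G_k$ and the annihilator of $G_j$, and no index of $A$ lies strictly between $k$ and $j$; every singleton $G_x$ with $x\in P$ is disjoint from all the remaining summands. Hence Theorem~\ref{thcli} produces a commutative semigroup operation $*$ on $\bigcup_{\gamma\in A}X_\gamma=\uint$. Reading off the formula, $*$ restricted to $\cint{a_k,b_k}^2$ equals $\star_k$, while in any instance involving two distinct indices $*$ returns the argument carried by the smaller index, that is, the smaller of the two real numbers, i.e.\ their minimum; since $\star_k(b_k,y)=y=\min(b_k,y)$ and symmetrically in the second coordinate, this $*$ is precisely the operation $T$ of the statement. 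Thus $T$ is commutative and associative. Moreover, no half-open block $\lint{a_k,b_k}$ contains $1$, so $T(x,1)=\min(x,1)=x$ and $1$ is the neutral element. For monotonicity fix $z$ and $x\le y$: if $z\in\lint{a_k,b_k}$ for the unique such $k$, a short case analysis according to whether $x$, resp.\ $y$, lies in $\lint{a_k,b_k}$ — using $0\le T_k(u,v)\le\min(u,v)$ — yields $T(x,z)\le T(y,z)$; if $z$ lies in no $\lint{a_k,b_k}$, then $T(x,z)=\min(x,z)\le\min(y,z)=T(y,z)$. Hence $T$ is non-decreasing, and therefore a t-norm.

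The continuity equivalence rests on the observation that $T$ restricted to the \emph{closed} square $\cint{a_k,b_k}^2$ coincides with $\star_k$: on $\lint{a_k,b_k}^2$ this is the definition, and on the remaining boundary, where $x=b_k$ (and symmetrically where $y=b_k$), both sides equal $\min(b_k,y)=y$, by the fact that $1$ is the neutral element of $T_k$. As the affine bijection $\cint{a_k,b_k}\to\uint$ is a homeomorphism, $\star_k$ is continuous on $\cint{a_k,b_k}^2$ if and only if $T_k$ is continuous. Consequently, if $T$ is continuous then so is its restriction to each $\cint{a_k,b_k}^2$, whence every $T_k$ is continuous. Conversely, if all $T_k$ are continuous, then $T$ agrees with the continuous map $\star_k$ on each closed square $\cint{a_k,b_k}^2$ and with $\min$ off $\bigcup_k\lint{a_k,b_k}^2$, and the two descriptions are consistent on overlaps; for finitely many summands this already gives continuity of $T$, and for infinitely many one argues sequentially at a limit point, using that only finitely many intervals $\opint{a_k,b_k}$ can be longer than any prescribed $\varepsilon$, so a convergent sequence passing through infinitely many distinct blocks is forced onto the diagonal, where $T=\min$. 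I expect this last gluing step in the infinite-index case to be the only genuine obstacle; with the index set equipped with the correct endpoint conventions, the algebraic part of the statement is delivered cleanly by Clifford's theorem.
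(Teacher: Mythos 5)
The paper does not prove this proposition at all: it is recalled as a known result from the literature on triangular norms, with Clifford's ordinal sum theorem stated immediately beforehand precisely as the algebraic engine behind it, and your derivation via Theorem~\ref{thcli} is exactly that intended standard argument. Your verification is correct, including the key points that the shared endpoint $b_k=a_j$ is the neutral element of the left summand and the annihilator of the right one, and that the closed-square description of each block agrees with the half-open one of the statement because $b_k$ acts neutrally; the only step left as a sketch is the continuity gluing when infinitely many blocks accumulate, and the sketch you give (only finitely many blocks exceed any $\varepsilon$ in length, so such a sequence converges to an idempotent point where $T=\min$) is the right one and completes routinely.
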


Each continuous t-norm (t-conorm) is equal to an ordinal sum of continuous Archimedean  t-norms (t-conorms). Note that a continuous t-norm (t-conorm) is Archimedean if and only if it has only trivial idempotent points $0$ and $1.$ A continuous Archimedean t-norm $T$ (t-conorm $C$) is either strict, i.e.,
strictly increasing on $\rint{0,1}^2$ (on $\lint{0,1}^2$), or nilpotent, i.e.,  there exists $(x,y)\in \opint{0,1}^2$ such that $T(x,y)=0$
($C(x,y)=1$). Moreover, each continuous Archimedean t-norm (t-conorm) has a continuous additive generator. More details on t-norms and t-conorms can be found in \cite{AFS06,KMP00}.

A uninorm  (introduced in \cite{YR96}) is a binary function  $U\colon \uint^2 \lran \uint $  which is commutative,
associative, non-decreasing in both variables and have a neutral element $e\in \ouint$ (see also \cite{FYR97}). If we take uninorm in a broader sense, i.e., if for a neutral element we have  $e\in \uint,$ then the class of uninorms covers also the class of t-norms and the class of t-conorms. In order the stress that we assume a uninorm with $e\in \opint{0,1}$ we will call such a uninorm \emph{proper}. For each uninorm the value $U(1,0)\in \{0,1\}$ is the annihilator of $U.$ A uninorm is called \emph{conjunctive} (\emph{disjunctive}) if $U(1,0)=0$ ($U(1,0)=1$). Due to the associativity we can uniquely define $n$-ary form of any uninorm for any $n\in\mathbb{N}$ and therefore in some proofs we will use ternary form instead of binary, where suitable.

For each uninorm $U$ with the  neutral element $e\in \uint,$ the restriction of $U$ to $\cint{0,e}^2$ is a t-norm on $\cint{0,e}^2,$ i.e., a linear transformation of some t-norm $T_U$  on $\uint^2$
and the restriction of $U$ to $\cint{e,1}^2$ is a t-conorm  on $\cint{e,1}^2,$ i.e., a linear transformation of some t-conorm $C_U.$  Moreover, $\min(x,y)\leq U(x,y)\leq \max(x,y)$ for all
$(x,y)\in \cint{0,e}\times \cint{e,1}\cup \cint{e,1}\times \cint{0,e}.$

From any pair of a t-norm and a t-conorm we can construct the minimal and the maximal uninorm with the given
underlying functions.

\begin{proposition}
Let  $T \colon \uint^2 \lran \uint $ be a t-norm and  $C \colon \uint^2 \lran \uint $ a t-conorm and assume $e\in \uint.$ Then the two functions  $U_{\min},U_{\max} \colon \uint^2 \lran \uint $ given by
$$U_{\min}(x,y)=\begin{cases} e\cdot T(\frac{x}{e},\frac{y}{e}) &\text{if $(x,y)\in \cint{0,e}^2,$ } \\
e+ (1-e)\cdot C(\frac{x-e}{1-e},\frac{y-e}{1-e}) &\text{if $(x,y)\in \cint{e,1}^2,$ } \\
\min(x,y) &\text{otherwise}\end{cases}$$ and
$$U_{\max}(x,y)=\begin{cases} e\cdot T(\frac{x}{e},\frac{y}{e}) &\text{if $(x,y)\in \cint{0,e}^2,$ } \\
e+ (1-e)\cdot C(\frac{x-e}{1-e},\frac{y-e}{1-e}) &\text{if $(x,y)\in \cint{e,1}^2,$ } \\
\max(x,y) &\text{otherwise}\end{cases}$$ are uninorms. We will denote the set of all uninorms of the first type by $\mathcal{U}_{\min}$ and of the second type by $\mathcal{U}_{\max}.$
\end{proposition}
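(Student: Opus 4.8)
The plan is to check, for $U_{\min}$, the four defining properties of a uninorm in turn — commutativity, monotonicity in each variable, the neutral element $e$, and associativity — the argument for $U_{\max}$ being completely analogous with $\max$ in place of $\min$. First I would dispose of the boundary values of $e$: for $e=1$ both $U_{\min}$ and $U_{\max}$ reduce to the t-norm $T$ and for $e=0$ both reduce to the t-conorm $C$, so in those cases the claim is just the fact that a t-norm and a t-conorm are (broad-sense) uninorms; from now on assume $e\in\opint{0,1}$. I would also record that $U_{\min}$ is well defined: the three defining regions $\cint{0,e}^2$, $\cint{e,1}^2$ and the off-diagonal remainder overlap only at the single point $(e,e)$, where the first clause gives $e\cdot T(1,1)=e$ and the second gives $e+(1-e)C(0,0)=e$.

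Commutativity is immediate because $T$, $C$, $\min$ are commutative and the partition of $\uint^2$ into the three regions is symmetric under $(x,y)\mapsto(y,x)$. The neutral element is handled by direct computation: if $y\le e$ then $(e,y)\in\cint{0,e}^2$ and $U_{\min}(e,y)=e\cdot T(1,y/e)=y$, and if $y>e$ then $(e,y)\in\cint{e,1}^2$ and $U_{\min}(e,y)=e+(1-e)C(0,(y-e)/(1-e))=y$. For monotonicity in the first variable (hence, by commutativity, in the second) I would fix $y$ and split into $y\le e$ and $y>e$; on each of the intervals $\cint{0,e}$ and $\rint{e,1}$ for the variable $x$, the section $x\mapsto U_{\min}(x,y)$ is non-decreasing, being a rescaled section of $T$, a rescaled section of $C$, or $x\mapsto\min(x,y)$, and the only remaining point is to exclude a downward jump at $x=e$, which follows from the inequalities $e\cdot T(x/e,y/e)\le\min(x,y)$ and $e+(1-e)C\big(\tfrac{x-e}{1-e},\tfrac{y-e}{1-e}\big)\ge\max(x,y)$ together with the neutral-element identity just obtained.

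The substantive part is associativity, $U_{\min}(U_{\min}(x,y),z)=U_{\min}(x,U_{\min}(y,z))$. Since $e$ is the neutral element, if one of $x,y,z$ equals $e$ both sides collapse to $U_{\min}$ of the remaining two, so I may assume each of $x,y,z$ is \emph{low} (in $\lint{0,e}$) or \emph{high} (in $\rint{e,1}$), and commutativity then reduces the essentially distinct cases to the configurations LLL, LLH, LHH, HHH. Two facts organize every case: (i) the t-norm clause maps $\lint{0,e}^2$ into $\lint{0,e}$ and the t-conorm clause maps $\rint{e,1}^2$ into $\rint{e,1}$ — this is exactly where $T(a,b)\le\min(a,b)$ and $C(a,b)\ge\max(a,b)$ enter, in particular guaranteeing that a t-norm output can never re-enter the t-conorm block — and (ii) on the off-diagonal remainder $U_{\min}$ returns the low one of its two arguments whenever one is low and the other high. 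Consequently, when at least one argument is low and at least one high, both bracketings absorb the high arguments and return the rescaled $T$-value of the low arguments; when all three are low (respectively high), both bracketings return the rescaled $T$-value (respectively $C$-value) of all three, which is bracketing-independent by the associativity of $T$ (respectively $C$). With (i)--(ii) in hand each configuration reduces to a short computation.

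I expect the only real obstacle to be bookkeeping rather than anything conceptual: one must keep the half-open boundaries straight — in particular that the images in (i) lie \emph{strictly} on one side of $e$, and that arguments equal to $e$ have been peeled off via the neutral element before the low/high split — and must remember to re-examine the boundary line $x=e$ in the monotonicity step. Once $U_{\min}$ is settled, $U_{\max}$ follows by the dual argument, exchanging $\min$ with $\max$ and the words ``low'' and ``high''.
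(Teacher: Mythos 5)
Your proof is correct. Note that the paper states this proposition without proof --- it is recalled as a known construction from the uninorm literature (Fodor--Yager--Rybalov) --- so there is no argument in the paper to compare against; your direct verification (commutativity, neutral element, monotonicity across the line $x=e$, and the low/high case analysis for associativity organized by the facts that $T\le\min$ keeps t-norm outputs strictly below $e$, $C\ge\max$ keeps t-conorm outputs strictly above $e$, and the off-diagonal clause returns the low argument) is the standard and complete one.
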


\begin{definition}
A uninorm  $U \colon \uint^2 \lran \uint $ is called
\emph{internal} if  $U(x,y)\in \{x,y\}$ for all $(x,y)\in \uint^2.$ Moreover, $U$ is called s-internal if it is internal and there exists a continuous and strictly decreasing function $v_{U}\colon \uint \lran \uint$ such that  $U(x,y)=\min(x,y)$ if $y<v_U(x)$ and  $U(x,y)=\max(x,y)$ if $y>v_U(x).$ Finally, $U$ is called pseudo-internal if $U$ is internal on $\cint{0,e}\times \cint{e,1}\cup \cint{e,1}\times \cint{0,e}.$
\end{definition}

For example all uninorms from $\mathcal{U}_{\min} \cup \mathcal{U}_{\max}$ are pseudo-internal. 

The following easy lemma was shown in \cite{genuni}.

\begin{lemma}
Let $U\colon \uint^2 \lran \uint $  be a uninorm such that $T_U=\min$ and $C_U=\max.$ Then $U$ is internal. \label{lemmm}
\end{lemma}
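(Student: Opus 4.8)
The plan is to prove the stronger-looking but equivalent statement that $U(x,y)\in\{x,y\}$ for every $(x,y)\in\uint^2$. By commutativity it suffices to treat the case $x\le y$. Since $T_U=\min$, the remark recalled above (that $U$ restricted to $\cint{0,e}^2$ is the linear rescaling of $T_U$) shows $U=\min$ on $\cint{0,e}^2$; dually, $C_U=\max$ gives $U=\max$ on $\cint{e,1}^2$. Both of these regions are therefore already internal, so the only case that requires work is the mixed one, $x\le e\le y$. The boundary cases $x=e$ and $y=e$ are immediate from neutrality, so I would assume $x<e<y$ and set $z:=U(x,y)$; the inequality $\min\le U\le\max$ on the mixed region gives $x\le z\le y$.

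The key idea is to collapse $U(x,y)$ using associativity together with the fact that $x$ is idempotent for the left underlying operation and $y$ for the right one. Since $y>e$ we have $U(y,y)=\max(y,y)=y$, so associativity yields
\[
U(z,y)=U\bigl(U(x,y),y\bigr)=U\bigl(x,U(y,y)\bigr)=U(x,y)=z .
\]
Dually, since $x<e$ we have $U(x,x)=\min(x,x)=x$, so associativity yields
\[
U(x,z)=U\bigl(x,U(x,y)\bigr)=U\bigl(U(x,x),y\bigr)=U(x,y)=z .
\]
Thus both $U(z,y)=z$ and $U(x,z)=z$.

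Now I would split on the position of $z$ relative to $e$. If $z\ge e$, then $z,y\in\cint{e,1}$ with $z\le y$, whence $U(z,y)=\max(z,y)=y$; comparing with $U(z,y)=z$ forces $z=y$. If $z<e$, then $x,z\in\cint{0,e}$ with $x\le z$, whence $U(x,z)=\min(x,z)=x$; comparing with $U(x,z)=z$ forces $z=x$. In either case $z\in\{x,y\}$, completing the argument.

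I do not expect a genuine obstacle here; the proof is essentially a two-line associativity manipulation. The only points requiring care are (i) selecting the correct instance of associativity in each collapsing identity — pairing $x$ with $x$ on one side and $y$ with $y$ on the other — and (ii) disposing of the boundary cases $x=e$, $y=e$ first, so that the strict inequalities underpinning ``$U(z,y)=\max(z,y)=y$'' and ``$U(x,z)=\min(x,z)=x$'' are available. A mild subtlety is that the value $z$ is not known a priori to differ from $e$; but the case split above handles $z=e$ for free, since then $U(z,y)=U(e,y)=y\neq e=z$ contradicts $U(z,y)=z$, so in fact $z=e$ cannot occur once $y>e$.
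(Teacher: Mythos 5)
Your proof is correct. Note that the paper itself gives no argument for this lemma --- it is simply quoted from \cite{genuni} --- so there is nothing in the text to compare against; your self-contained derivation (collapse $z=U(x,y)$ via $U(z,y)=z$ and $U(x,z)=z$ using the idempotency of $x$ and $y$ forced by $T_U=\min$ and $C_U=\max$, then locate $z$ relative to $e$) is the standard one and all the steps, including the boundary and $z=e$ cases, check out.
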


For a given uninorm $U\colon \uint^2\lran \uint$ and  each $x\in \uint$ we define a function $u_x\colon \uint\lran \uint$ by $u_x(z)=U(x,z)$ for $z\in \uint.$

Similarly as in the case of t-norms and t-conorms we can construct uninorms using additive generators (see  \cite{FYR97}).

\begin{proposition}
Let $f\colon \uint\lran\cint{-\infty,\infty},$  $f(0)=-\infty,$ $f(1)=\infty$ be a continuous strictly increasing function.
Then a binary function $U \colon \uint^2 \lran \uint $ given by
$$U(x,y)=f^{-1}(f(x)+f(y)),$$ where $f^{-1}\colon \cint{-\infty,\infty}\lran \uint$ is an inverse function to $f,$ is a uninorm, which will be called a \emph{representable} uninorm.
\end{proposition}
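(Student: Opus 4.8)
The plan is to exhibit $f$ as an order isomorphism between $(\uint,U)$ and the extended real line under addition, and then transport the (easily verified) monoid structure of the latter back to $\uint$. The one genuine subtlety is that $f(x)+f(y)$ is the indeterminate form $-\infty+\infty$ exactly when $\{x,y\}=\{0,1\}$, so I would first fix a convention, say $(-\infty)+(+\infty)=(+\infty)+(-\infty)=-\infty$ (the choice $+\infty$ works symmetrically and yields the disjunctive variant). With this convention $f(x)+f(y)$ is defined for every $(x,y)\in\uint^2$, so that $U$ is a genuine binary operation on $\uint$.

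Next I would record the structure of $(\cint{-\infty,\infty},+)$ with the above convention: it is commutative with neutral element $0$; a short case check over the finitely many triples that involve both $+\infty$ and $-\infty$ shows associativity; and a similar finite case check shows that $s\le s'$ implies $s+t\le s'+t$ in each argument. This is the only place where the $-\infty+\infty$ convention must actually be handled, and it is entirely routine — but doing it once here is precisely what makes the rest go through without rechecking associativity by hand on the boundary.

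Then I would note that a continuous strictly increasing $f\colon\uint\lran\cint{-\infty,\infty}$ with $f(0)=-\infty$ and $f(1)=\infty$ is, by the intermediate value theorem, a bijection, so $f^{-1}\colon\cint{-\infty,\infty}\lran\uint$ is a well-defined strictly increasing bijection with $f\circ f^{-1}=\mathrm{id}$ and $f^{-1}\circ f=\mathrm{id}$. The uninorm axioms now follow mechanically: commutativity of $U$ from commutativity of $+$; associativity from $U(U(x,y),z)=f^{-1}\bigl((f(x)+f(y))+f(z)\bigr)=f^{-1}\bigl(f(x)+(f(y)+f(z))\bigr)=U(x,U(y,z))$, where $f\circ f^{-1}=\mathrm{id}$ is used to collapse the inner composition and associativity of $+$ is applied inside; monotonicity from the fact that $f$, addition (in each argument), and $f^{-1}$ are all non-decreasing; and the neutral element is $e=f^{-1}(0)$, which lies in $\ouint$ because $f(0)=-\infty\neq 0\neq\infty=f(1)$, since $U(x,e)=f^{-1}(f(x)+0)=f^{-1}(f(x))=x$. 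Finally I would observe that the annihilator is $U(1,0)=f^{-1}(-\infty)=0$, so the convention chosen above gives a conjunctive proper uninorm, while the symmetric convention gives a disjunctive one; in either case $U$ is a uninorm. The main obstacle is not depth but bookkeeping: ensuring the single indeterminate value $f(0)+f(1)$ is resolved at the outset, which is exactly what isolating the ordered monoid $(\cint{-\infty,\infty},+)$ accomplishes.
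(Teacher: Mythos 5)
Your proof is correct. The paper states this proposition without proof, recalling it from the literature (Fodor--Yager--Rybalov), and your transport-of-structure argument is the standard one: you correctly isolate the only delicate point, namely fixing a convention for $f(0)+f(1)=-\infty+\infty$, verify that the extended real line with that convention is a commutative ordered monoid with neutral element $0$, and then pull everything back through the order isomorphism $f$. Your closing observation that the two possible conventions yield the conjunctive and disjunctive variants respectively is consistent with the paper's remark that $U(1,0)\in\{0,1\}$ is the annihilator.
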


Note that if we relax the monotonicity of the additive generator then the neutral element will be lost and by relaxing the condition  $f(0)=-\infty,$ $f(1)=\infty$ the associativity will be lost (if $f(0)<0$ and $f(1)>0$). In  \cite{uniru} (see also \cite{jacon}) we can find the following result.

\begin{proposition}
Let $U\colon \uint^2 \lran \uint$ be a uninorm continuous everywhere on the unit square except of the two points $(0,1)$ and $(1,0).$ Then $U$ is representable.
\label{prouni}
\end{proposition}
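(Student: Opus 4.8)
The plan is to extract, from the single genuine discontinuity, enough rigidity that $U$ becomes cancellative on the open square, and then to quote the classical representation of continuous cancellative commutative monoids on an interval (the same mechanism that yields additive generators of strict t-norms and of representable uninorms). I would begin with the reductions. Since $U(1,0)\in\{0,1\}$ and since $(x,y)\mapsto 1-U(1-x,1-y)$ is again a uninorm which is continuous off $\{(0,1),(1,0)\}$ and representable exactly when $U$ is, I may assume $U$ conjunctive, $U(1,0)=0$; then $U(\cdot,0)\equiv0$ (because $U(x,0)\le U(1,0)=0$) and $U(z,1)\ge U(e,1)=1$, hence $U(z,1)=1$, for $z\ge e$. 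Moreover $e\in\ouint$, since an improper $U$ would be a t-norm or a t-conorm and hence continuous at $(0,1)$ and $(1,0)$. Note that for each fixed $x\in\ouint$ the one-variable sections $u_x$ and $U(\cdot,x)$ are continuous on all of $\uint$, none of the points $(x,0),(x,1),(0,x),(1,x)$ being exceptional.

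The crux -- the step I expect to require the most care -- is to prove $U(x,1)=1$ for every $x\in\ouint$. First, the discontinuity at $(0,1)$ forces $M:=\inf_{x>0}U(x,1)>0$: if $M$ were $0$, then from $0\le U(x,y)\le U(x,1)$ we would get $U(x,y)\to 0=U(0,1)$ as $(x,y)\to(0,1)$, i.e.\ continuity at $(0,1)$. Now suppose $b_0:=U(x_0,1)<1$ for some $x_0\in\ouint$. Then $U(b_0,1)=U\bigl(x_0,U(1,1)\bigr)=U(x_0,1)=b_0$, and since $U(z,1)=1$ for $z\ge e$ this forces $b_0<e$; from $b_0=U(x_0,1)\ge U(x_0,e)$ it then follows that $x_0\le e$, whence $b_0\ge U(x_0,e)=x_0>0$. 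Associativity now propagates the relation $U(b_0,1)=b_0$: for every $z$, $U\bigl(U(z,b_0),1\bigr)=U\bigl(z,U(b_0,1)\bigr)=U(z,b_0)$, so $U(w,1)=w$ for all $w$ in the range of the continuous map $U(\cdot,b_0)$, which is an interval with infimum $U(0,b_0)=0$. Hence $\inf_{w>0}U(w,1)=0$, contradicting $M>0$; thus $U(x,1)=1$ on $\ouint$.

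Granting this, strict monotonicity of $U$ on $\ouint^2$ is short. If $U(x,y)=U(x,y')$ for some $x,y,y'\in\ouint$ with $y<y'$, then applying $U(z,\cdot)$ to both sides and using associativity yields $U\bigl(U(z,x),y\bigr)=U\bigl(U(z,x),y'\bigr)$ for every $z\in\ouint$; thus $u_w(y)=u_w(y')$ for all $w$ in the range $R$ of $U(\cdot,x)$. But $R$ is an interval whose closure is $[\,U(0,x),U(1,x)\,]=[0,1]$ -- here the previous step $U(1,x)=1$ is decisive -- so $e\in R$, giving $u_e(y)=u_e(y')$, i.e.\ $y=y'$, a contradiction. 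Hence $U$ is strictly increasing in each variable on $\ouint^2$; in particular $U(x,y)>U(x/2,y)\ge 0$ and $U(x,y)<U\bigl(\tfrac{x+1}{2},y\bigr)\le 1$, so $U$ maps $\ouint^2$ into $\ouint$, and $U|_{\ouint^2}$ is a continuous, commutative, associative, cancellative operation on the open interval $\ouint$ with neutral element $e$.

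It then remains to apply the structure theorem: a continuous cancellative commutative operation on an open real interval with an interior neutral element is conjugate to addition, i.e.\ $U(x,y)=\phi^{-1}\bigl(\phi(x)+\phi(y)\bigr)$ on $\ouint^2$ for some increasing homeomorphism $\phi\colon\ouint\lran\mathbb R$ with $\phi(e)=0$; the range is all of $\mathbb R$ because an interval containing $0$ and closed under addition must be $\mathbb R$. Setting $f(0)=-\infty$, $f(1)=\infty$ and $f=\phi$ on $\ouint$ gives a continuous strictly increasing $f\colon\uint\lran\cint{-\infty,\infty}$, so $U^{*}(x,y):=f^{-1}(f(x)+f(y))$ is a representable uninorm; it agrees with $U$ on $\ouint^2$, and on the boundary $U^{*}(0,y)=0=U(0,y)$ and $U^{*}(1,y)=1=U(1,y)$ for $y\in\ouint$ (using conjunctivity and the second paragraph), $U^{*}(0,0)=0=U(0,0)$, $U^{*}(1,1)=1=U(1,1)$, while the value at $(0,1)=(1,0)$ is not fixed by the formula and may be taken equal to the annihilator of $U$. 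Therefore $U=U^{*}$ is representable. Besides the bootstrap $U(\cdot,1)\equiv1$ on $\ouint$, the only point needing attention is to check that the cited structure theorem applies verbatim on the open interval $\ouint$ (where, unlike for a t-norm, the operation is total with values in the interior); everything else is routine.
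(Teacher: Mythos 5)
Your proposal is essentially correct, but there is nothing in the paper to compare it with: Proposition~\ref{prouni} is imported without proof from the literature (the text says it can be found in \cite{uniru}, see also \cite{jacon}). What you have written is, in substance, the standard argument for this representation theorem: pin down the boundary sections ($U(\cdot,0)\equiv 0$ and $U(x,1)=1$ for all $x\in\ouint$ in the conjunctive case), deduce cancellativity on $\ouint^2$ from the fact that each inner section $U(\cdot,x)$ is continuous with range all of $\uint$ (so that the neutral element lies in the range and cancellation can be tested there), and then invoke Acz\'el's representation of continuous, associative, cancellative operations on an interval. Your bootstrap for $U(x,1)=1$ via the fixed point $U(b_0,1)=b_0$ and the resulting collapse of $\inf_{x>0}U(x,1)$ to $0$ is the right mechanism and is carried out correctly. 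Two small points should be tightened. First, the step ``the discontinuity at $(0,1)$ forces $M>0$'' presupposes that $U$ actually is discontinuous there; if the hypothesis is read merely as ``continuous on $\uint^2\setminus\{(0,1),(1,0)\}$,'' add the one-line observation that no proper uninorm is continuous on all of $\uint^2$: if $M=0$ then $U(\cdot,1)$ is continuous on $\ruint$ with infimum $0$ and value $1$ at $e$, so the intermediate value theorem gives $x_0$ with $U(x_0,1)=e$, whence $e=U(x_0,1)=U(x_0,1,1)=U(e,1)=1$, a contradiction. Second, the appeal to the structure theorem needs a precise reference valid on the \emph{open} interval; alternatively, note that $(\ouint,U)$ is a connected topological group (inverses exist because $U(x,0)=0<e<1=U(x,1)$ and $U(x,\cdot)$ is continuous), hence isomorphic to $(\mathbb{R},+)$, which yields the generator $\phi$ directly. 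With either justification, the extension to $f$ with $f(0)=-\infty$, $f(1)=\infty$ and the boundary verification are routine, as you say.
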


Thus a uninorm $U$ is representable if and only if it is continuous on $\uint^2\setminus \{(0,1),(1,0)\},$ which
 completely characterizes the set of representable uninorms.

 \begin{definition}
We will denote the set of all uninorms $U$ such that $T_U$ and $C_U$ are continuous by $\mathcal{U},$ and the set of all uninorms $V$ such that
$V(x,0)=0$ for all $x\in \lint{0,1}$  and $V(x,1)=1$ for all $x\in \rint{0,1}$  by $\mathcal{N}.$ Further, we will denote by $\mathcal{N}_{\max}$ ($\mathcal{N}_{\min}$) the set of all uninorms $U\in \mathcal{N}$ such that there exists a uninorm $U_1\in \mathcal{U}_{\max}$ ($U_1\in \mathcal{U}_{\min}$) such that
$U=U_1$ on $\opint{0,1}^2.$
\end{definition}

An ordinal sum of uninorms was introduced in \cite{genuni}.
For any $0\leq a\leq b<c\leq d\leq 1,$  $v\in \cint{b,c},$ and a uninorm $U$  with the neutral element $e\in \uint$ we will use a  transformation $f\colon \uint \lran \lint{a,b} \cup \{v\} \cup \rint{c,d}$
 given by
\begin{equation}f(x)=\begin{cases}  (b-a)\cdot \frac{x}{e} + a  &\text{if $x\in \lint{0,e} ,$ } \\
v  &\text{if $x=e ,$ } \\
d - \frac{(1-x)(d-c)}{(1-e)} &\text{otherwise.} \end{cases}\label{utra}\end{equation}
Then $f$  is linear on $\lint{0,e}$ and on $\rint{e,1}$ and thus it is a
piece-wise linear isomorphism of $\uint$ to $(\lint{a,b} \cup \{v\} \cup \rint{c,d})$  and a binary function
$U^{a,b,c,d}_v\colon (\lint{a,b} \cup \{v\} \cup \rint{c,d})^2 \lran (\lint{a,b} \cup \{v\} \cup \rint{c,d})$ given by
\begin{equation}U^{a,b,c,d}_v(x,y)=f(U(f^{-1}(x),f^{-1}(y)))\label{unitr}\end{equation} is a uninorm on $(\lint{a,b} \cup \{v\} \cup \rint{c,d})^2.$ Note that in the case
when $a=b$ ($c=d$) we will transform only the part of the uninorm $U$ which is defined on  $\cint{e,1}^2$ ($\cint{0,e}^2$).
The function $f$ is piece-wise linear, however, more generally we can use any increasing isomorphic transformation.

\begin{remark}
If $U_1$ and $U_2$ are uninorms, with respective neutral elements $e_1,e_2,$ then for $0\leq a<  b<c< d \leq 1$ we have $$(U_1)^{a,b,c,d}_v(x,y)=(U_2)^{a,b,c,d}_v(x,y)$$
if and only if $U_1(x,y)=\phi^{-1}(U_2(\phi(x),\phi(y))),$ where $\phi\colon \uint\lran \uint$ is a strictly increasing isomorphism with $\phi(e_1)=e_2$
which is linear on $\cint{0,e_1}$ and on $\cint{e_1,1}.$ Similar result can be obtain for the case when  $a=b$ ($c=d$), however, then only the corresponding parts of uninorms are isomorphic.
\end{remark}

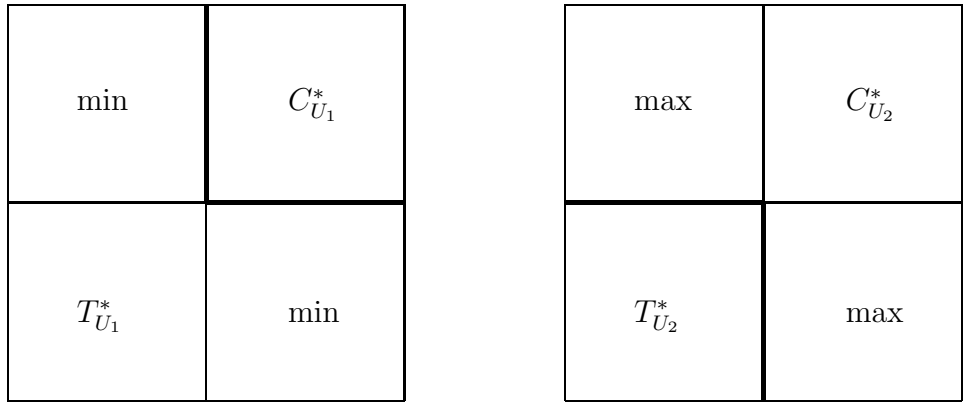
\begin{figure}[b]  \hskip2cm
\begin{picture}(150,150)
\put(0,0){ \line(1,0){150} }
\put(0,0){ \line(0,1){150} }
\put(150,150){ \line(-1,0){150} }
\put(150,150){ \line(0,-1){150} }
\put(0,75){ \line(1,0){150} }
\put(75,0){ \line(0,1){150} }
\linethickness{0.5mm}
\put(75,75){ \line(1,0){75} }
\put(75,75){ \line(0,1){75} }

\put(30,30){$T_{U_1}^*$}
\put(110,110){$C_{U_1}^*$}
\put(110,30){$\min$}
\put(30,110){$\min$}
\end{picture}
\hskip2cm
\begin{picture}(150,150)
\put(0,0){ \line(1,0){150} }
\put(0,0){ \line(0,1){150} }
\put(150,150){ \line(-1,0){150} }
\put(150,150){ \line(0,-1){150} }
\put(0,75){ \line(1,0){150} }
\put(75,0){ \line(0,1){150} }
\linethickness{0.5mm}
\put(0,75){ \line(1,0){75} }
\put(75,0){ \line(0,1){75} }
\put(30,30){$T_{U_2}^*$}
\put(110,110){$C_{U_2}^*$}
\put(110,30){$\max$}
\put(30,110){$\max$}
\end{picture}
\caption{The uninorm $U_1$ (left) and the uninorm $U_2$ (right) from Example \ref{exmm}.  The bold lines denote the points of discontinuity of $U_1$ and $U_2.$} \label{figst1}
\end{figure}

\begin{proposition}
Assume $e\in \uint.$
Let $K$ be an index set which is finite or countably infinite and let $(\opint{a_k,b_k})_{k\in K}$   be a  disjoint system of open subintervals (which can be also empty) of $\cint{0,e},$ such that $\bigcup_{k\in K}\cint{a_k,b_k}=\cint{0,e}.$ Similarly, let
 $(\opint{c_k,d_k})_{k\in K}$ be a disjoint system of open subintervals (which can be also empty) of $\cint{e,1},$ such that $\bigcup_{k\in K}\cint{c_k,d_k}=\cint{e,1}.$ Let further these two systems be anti-comonotone, i.e., $b_k\leq a_i$ if and only if  $c_k \geq d_i$
  for all $i,k\in K.$
  Assume
a family of   uninorms  $(U_k)_{k\in K}$ on $\uint^2$ such that if both  $\opint{a_k,b_k}$ and $\opint{c_k,d_k}$ are non-empty then $U_k$ is a proper uninorm, if $\opint{a_k,b_k}$ is non-empty $U_k$ is either a t-norm or a proper uninorm and  if $\opint{c_k,d_k}$ is non-empty then $U_k$ is either a t-conorm or a proper uninorm, and finally if both $\opint{a_k,b_k}$ and $\opint{c_k,d_k}$ are empty
then $a_k=b_k=a_{k_1}=b_{k_1}$ and $c_k=d_k=c_{k_1}=d_{k_1}$ does not hold for any $k_1\in K,$ $k\neq k_1,$ and here
 only the value $U_k(0,1)$ is interesting.
 Denote $B=\{b_k\mid k\in K\}\setminus \{a_k\mid k\in K\}$ and $C=\{c_k\mid k\in K\}\setminus \{d_k\mid k\in K\}.$ We define a function
  $n\colon B\lran B\cup C$ given for all $b_k\in B$ by
  $$n(b_k)=\begin{cases} b_k &\text{if $U_k(1,0)=0,$} \\ c_k &\text{else.} \end{cases}$$
Let the ordinal sum $U^e=(\langle a_k,b_k,c_k,d_k,U_k \rangle \mid k\in K)^e$ be given by
$$U^e(x,y)=\begin{cases}
y &\text{if  $x=e,$  }\\
x &\text{if  $y=e,$  }\\
(U_k)^{a_k,b_k,c_k,d_k}_{v_k} &\text{if  $(x,y)\in (\lint{a_k,b_k}\cup \rint{c_k,d_k})^2,$ }\\
x &\text{if $y\in \cint{b_k,c_k}, x\in \cint{a_k,d_k}\setminus\cint{b_k,c_k},$} \\
y &\text{if $x\in \cint{b_k,c_k}, y\in \cint{a_k,d_k}\setminus\cint{b_k,c_k},$} \\
\min(x,y) &\text{if $(x,y)\in\cint{b_k,c_k}^2\setminus (\opint{b_k,c_k}^2\cup \{(b_k,c_k),(c_k,b_k)\}),$}\\
&\text{ where $b_k\in B,$ $c_k\in C,$ $x+y<c_k+b_k,$}\\
\max(x,y) &\text{if $(x,y)\in\cint{b_k,c_k}^2\setminus (\opint{b_k,c_k}^2\cup \{(b_k,c_k),(c_k,b_k)\}),$}\\
&\text{ where $b_k\in B,$ $c_k\in C,$ $x+y>c_k+b_k,$}\\
n(b_k)  &\text{if $(x,y)=(b_k,c_k)$ or  $(x,y)=(c_k,b_k),$ $b_k\in B,$ $c_k\in C,$} \\
\min(x,y)  &\text{if $(x,y)\in \{b_k\}\times\cint{b_k,c_k} \cup \cint{b_k,c_k} \times \{b_k\} $ and $b_k\in B, c_k\notin C,$} \\
\max(x,y)  &\text{if $(x,y)\in \{c_k\}\times\cint{b_k,c_k} \cup \cint{b_k,c_k} \times \{c_k\} $ and $b_k\notin B, c_k\in C,$}
\end{cases}$$ where $v_k=c_k$ ($v_k=b_k$) if there exists an $i\in K$ such that $b_k=a_i$ and $U_i$ is disjunctive (conjunctive)
and $v_k=n(b_k)$ if $b_k\in B,c_k\in C,$ $v_k=b_k$ if $b_k\in B,c_k\notin C,$ $v_k=c_k$ if $b_k\notin B,c_k\in C,$ and
$(U_k)^{a_k,b_k,c_k,d_k}_{v_{k}}$ is given by the formula \eqref{unitr}.
Then $U^e$ is a uninorm. \label{proorduni}
\end{proposition}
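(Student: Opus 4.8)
The plan is to check that $U^e$ is well defined and then to verify the four uninorm axioms: commutativity, the neutral element $e$, monotonicity, and associativity, with the last two carrying essentially all of the work.

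Well-definedness amounts to checking that wherever two clauses of the defining formula can simultaneously apply — the junction points $b_k=a_i$ and $c_k=d_i$, the corners $(b_k,c_k)$ and $(c_k,b_k)$, and the diagonal $x=e$ or $y=e$ — they return the same value; this is exactly where the definitions of $n$ and of the values $v_k$ enter, since e.g.\ at $(b_k,c_k)$ the t-norm side of $(U_k)^{a_k,b_k,c_k,d_k}_{v_k}$, its t-conorm side, and the explicit value $n(b_k)$ must all coincide. Commutativity is then immediate: every clause is paired with its mirror under the exchange $x\leftrightarrow y$, and each $(U_k)^{a_k,b_k,c_k,d_k}_{v_k}$ is commutative, being the image of the commutative $U_k$ under the bijection $f$ of \eqref{utra}. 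That $e$ is a neutral element is the content of the first two clauses, together with the observation that the restriction of $U^e$ to $\cint{0,e}^2$ (resp.\ $\cint{e,1}^2$) is obtained by gluing the t-norm parts (resp.\ t-conorm parts) of the $(U_k)^{a_k,b_k,c_k,d_k}_{v_k}$ along the points $b_k$ (resp.\ $c_k$), and $f$ sends $e$ to such a point.

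For monotonicity I would fix $z\in\uint$ and show $x\mapsto U^e(x,z)$ is non-decreasing. Inside a region $(\lint{a_k,b_k}\cup\rint{c_k,d_k})^2$ this is monotonicity of $U_k$; on the $\min$/$\max$ cells it is clear; on the mixed cells the map is the identity or a constant. The real point is that no downward jump occurs as $x$ crosses a boundary between regions, i.e.\ at the points $a_k,b_k,c_k,d_k$ and at $e$. Here the anti-comonotonicity hypothesis is essential: it forces the ``gaps'' $\cint{b_k,c_k}$ to be linearly ordered by inclusion, so the blocks are nested, and, combined with $\min(x,y)\le U^e(x,y)\le\max(x,y)$ on the off-diagonal cells, it guarantees that consecutive pieces meet either continuously or with an upward jump.

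The main obstacle is associativity. The governing principle is the ordinal-sum-of-semigroups philosophy of Theorem \ref{thcli}: the unit interval is organised into nested blocks, block $k$ carrying a transported copy of the semigroup $U_k$ on $\lint{a_k,b_k}\cup\{v_k\}\cup\rint{c_k,d_k}$; consecutive blocks share a junction point ($b_k$ or $c_k$, as selected by $n$) which is neutral for the inner block and annihilating for the outer one; and an element lying in the gap $\cint{b_k,c_k}$ but outside every block it contains behaves towards block $k$ as $\min$ below the ``antidiagonal'' and as $\max$ above it, i.e.\ it projects onto the block-$k$ coordinate. I would therefore prove $U^e(U^e(x,y),z)=U^e(x,U^e(y,z))$ by a case analysis on the blocks containing $x,y,z$: when a single block contains all three, unwind $f$ and invoke associativity of $U_k$; otherwise show that both sides reduce to the value dictated by the outermost of the blocks involved, using associativity of $\min$ and of $\max$ and the monotone gluing established above. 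The heaviest sub-cases are those in which a junction point $b_k$, $c_k$ or a value $n(b_k)$ is among the three arguments, and those in which $e$ itself appears; there one must check that the ternary value is genuinely unambiguous — which is exactly what the definitions of $n$ and of the $v_k$ were designed to ensure — and I expect this finite but intricate boundary bookkeeping to be the technically decisive part of the proof.
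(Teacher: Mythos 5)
A preliminary remark: the paper never proves this proposition itself --- it is imported from \cite{genuni} --- so the only internal point of comparison is the Appendix proof of Proposition \ref{proext}, which takes the present statement for granted and follows exactly the four-part outline you propose (commutativity, neutral element, monotonicity of the sections, associativity by case analysis). Your well-definedness, commutativity, neutrality and monotonicity sketches are correct, including the key observation that anti-comonotonicity nests the blocks.

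The difficulty lies in the structural fact on which your associativity argument rests. You assert that the junction point shared by consecutive blocks is ``neutral for the inner block and annihilating for the outer one.'' It is the other way around: if $b_k=a_i$ with $U_i$ conjunctive, then $v_k=b_k$ is by construction the neutral element of the \emph{outer} summand $(U_k)^{a_k,b_k,c_k,d_k}_{v_k}$, while $a_i=b_k$ is the annihilator of the \emph{inner} summand $(U_i)^{a_i,b_i,c_i,d_i}_{v_i}$. (Check this on $U_{\min}=(\langle e,e,e,1,C\rangle,\langle 0,e,1,1,T\rangle)^{e}$ from Example \ref{exmm}: the shared point $1$ is the neutral element of the t-norm part living on $\lint{0,e}\cup\{1\}$ and the annihilator of the t-conorm part on $\rint{e,1}$.) In Theorem \ref{thcli} the shared point is neutral for the smaller-indexed semigroup and it is the smaller-indexed element that absorbs; since, as you correctly state two lines later, it is the \emph{outer} coordinate that survives in the mixed cells, the outer blocks must carry the smaller index and hence own the neutral element at the junction. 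As written, your two structural claims contradict each other, and the neutral/annihilator assignment is precisely what you would use to close the mixed-block associativity cases (e.g.\ to see that $U^e(x,y)=x$, not $y$, for $x$ in the outer support and $y$ in the gap), so the planned case analysis would not go through until this orientation is corrected. Two further points remain open: the cases involving the accumulation points $b_k\in B$, $c_k\in C$ and the corner value $n(b_k)$ are most economically dispatched by Corollary \ref{lemass} (any triple on which the operation is internal is automatically associative), which you do not invoke; and the ``finite but intricate bookkeeping'' you defer is where essentially all of the content of the proof lies, so what you have is a credible outline rather than a proof.
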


The above ordinal sum is an ordinal sum in the sense of Clifford \cite{cli},
where each semigroup is defined on $\lint{a_k,b_k} \cup \{U^e(b_k,c_k)\} \cup\rint{c_k,d_k},$ $k\in K$ and on
$\{f_l\},$ $\{g_l\},$
 where $l\in L$ and $f_l\in B,$ $g_l\in C$ for all $l\in L.$ Note that the set $B$ ($C$) is the set of accumulation points of the set
 $\{b_k\mid k\in K\}$ ($\{c_k\mid k\in K\}$) which are not contained in $\{a_k\mid k\in K\}$ ($\{d_k\mid k\in K\}$).

Although in the case of t-norms (t-conorms) the ordinal sum is simply equal to $\min$ ($\max$) on $\{b\}\times\uint \cup \{c\}\times\uint$
for $b\in B, c\in C,$ in the case of uninorms we have both possibilities $\min$  and $\max$ on the area
$\cint{0,e}\times \cint{e,1} $ and therefore the construction is a small bit more complicated here.


In the case that we assume an ordinal sum of uninorms such that $\bigcup_{k\in K}\cint{a_k,b_k}\neq\cint{0,e}$
($\bigcup_{k\in K}\cint{c_k,d_k}\neq\cint{e,1}$) this can be given by the above ordinal sum, where the missing summands are covered by internal uninorms.

\begin{definition}
\begin{mylist}
\item  If for a summand $\langle a_k,b_k,c_k,d_k,U_k \rangle$ for some $k\in K$ we have
$a_k=b_k$ and $c_k=d_k$  we say that this summand is \emph{empty}.
\item   If for a summand $\langle a_k,b_k,c_k,d_k,U_k \rangle$ for some $k\in K$ we have $a_k\neq b_k$ and $c_k\neq d_k$ we say that this summand is \emph{complete}.
\item We say that an ordinal sum $U^e=(\langle a_k,b_k,c_k,d_k,U_k \rangle \mid k\in K)^e$ is  \emph{complete} when all its summands are
complete.
\item We say that a summand $\langle a_k,b_k,c_k,d_k,U_k \rangle$ for some $k\in K$ is \emph{totally employed} if one of the following conditions is satisfied:
    \begin{itemize}
    \item $U_k$ is a proper uninorm and the summand is complete,
    \item $U_k$ is a t-norm and $a_k\neq b_k,$ $c_k=d_k,$
    \item $U_k$ is a t-conorm and $a_k = b_k,$ $c_k\neq d_k.$
    \end{itemize}
\end{mylist}
\end{definition}

\begin{example}
Assume $U_1 \in \mathcal{U}_{\min}$ and $U_2\in \mathcal{U}_{\max}$ with respective neutral elements $e_1,e_2.$
Then $U_1$ and $U_2$ are ordinal sums of uninorms,
$U_1=(\langle e_1,e_1,e_1,1,C_{U_1}\rangle,\langle 0,e_1,1,1,T_{U_1}\rangle)^{e_1}$  and $U_2=(\langle 0,e_2,e_2,e_2,T_{U_2}\rangle,\langle 0,0,e_2,1,C_{U_2}\rangle)^{e_2}.$ If we denote $U_k^* = (U_k)^{a_k,b_k,c_k,d_k}_{v_{k}}$ for all $k\in K$ in the respective ordinal sum we can see uninorms $U_1$ and $U_2$ on Figure \ref{figst1}.  \label{exmm}
\end{example}

We will continue by discussing several peculiarities that appears in the ordinal sum construction.

\section{Notes on ordinal sum of uninorms}

Our aim in this paper is to express each uninorm, such that $T_U$ and $C_U$ are continuous, as ordinal sum of Archimedean uninorms, i.e., such where underlying t-norm and t-conorm are Archimedean.
  First we recall two results from
\cite{repord} (compare also \cite{FB12}).

\begin{proposition}
Assume a uninorm $U\colon \uint^2\lran \uint,$ $U\in \mathcal{U}\cap \mathcal{N}.$
If $T_U$ and $C_U$ are Archimedean then either $U$  is a representable uninorm or $U\in \mathcal{N}_{\min}\cup \mathcal{N}_{\max}$.  \label{sumstrp}
\end{proposition}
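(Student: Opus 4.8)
The plan is to analyze the underlying functions of $U$ together with the behaviour of $U$ on the off-diagonal blocks $\cint{0,e}\times\cint{e,1}$ and $\cint{e,1}\times\cint{0,e}$, using the membership $U\in\mathcal{N}$ to pin down the values on the boundary. Since $U\in\mathcal{U}$ and $T_U,C_U$ are moreover assumed Archimedean, there are four cases according to whether each of $T_U$ and $C_U$ is $\min$/$\max$ or a genuine (strict or nilpotent) Archimedean operation. If $T_U=\min$ and $C_U=\max$, Lemma \ref{lemmm} forces $U$ to be internal; combined with $U\in\mathcal{N}$ (so $U(x,0)=0$ for $x<1$, hence on $\cint{0,e}\times\{0\}$ the value is $0=\min$, and similarly $U(x,1)=1$ for $x>0$), one checks the pseudo-internal pattern is exactly that of a $U_{\min}$ on one triangle and a $U_{\max}$ on the other — but the key point is that $U\in\mathcal{N}$ with $e$ interior and both underlying functions trivial is exactly $\mathcal{N}_{\min}$ or $\mathcal{N}_{\max}$. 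If exactly one of $T_U,C_U$ is trivial, say $C_U=\max$ but $T_U$ Archimedean (and dually), I would argue that the $\mathcal{N}$-boundary conditions plus associativity propagate $\max$ behaviour across the upper-right region and $\min$-or-$\max$ behaviour across the off-diagonal blocks in a way that again lands in $\mathcal{N}_{\min}\cup\mathcal{N}_{\max}$; here the fact that $U(x,1)=1$ for all $x>0$ kills any competing possibility on $\cint{e,1}^2$ having idempotents beyond $e,1$.

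The substantive case is when both $T_U$ and $C_U$ are genuinely Archimedean (strict or nilpotent). Here I would show $U$ is representable by invoking Proposition \ref{prouni}: it suffices to prove $U$ is continuous on $\uint^2\setminus\{(0,1),(1,0)\}$. Continuity on $\cint{0,e}^2$ and on $\cint{e,1}^2$ is given by the continuity of the Archimedean $T_U,C_U$. On the off-diagonal blocks, first observe that $U\in\mathcal{N}$ means $U(x,0)=0$ for $x<1$ and $U(x,1)=1$ for $x>0$; the crucial step is to show that on $\cint{0,e}\times\cint{e,1}$ the uninorm takes values strictly between (or equal to the endpoints only on the boundary lines $x=0$, $y=1$), and is continuous there. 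This I would do by a generator argument: since $T_U$ is Archimedean it has a continuous additive generator $t$ with $t(0)\in\opint{0,\infty}$ or $t(0)=\infty$; likewise $C_U$ has generator $c$. Using associativity, the value $U(x,y)$ for $x\in\opint{0,e}$, $y\in\opint{e,1}$ is determined by iterating $u_x$, and one shows the only way to avoid a discontinuity on the open off-diagonal block — given that neither underlying operation has a nontrivial idempotent to "absorb" the mismatch — is that the two generators glue into a single continuous strictly increasing $f\colon\uint\to\cint{-\infty,\infty}$ with $f(0)=-\infty$, $f(1)=\infty$, after which $U=f^{-1}(f(x)+f(y))$ on all of $\uint^2\setminus\{(0,1),(1,0)\}$. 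Then Proposition \ref{prouni} gives representability.

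The main obstacle I expect is precisely ruling out discontinuities on the open off-diagonal blocks in the doubly-Archimedean case, i.e.\ showing that a "jump" (an $\mathcal{U}_{\min}$-style $\min$ region meeting an $\mathcal{U}_{\max}$-style $\max$ region, or a horizontal/vertical discontinuity curve) cannot occur when both $T_U$ and $C_U$ lack interior idempotents; this is where the results of \cite{mulfun} on the set of discontinuity points, and the Archimedean hypothesis, must be combined carefully — a discontinuity point of $U$ in the interior would have to "sit above" an idempotent of an underlying operation, which Archimedeanicity forbids except at $0,1,e$. A secondary technical point is the bookkeeping in the mixed cases to confirm that the resulting uninorm genuinely lies in $\mathcal{N}_{\min}$ or $\mathcal{N}_{\max}$ rather than merely being pseudo-internal; this follows once one notes that $U\in\mathcal{N}$ forces the off-diagonal behaviour to agree on all of $\opint{0,1}^2$ with some $U_1\in\mathcal{U}_{\min}$ (resp. $\mathcal{U}_{\max}$) built from the same $T_U,C_U$ and $e$.
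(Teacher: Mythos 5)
Your case division is set up along the wrong axis, and this derails the proof. First, the cases in which $T_U=\min$ or $C_U=\max$ are vacuous: by the paper's own characterization, a continuous t-norm (t-conorm) is Archimedean iff its only idempotents are $0$ and $1$, so $\min$ and $\max$ are excluded by hypothesis and Lemma \ref{lemmm} plays no role here. Much more seriously, in the only case that actually occurs --- both $T_U$ and $C_U$ genuinely Archimedean (strict or nilpotent) --- you set out to prove that $U$ must be representable. That is false, and it contradicts the very disjunction you are trying to establish: $\mathcal{N}_{\min}$ contains uninorms whose underlying operations are both strict (take $U_1\in\mathcal{U}_{\min}$ with strict $T_{U_1},C_{U_1}$ and modify the boundary so that $U(x,0)=0$ for $x<1$ and $U(x,1)=1$ for $x>0$). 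Such a $U$ lies in $\mathcal{U}\cap\mathcal{N}$, has Archimedean underlying operations, and is discontinuous along $\{e\}\times\rint{e,1},$ a set of interior points, so it is not representable by Proposition \ref{prouni}. The ``main obstacle'' you identify --- ruling out a $\min$/$\max$ jump on the off-diagonal blocks when neither underlying operation has interior idempotents --- is therefore not an obstacle to be overcome but precisely the second branch of the conclusion; the generator-gluing argument cannot succeed because for these uninorms the generators of $T_U$ and $C_U$ do not glue into a single additive generator.

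The dichotomy has to be produced \emph{inside} the doubly-Archimedean case, by examining the behaviour of $U$ on $\cint{0,e}\times\cint{e,1}$ (equivalently, the shape of its set of discontinuity points): either $U$ is internal there, in which case the absence of interior idempotents together with $U\in\mathcal{N}$ forces $U$ to coincide on $\opint{0,1}^2$ with a member of $\mathcal{U}_{\min}$ or of $\mathcal{U}_{\max}$, i.e.\ $U\in\mathcal{N}_{\min}\cup\mathcal{N}_{\max}$; or $U$ takes a value in $\opint{x,y}$ for some $x<e<y$, and then one must show, using the structure of the discontinuity set from \cite{mulfun} and the Archimedean hypothesis, that the discontinuities can only sit at $(0,1)$ and $(1,0)$, so that Proposition \ref{prouni} applies. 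Your sketch contains none of the machinery needed to separate these two branches. (Note also that the present paper merely quotes this proposition from \cite{repord} and does not reprove it, so there is no in-paper proof to compare against.)
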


\begin{proposition}
Assume a uninorm $U\colon \uint^2\lran \uint$ such that $U\in \mathcal{U}$ and $U\notin \mathcal{N}.$ Then $U$ is an ordinal sum of a uninorm and a non-proper uninorm
(i.e., a t-norm or a t-conorm). \label{pron}
\end{proposition}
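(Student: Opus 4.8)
The plan is to reduce to the conjunctive case by duality and then peel a continuous Archimedean t-norm off the ``lower-left corner'' of $U$. Passing, if necessary, to the dual uninorm $U^{\ast}(x,y)=1-U(1-x,1-y)$ — which again lies in $\mathcal{U}$, fails to be in $\mathcal{N}$ exactly when $U$ does, interchanges conjunctive and disjunctive uninorms, and converts an ordinal-sum decomposition of $U^{\ast}$ into one of $U$ with the roles of t-norm and t-conorm summands swapped — I may assume $U$ is conjunctive, i.e.\ $U(1,0)=0$. Then $U(x,0)=0$ for every $x\in\uint$, so the only way $U\notin\mathcal{N}$ can hold is that $U(x_{0},1)<1$ for some $x_{0}$; since $U(e,1)=1$ and $U(\cdot,1)$ is non-decreasing, such an $x_{0}$ lies in $\ouint$ with $x_{0}<e$.

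The heart of the argument is an analysis of the section $u_{1}=U(\cdot,1)$. Associativity gives $U(U(z,1),1)=U(z,U(1,1))=U(z,1)$, so the range of $u_{1}$ consists of fixed points of $u_{1}$; moreover $u_{1}(z)=U(z,1)\ge U(z,e)=z$ for $z\le e$ and $u_{1}(e)=1$, so $Z=\{x:U(x,1)=1\}$ is an up-set containing $\cint{e,1}$ and $d:=\inf Z\in\ouint$. Let $p\in\ruint$ be the largest number such that $U(\cdot,1)$ is the identity on $\cint{0,p}$. The main technical step — and the place where the continuity of $T_{U}$ is indispensable — is to show that $p>0$ and that $p$ is an idempotent point of $U\!\mid_{\cint{0,e}^{2}}$. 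Here one works on the first Archimedean summand of $U\!\mid_{\cint{0,e}^{2}}$ (rescaled to its own carrier) and uses its additive generator: combining the identity $U(z,u_{1}(z))=u_{1}(U(z,z))$ with $u_{1}\ge\mathrm{id}$, with the fact that the range of $u_{1}$ equals its fixed-point set, and with the continuity of $z\mapsto U(z,z)$ on that summand (where it is strictly below $z$ in the strict case and vanishes on an initial interval in the nilpotent case) restricts the levels at which $U(\cdot,1)$ can rest so severely that $U(\cdot,1)$ must be the identity on the whole closure of that first summand; hence $p$ is at least the right endpoint of that summand, so $p>0$, and $p$ is idempotent. Consequently $U(p,x)=\min(p,x)$ for all $x\in\cint{0,e}$.

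With $p$ in hand the decomposition is read off directly. Since $p$ is idempotent, $W:=U\!\mid_{\cint{0,p}^{2}}$ is a continuous t-norm on $\cint{0,p}$ with neutral element $p$; and $V:=U\!\mid_{\cint{p,1}^{2}}$ is closed — for $x,y\in\cint{p,e}$ we have $U(x,y)\ge U(p,y)=p$, while on the remaining parts $\min\le U\le\max$ keeps the value in $\cint{p,1}$ — so $V$ is a uninorm with neutral element $e$ (proper if $p<e$, and equal to $C_{U}$ if $p=e$). Furthermore, for $x<p$ and $y\ge p$ we get $x=\min(x,y)\le U(x,y)$ together with $U(x,y)\le U(x,e)=x$ when $y\le e$ and $U(x,y)\le U(x,1)=x$ when $y\ge e$ (using that $U(\cdot,1)$ is the identity on $\lint{0,p}$), hence $U(x,y)=x$. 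These are precisely the entries produced by the two-summand ordinal sum $(\langle 0,p,1,1,W\rangle,\langle p,e,e,1,V\rangle)^{e}$, which therefore coincides with $U$; thus $U$ is the ordinal sum of the uninorm $V$ and the non-proper uninorm $W$ (a t-norm), and in the dual (disjunctive) case one obtains a t-conorm summand instead. The main obstacle throughout is the technical step of the second paragraph: controlling the section $U(\cdot,1)$ near $0$ and tying the levels at which it can rest to the idempotent and Archimedean structure of $T_{U}$.
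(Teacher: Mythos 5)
The paper does not actually prove this proposition --- it is recalled without proof from the companion manuscript on ordinal sums of representable uninorms --- so your argument has to stand on its own. Your overall strategy is sound: the duality reduction to the conjunctive case is correct, for conjunctive $U$ the failure of $U\in\mathcal{N}$ does force $U(x_0,1)<1$ for some $x_0\in\opint{0,e}$, and your final paragraph correctly verifies that once one has an idempotent $p>0$ with $U(\cdot,1)=\mathrm{id}$ on $\lint{0,p}$, the two-summand ordinal sum $(\langle 0,p,1,1,W\rangle,\langle p,e,e,1,V\rangle)^e$ reproduces $U$. The gap is that the entire weight of the proof rests on your second paragraph, and there nothing is actually proved: ``restricts the levels at which $U(\cdot,1)$ can rest so severely that\dots'' is a promissory note, not an argument. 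Worse, the object you propose to work on --- ``the first Archimedean summand of $U\vert_{\cint{0,e}^2}$'' --- need not exist: the idempotents of $T_U$ may accumulate at $0$ (e.g.\ an ordinal sum over the intervals $\cint{\tfrac{1}{n+1},\tfrac1n}$), or $T_U$ may equal $\min$ near $0$, and then the statement ``$p$ is at least the right endpoint of that summand'' is vacuous. Also, your ``largest $p$'' need not be attained as a maximum; you should work with $d=\inf\{x\mid U(x,1)=1\}$ instead.

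What has to be shown is that $u_1(v)\in\{v,1\}$ for \emph{every} $v\in\lint{0,e}$; then $u_1=\mathrm{id}$ on $\lint{0,d}$ with $d\geq x_0>0$, and one cuts at $d$. Concretely: since $z:=u_1(v)$ is a fixed point of $u_1$ and $z\geq e$ forces $z=U(z,1)\geq U(e,1)=1$, assume $v\leq z<e$. If $v$ is idempotent, then $z=U(v,v,1)=U(v,z)\leq\min(v,z)\leq v$, so $z=v$. If $v$ lies in the interior of an Archimedean summand $\cint{a,b}$ of $T_U$ with additive generator $t$, first note $z\leq b$ (else $u_1(b)=z$ with $b<z<e$, contradicting the idempotent case), so associativity gives $U(z,w)=U(U(v,w),1)=u_1(U(v,w))$ for all $w\in\cint{a,b}$, i.e.\ in generator coordinates $u_1(t^{-1}(s))=t^{-1}(s-\delta)$ with $\delta=t(v)-t(z)$ for all admissible $s\geq t(v)$. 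If the summand is strict, iterating yields $u_1(u_1(w'))=t^{-1}(t(w')-2\delta)$ for $w'$ near $a$, which contradicts $u_1\circ u_1=u_1$ unless $\delta=0$, i.e.\ $z=v$; if it is nilpotent, pick $w_0$ with $t(w_0)=t(a)-t(v)$, so $U(v,w_0)=a$ and hence $U(z,w_0)=u_1(a)=a$, giving $t(z)\geq t(v)$ and again $z\leq v$. Finally one must check that $d$ itself is idempotent (otherwise take $w<d<v$ inside one Archimedean summand and compute $w=u_1(U(v,w))=U(v,w)<w$, a contradiction), after which your closing paragraph applies verbatim with $p=d$. Until this step is written out, the proof is a plan rather than a proof.
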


The previous two propositions imply that if for a uninorm $U\in \mathcal{U}$ both underlying operations are Archimedean then either  $U\in \mathcal{N}_{\min}\cup \mathcal{N}_{\max}$ or
 $U\in \mathcal{U}_{\min}\cup \mathcal{U}_{\max}$ or $U$ is a representable uninorm.

In the following we will say that a uninorm is irreducible with respect to the ordinal sum construction if it can be expressed only as a trivial ordinal sum, i.e., with the only non-empty summand  defined on $(\lint{0,e}\cup\rint{e,1})^2,$ or on $\uint^2.$

\begin{proposition}
Let $U\colon \uint^2\lran \uint$ be a  uninorm such that $U\in \mathcal{U}$ and $U\in \mathcal{N}_{\min}$  ($U\in  \mathcal{N}_{\max}$). Then if $U$ has no idempotent points in $\opint{e,1}$ ($\opint{0,e}$).
$U$ is irreducible with respect to the ordinal sum construction.
\end{proposition}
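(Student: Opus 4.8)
The plan is to argue by contradiction. Suppose $U^e=(\langle a_k,b_k,c_k,d_k,U_k\rangle\mid k\in K)^e$ is an ordinal sum representation of $U$ that is \emph{non-trivial}, i.e.\ it is neither the representation with a single summand covering $\uint^2$ nor the one with a single summand covering $(\lint{0,e}\cup\rint{e,1})^2$. I will derive a contradiction, using that $C_U$ is continuous (since $U\in\mathcal{U}$), that $U$ has no idempotent point in $\opint{e,1}$, and that $U\in\mathcal{N}_{\min}$ --- the latter meaning both that $U(x,y)=\min(x,y)$ on $\opint{0,e}\times\opint{e,1}$ and that $U(x,1)=1$ for all $x\in\ruint$ (as $U\in\mathcal{N}$).

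The first step forces the whole t-conorm side to be one block. For every summand with $c_k<d_k$, a short computation with the defining ordinal-sum formula shows that $c_k$ and $d_k$ are idempotent points of $U^e$: on $\rint{c_k,d_k}^2$ the operation is a scaled copy of the t-conorm $C_{U_k}$, so $U^e(d_k,d_k)=d_k$, and at the corner $(c_k,c_k)$ every branch of the definition that can apply returns $c_k$. Since $c_k,d_k\in\cint{e,1}$ and $U$ has no idempotent point in $\opint{e,1}$, we must have $c_k=e$ and $d_k=1$. Two blocks equal to $\cint{e,1}$ cannot coexist, and at least one non-degenerate conorm block exists because $\cint{e,1}$ is not a countable union of one-point sets; hence there is exactly one summand $k_0$ with $c_{k_0}=e$, $d_{k_0}=1$, and every other summand $k$ has $c_k=d_k$. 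Thus each non-empty $k\neq k_0$ has a non-degenerate t-norm block $\cint{a_k,b_k}\subseteq\cint{0,e}$ and a degenerate conorm block, so $U_k$ is a t-norm.

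The second step controls these t-norm summands and produces the contradiction. Applying anti-comonotonicity against $k_0$ together with disjointness of the t-norm blocks gives $c_k=d_k\in\{e,1\}$ for non-empty $k\neq k_0$ (a value $p\in\opint{e,1}$ would force simultaneously $b_k>a_{k_0}$ and $b_{k_0}>a_k$ for intervals with disjoint interiors, which is impossible), with at most one such summand over $1$ and at most one over $e$. The case $c_k=d_k=1$ is excluded by $U\in\mathcal{N}$: the relevant branch of the formula yields $U^e(x,1)=x$ for $x\in\lint{a_k,b_k}$, and since $0\le a_k<b_k\le e<1$ one may take such an $x$ in $\ouint$, giving $U(x,1)=x\neq1$. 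Hence the only non-empty summands are $k_0$ and, possibly, a single t-norm summand $j$ with $c_j=d_j=e$. If $j$ is absent, covering $\cint{0,e}$ forces $a_{k_0}=0$, $b_{k_0}=e$, so $k_0$ alone covers $\uint^2$ and the sum is trivial --- contradiction. If $j$ is present, then $\cint{a_{k_0},b_{k_0}}$ and $\cint{a_j,b_j}$ are disjoint-interior intervals whose union is $\cint{0,e}$ up to countably many points, and anti-comonotonicity ($c_{k_0}=e\ge e=d_j$ gives $b_{k_0}\le a_j$) forces $a_{k_0}=0$, $b_{k_0}=a_j=:s<e$, $b_j=e$. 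Now pick $x\in\opint{s,e}$ and $y\in\opint{e,1}$ (both non-empty since $s<e<1$): then $x\in\cint{b_{k_0},c_{k_0}}$ and $y\in\cint{a_{k_0},d_{k_0}}\setminus\cint{b_{k_0},c_{k_0}}$, so the corresponding branch gives $U^e(x,y)=y$, whereas $\min(x,y)=x<y$ --- contradicting $U(x,y)=\min(x,y)$ on $\opint{0,e}\times\opint{e,1}$. Hence no non-trivial representation exists, so $U$ is irreducible; the $\mathcal{N}_{\max}$ statement follows by the obvious duality ($\min\leftrightarrow\max$, t-norm $\leftrightarrow$ t-conorm, $0\leftrightarrow1$, $\cint{0,e}\leftrightarrow\cint{e,1}$).

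The step I expect to be the main obstacle is identifying, inside the long case-split definition of the ordinal sum, exactly which branch computes $U^e(x,1)$ and $U^e(x,y)$ in the two contradictions above, and carrying the anti-comonotonicity and covering bookkeeping that pins down the endpoints $a_k,b_k,c_k,d_k$; once the t-conorm side has been reduced to the single block $\cint{e,1}$, the remainder of the argument is essentially forced.
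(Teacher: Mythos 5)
Your proof is correct and uses the same three ingredients as the paper's, in a permuted order: the absence of idempotents in $\opint{e,1}$ pins the t-conorm side to a single block $\cint{e,1}$; membership in $\mathcal{N}$ kills a t-norm summand sitting over $1$ via $U(x,1)=x$; and the $\min$ behaviour on $\opint{0,e}\times\opint{e,1}$ kills a t-norm summand over $e$. The paper reaches the same conclusion more quickly by first coarsening any non-trivial ordinal sum to two summands, one on $(\lint{0,a}\cup\rint{b,1})^2$ and one on $\cint{a,b}^2$, and then arguing about the two endpoints $a,b$ alone; your version instead carries the full multi-summand bookkeeping, which is longer but makes the role of anti-comonotonicity and the covering conditions explicit. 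One small hole: your parenthetical argument excluding $c_k=d_k=p\in\opint{e,1}$ (that $b_k>a_{k_0}$ and $b_{k_0}>a_k$ contradict disjointness) silently assumes $\opint{a_{k_0},b_{k_0}}$ is non-empty; if $a_{k_0}=b_{k_0}$ there is no clash of interiors. The conclusion still holds, and more directly: such a $p$ is itself an idempotent of the ordinal sum (the construction gives $U^e(p,p)=\max(p,p)=p$ there), so it is already forbidden by the hypothesis, with no appeal to anti-comonotonicity needed.
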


\begin{proof}
Assume $U\in \mathcal{U}$ and $U\in \mathcal{N}_{\min} $ (the case when  $U\in \mathcal{N}_{\max} $ can be shown analogically). If $U$ would be an ordinal sum of at least two uninorms, then there would exist $a,b\in \uint,$ $a\leq e\leq b$ such that $U$ is an ordinal sum of two uninorms  one acting on  $(\lint{0,a}\cup \rint{b,1})^2$
and the other on $\cint{a,b}^2.$ Then  $U(x,y)=\max(x,y)$  for $(x,y)\in \cint{a,b}\times\rint{b,1},$ however, since
$U(x,y)=\min(x,y)$ for all $(x,y)\in \lint{0,e}\times\lint{e,1},$ we get either $a=e$ or $b=1.$ If $b=1$ then
the ordinal sum construction implies $U(x,y)=\min(x,y)$ on $ \lint{0,a}\times\cint{a,b},$ i.e., $U(x,1)=x$ for $x<a$ what means that either $a=0$ or
$U\notin\mathcal{N}.$ Since $U\in \mathcal{N}$ assume $a=0.$ However, in such a case we have summands on $(\lint{0,0}\cup \rint{1,1})^2$ and on $\uint^2,$ i.e., the ordinal sum construction is trivial.  Therefore suppose that $a=e,$ $b<1.$ If $b\neq e$ then $b$ is a non-trivial idempotent point in $\opint{e,1},$ i.e., we get
$b=e$ and thus the ordinal sum construction is again trivial.
\end{proof}

The previous proposition shows that there is a problem with uninorms $U\in \mathcal{N}$ such that there exists a uninorm $U_1,$ which is an ordinal sum of a non-proper uninorm and a uninorm, and $U=U_1$  on $\opint{0,1}^2.$

Further, we have the following result.

\begin{lemma}
Let $U\colon \uint^2\lran \uint$ be a  uninorm. Then both binary functions $U^*\colon \uint^2\lran \uint$ and
$U_*\colon \uint^2\lran \uint$ given by
\begin{equation}U^*(x,y)=\begin{cases} 1 &\text{if $\max(x,y)=1,$}\\
0 &\text{if $\min(x,y)=0,\max(x,y)<1,$}\\
U(x,y) &\text{otherwise}\\\end{cases}\label{eqbor1}\end{equation} and
\begin{equation}U_*(x,y)=\begin{cases} 0 &\text{if $\min(x,y)=0,$}\\
1 &\text{if $\min(x,y)>0,\max(x,y)=1,$}\\
U(x,y) &\text{otherwise}\\\end{cases}\label{eqbor2}\end{equation} are  uninorms if and only if  there are no such $x_1,x_2\in \opint{0,1}$ that
$U(x_1,x_2)\in \{0,1\}.$   \label{lem2}
\end{lemma}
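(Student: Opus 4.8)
The plan is to prove both implications by relating the functions $U^*$ and $U_*$ to the uninorm axioms, with the key observation being that the only axiom at risk is associativity: commutativity, monotonicity and the neutral element $e$ are all immediate from the corresponding properties of $U$ (note $U^*(x,e) = U^*(e,x) = x$ for $x \in \opint{0,1}$ since then $\min(x,e)>0$ and $\max(x,e)<1$, and for $x \in \{0,1\}$ the boundary clauses give $U^*(0,e)=0$, $U^*(1,e)=1$; likewise for $U_*$). Monotonicity holds because each boundary clause ($1$ on the top/right edge except the corner handled by the other clause, $0$ elsewhere on the bottom/left edge) fits consistently below/above the interior values $U(x,y)$, using $0 \le U \le 1$. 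So I would open the proof by reducing everything to: $U^*$ (resp. $U_*$) is associative if and only if $U(x_1,x_2)\notin\{0,1\}$ for all $x_1,x_2\in\opint{0,1}$.

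For the "only if" direction I would argue by contraposition. Suppose $U(x_1,x_2)=0$ for some $x_1,x_2\in\opint{0,1}$ (the case $U(x_1,x_2)=1$ is dual, swapping the roles of $U^*$ and $U_*$, or handled by the same symmetry). Consider the triple $(x_1,x_2,1)$ evaluated under $U^*$. On one grouping, $U^*(x_1,x_2)=U(x_1,x_2)=0$ (interior case, since $\min(x_1,x_2)>0$ and $\max(x_1,x_2)<1$), hence $U^*(U^*(x_1,x_2),1)=U^*(0,1)=0$ by the second clause. On the other grouping, $U^*(x_2,1)=1$, hence $U^*(x_1,U^*(x_2,1))=U^*(x_1,1)=1$. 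Since $0\neq 1$, $U^*$ is not associative. A parallel computation with $(x_1,x_2,0)$ shows $U_*$ is not associative when $U(x_1,x_2)=1$. I expect this to be the easy half.

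For the "if" direction — which I expect to be the main obstacle — I assume $U(x_1,x_2)\notin\{0,1\}$ for all $x_1,x_2\in\opint{0,1}$ and must verify $U^*(U^*(x,y),z)=U^*(x,U^*(y,z))$ for all $x,y,z$. The natural approach is a case analysis on how many of $x,y,z$ lie in $\{0,1\}$ versus $\opint{0,1}$, and within that, whether any equals $0$ or $1$. The crucial structural fact needed is that under the standing hypothesis, $U$ maps $\opint{0,1}^2$ into $\opint{0,1}$, so that "staying in the interior" is preserved: if $x,y,z\in\opint{0,1}$ then $U(x,y),U(y,z)\in\opint{0,1}$ and both sides reduce to $U(U(x,y),z)=U(x,U(y,z))$ by associativity of $U$. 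When at least one argument is a boundary value, I would track which boundary clause fires; the content is that the "$1$ dominates, else $0$ if some argument is $0$" priority is itself associative as an operation on $\{0,1,\text{interior}\}$, and that it is compatible with $U$ on the mixed cases. For instance, if $x=1$: both sides should equal $1$, which needs $U^*(1,y)=1$ (true) and then $U^*(1,z)=1$ on the left, while on the right $U^*(x,U^*(y,z))=U^*(1,\cdot)=1$ regardless of the inner value. If exactly one argument is $0$, say $x=0$, and $y,z\in\opint{0,1}$: left side is $U^*(0,z)=0$ (as $z<1$); right side is $U^*(0,U(y,z))$, and since $U(y,z)\in\opint{0,1}$ by hypothesis, this is $0$. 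Had the hypothesis failed with $U(y,z)=1$, the right side would be $U^*(0,1)=0$ still — but the genuinely problematic triple is the one exhibited above with a $1$ in the third slot, so the case bookkeeping must be done carefully to see exactly where the hypothesis is used.

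I would organize the "if" direction as a short lemma-internal table: list the cases (all interior; exactly one of $x,y,z$ equals $1$; $\ge$ two equal $1$; some equal $0$ and none equal $1$; mixtures), check each, and remark that the dual computation establishes the claim for $U_*$ by replacing $U$ with its "dual" $\tilde U(x,y)=1-U(1-x,1-y)$ — or simply by repeating the argument with $0$ and $1$ interchanged, which maps $U^*$-clauses to $U_*$-clauses and preserves the hypothesis $U(x_1,x_2)\notin\{0,1\}$. The only real subtlety, and hence the step to flag, is making sure the interior-preservation $U(\opint{0,1}^2)\subseteq\opint{0,1}$ is invoked at precisely the triples where a naive check would otherwise break associativity.
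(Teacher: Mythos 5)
Your reduction of the problem to associativity, and your ``if'' direction, match the paper's (very terse) argument: under the hypothesis the open square $\opint{0,1}^2$ is closed under $U$, so $U$ restricts to a semigroup there and the boundary clauses merely graft the standard conjunctive/disjunctive border behaviour onto it. That half is essentially correct, if only sketched.

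The ``only if'' direction, however, contains a genuine error: you have paired each type of failure with the wrong function, because you misread the clause priority of $U^*$ at the corner. For $(x,y)=(0,1)$ the \emph{first} clause of \eqref{eqbor1} applies (since $\max(0,1)=1$), so $U^*(0,1)=1$; the second clause requires $\max(x,y)<1$ and never fires at $(0,1)$. Hence in your test triple $(x_1,x_2,1)$ with $U(x_1,x_2)=0$ both groupings of $U^*$ evaluate to $1$ and no contradiction arises --- and indeed $U^*$ can genuinely remain a uninorm in this situation, which is exactly the content of the paper's Lemma \ref{lem2a}. The function that breaks is $U_*$: one grouping gives $U_*(U_*(x_1,x_2),1)=U_*(0,1)=0$, the other gives $U_*(x_1,U_*(x_2,1))=U_*(x_1,1)=1$ because $x_1>0$. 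Dually, $U(x_1,x_2)=1$ breaks $U^*$ via the triple $(x_1,x_2,0)$, where $U^*(1,0)=1$ but $U^*(x_1,0)=0$ since $x_1<1$; your ``parallel computation'' for $U_*$ on $(x_1,x_2,0)$ gives $0=0$ on both sides and proves nothing. The same misreading reappears in your aside in the ``if'' direction (``the right side would be $U^*(0,1)=0$ still''). Since the lemma asserts that \emph{both} $U^*$ and $U_*$ are uninorms iff no interior pair is mapped to $\{0,1\}$, the contrapositive is rescued simply by swapping which of the two functions you test against which kind of violation, but as written your argument for this direction fails.
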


\begin{proof}
If there are no  $x_1,x_2\in \opint{0,1}$ such that
$U(x_1,x_2)\in \{0,1\}$ then  $U$ can be restricted to $\opint{0,1}^2$  and then  evidently  the functions $U^*$ and $U_*$ are uninorms.
\end{proof}

\begin{lemma}
Let $U\colon \uint^2\lran \uint$ be a  uninorm, $U\in \mathcal{U},$ such that there exist $x_1,x_2\in \opint{0,1}$ such that $U(x_1,x_2)\in \{0,1\}.$  Then $U^*$ from the previous lemma is a uninorm if and only if $U=U_1$  on $\opint{0,1}^2,$ where $U_1 \in  \mathcal{U}$ is an ordinal sum of a t-norm and a uninorm. Further,
$U_*$ from the previous lemma  is a uninorm if and only if $U=U_2$  on $\opint{0,1}^2,$ where $U_2 \in  \mathcal{U}$ is an ordinal sum of a t-conorm and a uninorm. \label{lem2a}
\end{lemma}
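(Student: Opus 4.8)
The plan is to prove both equivalences in Lemma~\ref{lem2a} in parallel, since the $U_*$ statement is dual to the $U^*$ statement (apply the standard duality $x\mapsto 1-x$, which swaps t-norms with t-conorms and $U^*$ with $U_*$), so it suffices to treat $U^*$. I will first analyse what the hypothesis ``there exist $x_1,x_2\in\opint{0,1}$ with $U(x_1,x_2)\in\{0,1\}$'' forces, given that $U\in\mathcal{U}$. Since $T_U$ and $C_U$ are continuous, the monotonicity and associativity of $U$ severely restrict where such a ``jump to $0$ or $1$'' can happen: without loss of generality $U(x_1,x_2)=0$ (the case value $1$ will again be dual, and one checks the two cannot coexist in a way incompatible with $U^*$ being a uninorm). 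The key structural observation is that the set $Z=\{x\in\opint{0,1}\mid U(x,y)=0 \text{ for some } y\in\opint{0,1}\}$, together with the values it produces, localizes the phenomenon to the lower part $\cint{0,e}$: using $\min(x,y)\le U(x,y)$ on the mixed region $\cint{0,e}\times\cint{e,1}$ one sees that a zero value with both arguments in $\opint{0,1}$ can only be produced with both arguments in $\opint{0,e}$ (or forced down from there), i.e. it is a feature of $T_U$ being nilpotent on an initial block. Concretely, there is a point $0<a\le e$ such that $U$ restricted to $\cint{0,a}^2$ is (a rescaled) nilpotent t-norm reaching $0$, and on $\cint{0,a}\times\cint{a,1}$ the uninorm behaves like $\min$ on the relevant boundary.

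Next, for the ``only if'' direction, assume $U^*$ is a uninorm. I would show that then $U$ agrees on $\opint{0,1}^2$ with the ordinal sum $U_1=(\langle 0,a,1,1,N\rangle,\langle a,a,a,1,U'\rangle)^{?}$ — more precisely, an ordinal sum of a nilpotent (hence continuous Archimedean) t-norm $N$ on an initial interval and a uninorm $U'$ on the complementary block, in the sense of Proposition~\ref{proorduni}. The point is that requiring $U^*$ (which erases all interior zero-values and replaces them by $0$ only on the coordinate axes) to remain associative forces the interior zero-set of $U$ to be exactly a ``t-norm block'' $\cint{0,a}^2$ that detaches cleanly; any other configuration — e.g. interior zeros produced by interaction across $e$, or a strict (non-nilpotent) underlying t-norm that nonetheless hits $0$ in the interior only via the annihilator — would make $U^*$ violate associativity at a triple straddling $a$. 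This is the reverse-engineering step: from the constraint on $U^*$, reconstruct $a$ and the decomposition, invoking Proposition~\ref{pron} and Proposition~\ref{sumstrp} to identify the pieces as genuine (Archimedean) summands and to place $U_1$ in $\mathcal{U}$.

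For the ``if'' direction, suppose $U=U_1$ on $\opint{0,1}^2$ with $U_1\in\mathcal{U}$ an ordinal sum of a t-norm $T$ and a uninorm. Then on $\opint{0,1}^2$ the only interior zero-values of $U$ come from the t-norm summand sitting on some $\cint{0,a}$, and outside that block $U_1=U$ takes values in $\ruint$. One then checks directly from \eqref{eqbor1} that $U^*$ coincides with the ordinal sum in which the initial t-norm summand $T$ is replaced by the drastic t-norm $T_D$ (which sends everything off the axes and off $1$ to $0$), and the drastic product is associative and compatible with an ordinal sum; hence $U^*$ is a uninorm. Here Lemma~\ref{lem2} is the relevant sanity check: it tells us $U^*$ fails to be a uninorm precisely when the zero-values are \emph{not} confined to such a detachable block.

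The main obstacle I anticipate is the ``only if'' direction: ruling out all the pathological ways the interior zero-set could be arranged while still leaving $U^*$ associative. In particular I must exclude (i) a \emph{strict} underlying t-norm $T_U$ that still produces interior zeros because the uninorm's annihilator is $0$ and some disjunctive behaviour across $e$ pushes a pair in $\opint{0,1}^2$ down to $0$, and (ii) zero-values produced ``from above'' on the mixed region $\cint{0,e}\times\cint{e,1}$. The tool for both is a careful associativity computation $U^*(U^*(x,y),z)=U^*(x,U^*(y,z))$ at well-chosen triples near the boundary of the putative block, combined with the continuity of $T_U,C_U$ and the boundary inequality $\min(x,y)\le U(x,y)\le\max(x,y)$ on the mixed region; the continuity is what guarantees the zero-set of $T_U$, if nonempty in the interior, is exactly an initial square $\cint{0,a}^2$ with $a$ an idempotent point, giving the clean summand needed to apply Proposition~\ref{proorduni}.
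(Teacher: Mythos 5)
Your treatment of the main (``only if'') direction follows essentially the paper's route: you localize the interior zeros of $U$ to an initial block $\cint{0,f}^2$ below an idempotent (using continuity of $T_U$, the inequality $\min(x,y)\leq U(x,y)$ on the mixed region, and the fact that the zero set of a nilpotent summand is bounded by a strictly decreasing curve $r^*$), and you then force $U^*(x,y)=\min(x,y)$ on $\opint{0,f}\times\cint{e,1}$ by an associativity computation at triples straddling the block. The paper's specific triple is $(x,r^*(x),y)$ with $y\geq e$: one has $U^*(x,r^*(x),y)=U^*(0,y)=0$, while $U^*(x,y)>x$ would let the regrouped product stay positive. Your duality reduction of the $U_*$ claim to the $U^*$ claim is likewise consistent with the paper, which instead kills $U_*$ directly via $1=U_*(x_1,x_2,1)=U_*(0,1)=0$. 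You should, however, pin down the triple explicitly rather than leaving it at ``well-chosen triples near the boundary.''

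The genuine gap is in your ``if'' direction. By \eqref{eqbor1}, $U^*$ agrees with $U$ everywhere on $\opint{0,1}^2$ and is modified only on the border of the unit square; in particular the initial t-norm summand $T$ on $\cint{0,a}^2$ is \emph{not} replaced by the drastic t-norm. For instance, if $T(u,v)=\max(0,u+v-1)$ then $U^*(0.7a,0.7a)=U(0.7a,0.7a)=0.4a\neq 0$, whereas your claimed identification would give $0$. So the verification you build on that identification does not go through. What is actually needed is a direct check that the border redefinition preserves associativity, the crux being triples $(x,y,0)$ and $(x,y,1)$ with $x,y\in\opint{0,1}$; this hinges on the detached t-norm block keeping $U^*(x,y)$ away from $1$ in the relevant cases, which is exactly the kind of obstruction Lemma \ref{lem2} and the first computation in the paper's proof isolate. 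The paper leaves this converse implicit, so you are attempting more than it records, but the step as written rests on a false identity and must be replaced.
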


\begin{proof}
Assume $U(x_1,x_2)=0$  for some $x_1,x_2\in \opint{0,1}$ (similar result can be obtained in the case when $U(x_1,x_2)=1$). Then if $U_*$ is associative we have $1=U_*(x_1,1)=U_*(x_1,x_2,1)=U_*(0,1)=0$ what is a contradiction. Thus $U_*$ is not a uninorm. Further, evidently $x_1<e$ and $x_2<e$ and $T(x,x)<x$ for all $x\in \opint{0,f},$ where $f$ is the smallest idempotent point of $U$ such that $f\geq \max(x_1,x_2).$
Since each continuous  t-norm can be expressed as an ordinal sum of continuous Archimedean t-norms, and since for each nilpotent t-norm $T_N$ there exists a continuous strictly decreasing function $r_T\colon \uint\lran \uint$ such that  $T(x,y)=0$ if and only if $y\leq r_T(x)$ we see that there exists a continuous strictly decreasing function $r^*\colon \cint{0,f}\lran \cint{0,f}$ such that $U(x,y)=0$ for $(x,y)\in \rint{0,1}^2$ if and only if $y\leq r^*(x).$ Assume $x\in \opint{0,f}.$
Then for $U^*$ and $y\in \lint{e,1}$ we have $U^*(x,r^*(x),y)=U^*(0,y)=0$ and $U^*(x,y)\geq x,$  $U^*(r^*(x),y)\geq r^*(x).$
If $U^*(x,y)> x,$ or if   $U^*(r^*(x),y)> r^*(x)$ then $U^*(x,r^*(x),y)>0$
what is a contradiction, i.e.,  $U^*(x,y)= x$ for all $x\in \opint{0,f}.$
 Thus up to the border of the unit square $U^*$ is an ordinal sum of a t-norm on $\cint{0,f}^2$ and a uninorm on $\cint{f,1}^2.$
\end{proof}

\begin{example}
Let $U\colon \uint^2\lran \uint$ be a  uninorm $U\in \mathcal{U}$ and $U\in \mathcal{N}_{\min}, $ such that $T_U$ and $C_U$ are Archimedean.
Then $U$ is irreducible with respect to the ordinal sum construction for uninorms. However, $U$ can be still decomposed into an ordinal sum of semigroups in the sense of Clifford \cite{cli}. Then evidently summands supports should be unions of the following sets
$\{0\},\opint{0,e},\{e\},\opint{e,1},\{1\}.$ First assume $U(0,1)=1.$ Then similarly as in the proof of the previous lemma $C_U$ cannot be nilpotent, i.e.,  $U(x,y)=1$ if and only if $\max(x,y)=1.$ Thus $U$ is an ordinal sum of semigroups such that for the ordering of the semigroups $U_I$ respective to the support $I$ we have
$U_{\{1\}}<  U_{\lint{0,e}}< U_{\opint{e,1}} < U_{\{e\}}.$  Now assume $U(0,1)=0.$ Then again $T_U$ cannot be nilpotent and if $C_U$ is nilpotent from the previous lemma we see that $U(x,y)=\max(x,y)$ for some $(x,y)\in \opint{0,e}\times\opint{e,1},$ what is a contradiction. Thus both $T_U$ and $C_U$ are strict. Then $U$ can be obtained as an ordinal sum, where the order of semigroups is $U_{\{0\}} < U_{\{1\}} < U_{\opint{0,e}}< U_{\opint{e,1}} < U_{\{e\}}.$

\end{example}

If $U$ is such that $U(x,y)\in \opint{0,1}$ for all $(x,y)\in \opint{0,1}^2$ then there is no problem to decompose $U$ into three semigroups,
defined respectively  on $\{0\},$ $\opint{0,1}$ and $\{1\}.$ Thus we have to investigate the situation when underlying t-norm or t-conorm is nilpotent.

\begin{lemma}
Let $U\colon \uint^2\lran \uint$ be a  uninorm, $U\in \mathcal{U},$ and let $U(x_1,x_2)=0$ ($U(x_1,x_2)=1$) for some $x_1,x_2\in \opint{0,1}.$ Then $U$ is a non-trivial ordinal sum in the sense of Clifford.
\end{lemma}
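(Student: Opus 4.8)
The plan is to reduce the statement to the two lemmas already proved, namely Lemma \ref{lem2} and Lemma \ref{lem2a}, together with Proposition \ref{pron}. First I would observe that the hypothesis of the lemma is exactly the negation of the hypothesis of Lemma \ref{lem2}: we assume there do exist $x_1,x_2\in\opint{0,1}$ with $U(x_1,x_2)\in\{0,1\}$. Without loss of generality suppose $U(x_1,x_2)=0$ (the case $U(x_1,x_2)=1$ is dual, using $U_*$ in place of $U^*$). Since $U\in\mathcal{U}$, the underlying t-norm $T_U$ and t-conorm $C_U$ are continuous. As in the proof of Lemma \ref{lem2a}, the existence of such a pair forces $x_1,x_2<e$, and, because $T_U$ is continuous and the point where $U$ first hits $0$ is governed by a nilpotent Archimedean summand of $T_U$, there is a smallest idempotent point $f\ge\max(x_1,x_2)$ of $U$ with $f\in\opint{0,1}$ on whose square $U$ behaves like a nilpotent t-norm near the boundary; in particular $U(x,x)<x$ for $x\in\opint{0,f}$.

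The key step is then to exhibit an explicit non-trivial Clifford decomposition of $U$. I would split the unit interval at the idempotent point $f$: the restriction of $U$ to $\cint{0,f}^2$ is (a scaled copy of) a t-norm $T^*$, the restriction to $\cint{f,1}^2$ is a uninorm $U'$ with neutral element $e$ and annihilator $f$, and for mixed arguments $\min(x,y)\le U(x,y)\le\max(x,y)$ holds; the argument of Lemma \ref{lem2a} shows that in fact $U(x,y)=x$ whenever $x\in\cint{0,f}$ and $y\in\cint{f,1}$, at least on the interior, so that $f$ is simultaneously the neutral element of the semigroup on $\cint{0,f}$ and the annihilator of the semigroup on $\cint{f,1}$. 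This is precisely the pattern required by Clifford's Theorem \ref{thcli} with a two-element (or three-element, if one separates off $\{1\}$ or $\{0\}$) totally ordered index set, so $U$ is an ordinal sum of semigroups in the sense of Clifford, and since $0<f<1$ and $f$ is not an endpoint this decomposition is non-trivial. One must take a little care with the values of $U$ on the boundary $\{0\}\times\uint$ and $\{1\}\times\uint$: if $U$ fails to lie in $\mathcal{N}$, then as in Proposition \ref{pron} the boundary points $0$ and $1$ split off as their own one-point semigroups, which only increases the number of summands and keeps the sum non-trivial.

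I expect the main obstacle to be the boundary bookkeeping rather than any deep structural fact: the interior argument copying Lemma \ref{lem2a} gives $U(x,y)=x$ on $\opint{0,f}\times\opint{f,1}$ immediately, but to assemble a genuine Clifford ordinal sum one must pin down $U$ on the lines $x=0$, $x=1$, $y=f$, etc., and verify that $f$ really is neutral for the lower semigroup and annihilating for the upper one on the closed intervals, not merely on the interiors. Depending on whether $U\in\mathcal{N}$ and on the value $U(0,1)$, one gets a decomposition into two, three, or more totally ordered semigroups with supports among $\{0\},\lint{0,f},\opint{0,f},\{f\},\cint{f,1},\opint{f,1},\{1\}$; in every case the point $f\in\opint{0,1}$ witnesses non-triviality. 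The dual case $U(x_1,x_2)=1$ is handled symmetrically by reflecting through the centre, replacing $T_U$ by $C_U$, the ordinal sum at $f$ being of a uninorm on $\cint{0,f}^2$ and a t-conorm on $\cint{f,1}^2$.
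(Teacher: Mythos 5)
Your proposal is correct and follows essentially the same route as the paper: both arguments hinge on the structural conclusion of Lemma \ref{lem2a} (the nilpotent region forces $U(x,y)=x$ on $\opint{0,f}\times\opint{f,1}$ for the idempotent $f$ bounding that region, making $f$ a Clifford cut point) together with Proposition \ref{pron} for the boundary behaviour. The paper merely organises the same material differently, case-splitting first on whether $U\in\mathcal{N}$, whereas you construct the split at $f$ directly and treat the border points afterwards.
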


\begin{proof} We will show only the case when $U(x_1,x_2)=0$ (the case when $U(x_1,x_2)=1$ is analogous).
If $U\in \mathcal{N}$ then Lemma \ref{lem2} implies that $U(0,1)=1,$ and there is no $y_1,y_2\in \opint{0,1}$  such that $U(y_1,y_2)=1$ and  $U=U_1$  on $\opint{0,1}^2,$ where $U_1 \in  \mathcal{U}$ is an ordinal sum of a t-norm and a uninorm. Thus $U$ is an ordinal sum in the sense of Clifford where the order of semigroups is $U_{\{1\}}  < U_{\lint{0,a}}< U_{\lint{a,1}} $ for some corresponding $a\in \rint{0,e}.$

Suppose $U\notin \mathcal{N}.$ Then Proposition \ref{pron} implies that
 $U$ is an ordinal sum of a non-proper uninorm and a uninorm, i.e., either the order of semigroups is  $U_{\lint{0,a}}< U_{\cint{a,1}} $ or
 $U_{\cint{0,a}}> U_{\rint{a,1}} $ for some corresponding $a\in \opint{0,1}.$
\end{proof}

If we summarise these results, although some uninorms  $U\in \mathcal{U}$ cannot be decomposed into ordinal sum of uninorms with Archimedean underlying t-norm and t-conorm it seems that each uninorm  $U\in \mathcal{U}$ can be expressed as an ordinal sum of semigroups, in the sense of Clifford, such that each summand semigroup is either internal, or representable uninorm, or continuous Archimedean t-norm or continuous Archimedean t-conorm. We will try to prove this fact in the next section.

In the following we will define an extended ordinal sum construction. First we recall a result from
\cite[Proposition 2]{MMT}.

\begin{proposition}
An internal, commutative, non-decreasing binary function $O\colon \uint^2\lran \uint $ is associative. \label{coroass1}
\end{proposition}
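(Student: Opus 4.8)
The plan is to reduce associativity to a statement about a single triple and then exploit the rigidity that internality together with monotonicity imposes on $O$. First I would record that, since $O$ is commutative, the identity $O(O(x,y),z)=O(x,O(y,z))$ is equivalent to $O(O(x,y),z)=O(O(y,z),x)$ for all $x,y,z$ (one direction is immediate from commutativity, the other uses the hypothesis once). Since $O(x,y)$ depends only on the unordered pair $\{x,y\}$, this reduces to the following claim: for every triple, sorted as $a\le b\le c$, the three bracketings
$$O(O(a,b),c),\qquad O(O(a,c),b),\qquad O(O(b,c),a)$$
coincide. Internality also gives $O(t,t)=t$ for every $t$ (because $O(t,t)\in\{t\}$), which, with commutativity, settles all cases in which two of $x,y,z$ are equal; so it suffices to treat $a<b<c$.

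Next I would extract the structure of $O$ on $\{a,b,c\}$. Internality forces $O(a,b)\in\{a,b\}$, $O(a,c)\in\{a,c\}$ and $O(b,c)\in\{b,c\}$, while monotonicity — first raising the second argument from $b$ to $c$, then the first argument from $a$ to $b$ — yields the chain $O(a,b)\le O(a,c)\le O(b,c)$. Together with $a<b<c$ this chain is highly restrictive: the triple $\bigl(O(a,b),O(a,c),O(b,c)\bigr)$ must be exactly one of $(a,a,b)$, $(a,a,c)$, $(a,c,c)$, $(b,c,c)$ and nothing else — for instance $O(a,b)=b$ already forces $O(a,c)=c$ (as $O(a,c)\ge b>a$), and then $O(b,c)=c$.

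Finally I would dispatch the four patterns one at a time. In each, the three bracketings are evaluated by one more application of internality (e.g.\ $O(a,b)=a$ turns $O(O(a,b),c)$ into $O(a,c)$, and $O(a,c)=c$ turns $O(O(a,c),b)$ into $O(b,c)$): for $(a,a,b)$ and $(a,a,c)$ all three bracketings equal $a$, and for $(a,c,c)$ and $(b,c,c)$ all three equal $c$. Hence the three bracketings always agree, and by the initial reduction $O$ is associative.

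The main obstacle is conceptual rather than computational: recognizing that associativity of a commutative operation is equivalent to ``all bracketings of a sorted triple coincide,'' and observing that internality and monotonicity together pin the behaviour of $O$ on any triple down to precisely four patterns. After that the verification is routine; the only delicate points are getting the two monotonicity inequalities in the correct direction and confirming that the four patterns are genuinely exhaustive, while the equal-argument cases, though trivial, should be stated explicitly.
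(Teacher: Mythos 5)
Your proof is correct. Note, however, that the paper does not prove this proposition at all: it is recalled verbatim from the reference [Mart\'\i n--Mayor--Torrens, Proposition 2], so there is no internal proof to compare against. What you have supplied is a complete, elementary, self-contained argument. The key steps all check out: the reduction of associativity of a commutative operation to the agreement of the three bracketings $O(O(a,b),c)$, $O(O(a,c),b)$, $O(O(b,c),a)$ on each sorted triple $a\le b\le c$; the idempotency $O(t,t)=t$ disposing of repeated arguments; the monotonicity chain $O(a,b)\le O(a,c)\le O(b,c)$ combined with internality, which does pin the triple $\bigl(O(a,b),O(a,c),O(b,c)\bigr)$ down to exactly the four patterns $(a,a,b)$, $(a,a,c)$, $(a,c,c)$, $(b,c,c)$; and the case-by-case evaluation, which indeed yields $a$ in the first two patterns and $c$ in the last two. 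This is essentially the standard argument for local internality plus monotonicity implying associativity, and it is a useful addition in that it makes the paper's appeal to an external citation self-contained.
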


\begin{corollary}
Let $O\colon \uint^2\lran \uint $ be a commutative, non-decreasing binary function. If $O(x,y)\in \{x,y\},$ $O(x,z)\in \{x,z\}$
and $O(y,z)\in \{y,z\}$ for some $x,y,z\in \uint$  then $O(O(x,y),z)=O((y,z),x)=O(O(x,z),y).$ \label{lemass}
\end{corollary}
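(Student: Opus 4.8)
The plan is to reduce the claim to a short case analysis that exploits commutativity and monotonicity. First, observe that the conclusion $O(O(x,y),z)=O(O(y,z),x)=O(O(x,z),y)$ and the three hypotheses $O(x,y)\in\{x,y\}$, $O(x,z)\in\{x,z\}$, $O(y,z)\in\{y,z\}$ are together invariant under every permutation of $x,y,z$ (using commutativity of $O$ to absorb reorderings inside each binary product). Hence I may assume without loss of generality that $x\le y\le z$; the subcase $x=z$ forces $x=y=z$ and is trivial, so I assume $x<z$.

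Next, set $p=O(x,y)$, $q=O(y,z)$, $r=O(x,z)$. The hypotheses give $p\in\{x,y\}$, $q\in\{y,z\}$, $r\in\{x,z\}$, and monotonicity of $O$ yields $p=O(x,y)\le O(x,z)=r$ (monotone in the second argument, since $y\le z$) and $r=O(x,z)\le O(y,z)=q$ (monotone in the first argument, since $x\le y$), so that $p\le r\le q$. Because $r$ equals $x$ or $z$, this pins things down: if $r=x$ then $p\le x\le p$, so $p=x$; if $r=z$ then $q\ge z\ge q$, so $q=z$. These are the only two cases, and they are mutually exclusive since $x<z$.

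In the case $r=x$ (hence $p=x$) I compute $O(O(x,y),z)=O(x,z)=x$ and $O(O(x,z),y)=O(x,y)=x$, while $O(O(y,z),x)=O(q,x)$ with $q\in\{y,z\}$ equals either $O(x,y)=x$ or $O(x,z)=x$; all three expressions equal $x$. In the case $r=z$ (hence $q=z$) I compute symmetrically $O(O(y,z),x)=O(z,x)=z$ and $O(O(x,z),y)=O(z,y)=z$, while $O(O(x,y),z)=O(p,z)$ with $p\in\{x,y\}$ equals either $O(x,z)=z$ or $O(y,z)=z$; all three expressions equal $z$. This covers all cases, so the corollary follows.

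There is no genuine obstacle in this argument: the only points that need a moment of care are the reduction to the ordering $x\le y\le z$ (justified by the full symmetry of statement and hypotheses in the three variables together with commutativity) and making sure the sandwich $p\le r\le q$ is deduced from monotonicity in the correct coordinate. Everything else is routine bookkeeping across the two cases $r=x$ and $r=z$. I would not invoke Proposition \ref{coroass1} here, since $O$ is not assumed internal and the needed "closure" of the ternary products in $\{x,y,z\}$ is exactly part of what must be proved; the direct computation above is cleaner.
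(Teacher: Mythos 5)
Your proof is correct. The sandwich $O(x,y)\le O(x,z)\le O(y,z)$ obtained from monotonicity after the (legitimate) reduction to $x\le y\le z$ is exactly the right pivot: it forces $O(x,y)=x$ when $O(x,z)=x$ and $O(y,z)=z$ when $O(x,z)=z$, and in either case all three ternary products collapse to the common value $O(x,z)$. The permutation-invariance argument justifying the WLOG is sound, since commutativity absorbs the reordering inside each inner product and the three outer expressions are merely permuted among themselves.

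Your route differs from the paper's. The paper offers no written proof: the statement is presented as an immediate corollary of Proposition~\ref{coroass1} (the result of Mart\'\i n, Mayor and Torrens that an everywhere internal, commutative, non-decreasing operation on $\uint^2$ is associative), the implicit point being that the argument proving that proposition is purely local and therefore applies to any triple on which $O$ happens to act internally. Your instinct not to invoke Proposition~\ref{coroass1} directly is reasonable: $O$ is not assumed internal globally, so a literal application would require either extending $O\vert_{\{x,y,z\}}$ to an internal operation on all of $\uint^2$ or re-inspecting the cited proof, neither of which is cleaner than what you did. What your direct computation buys is a self-contained, checkable two-case argument; what the paper's phrasing buys is brevity and an explicit signpost that Corollary~\ref{lemass} is just the localization of the internal-associativity phenomenon (which is how it is later used in the Appendix, where associativity of the extended ordinal sum is verified triple by triple). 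One cosmetic remark: the exhaustiveness of your two cases is what matters, not their mutual exclusivity, and the degenerate subcase $x=z$ is in fact already covered by either case, so the separate disposal of it could be omitted.
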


The ordinal sum construction implies that if $a\in \uint$ is and idempotent element of the ordinal sum uninorm $U$ then
$U(a,x)\in \{a,x\}$ for all $x\in \uint.$ We recall also a result from \cite{mulfun}.

\begin{lemma}
Let  $U\colon \uint^2\lran \uint$ be a uninorm and let $U\in \mathcal{U}.$ If $a\in \uint$ is an idempotent point of $U$ then
$U$ is internal on $\{a\}\times \uint.$
\end{lemma}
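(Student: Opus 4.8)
*Let $U\colon \uint^2\lran \uint$ be a uninorm, $U\in\mathcal{U}$. If $a\in\uint$ is an idempotent point of $U$, then $U$ is internal on $\{a\}\times\uint$.*\textbf{Proof plan.} The plan is to show that for a fixed idempotent point $a$ of $U$ and every $x\in\uint$ we have $U(a,x)\in\{a,x\}$. First I would split the argument at $a$ using the neutral element $e$: on $\cint{0,a}$ (resp. $\cint{a,1}$) the element $a$ behaves like a neutral-type boundary, and I want to reduce the claim to the corresponding statement about the underlying t-norm $T_U$ or t-conorm $C_U$, which are continuous by the hypothesis $U\in\mathcal{U}$. Concretely, if $a\le e$, then $U$ restricted to $\cint{0,e}^2$ is (a rescaled copy of) $T_U$, and $a$ corresponds to an idempotent point of $T_U$; since $T_U$ is a continuous t-norm, its idempotent points are exactly the boundaries of the ordinal-sum summands, and for such a point $t$ one has $T_U(t,x)=\min(t,x)\in\{t,x\}$ for all $x$. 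So on $\{a\}\times\cint{0,e}$ we get $U(a,x)=\min(a,x)\in\{a,x\}$. Dually, if $a\ge e$, the same works on $\{a\}\times\cint{e,1}$ via $C_U$ giving $U(a,x)=\max(a,x)$.

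Next I would handle the "mixed" part, i.e. the values $U(a,x)$ with $a$ on one side of $e$ and $x$ on the other. Suppose $a\le e$ and take $x\in\rint{e,1}$ (the case $a\ge e$, $x\in\lint{0,e}$ being dual). Here the general monotonicity bound recalled in the excerpt gives $\min(a,x)=a\le U(a,x)\le \max(a,x)=x$, so $U(a,x)\in\cint{a,x}$. I would now use idempotency of $a$ together with associativity to pin this down: for any $z$ with $a\le z\le x$ lying in $\cint{a,e}$ we have, using that $a$ is idempotent and $z\mapsto U(a,z)$ is nondecreasing, $U(a,x)=U(a,a,x)=U(a,U(a,x))$, so $U(a,x)$ is a fixed point of $u_a$; combined with the fact (provable from continuity of $T_U$) that the set of fixed points of $u_a$ on $\cint{a,e}$ is exactly $\{a\}$ together with the interval $\cint{e,1}$-portion, I can force $U(a,x)\in\{a,x\}$. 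More cleanly, I would argue: if $a<U(a,x)<x$ then $U(a,x)=:w\in\ouint$, and applying $U(a,\cdot)$ repeatedly (using $U(a,a)=a$) yields a sequence that must stabilize, and continuity/monotonicity of the one-variable sections forces $w$ to satisfy $U(a,w)=w$, which — together with $U(a,x)\ge U(a,w')$ for all $a\le w'\le x$ — contradicts $w<x$ unless $w=a$ or $w=x$.

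The cleanest route, and the one I would actually write, is to invoke the structure already available: since $U\in\mathcal{U}$, Propositions~\ref{sumstrp} and~\ref{pron} (and the surrounding discussion) describe $U$ around an idempotent point as an ordinal sum whose "outer" behaviour at an idempotent boundary is $\min$ or $\max$; then Corollary~\ref{lemass} (the internal-associativity corollary) lets me conclude that $U(a,\cdot)$ agrees with $\min(a,\cdot)$ on $\cint{0,a}$ and with $\max(a,\cdot)$ on $\cint{a,1}$, hence $U(a,x)\in\{a,x\}$ everywhere. In particular the key technical input is that a continuous t-norm (t-conorm) is $\min$ ($\max$) in the slice through any of its idempotent points, which is standard ordinal-sum theory (Proposition on ordinal sums above).

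\textbf{Main obstacle.} The delicate point is not either pure side — those follow from the classical ordinal-sum description of continuous t-norms/t-conorms — but the mixed region $\cint{0,e}\times\cint{e,1}$, where $U$ need not be continuous and can be genuinely two-valued ($\min$ or $\max$) on an idempotent slice only if that slice is constant-valued; I expect the real work is to rule out an "intermediate" value $a<U(a,x)<x$ there, which I would do by the fixed-point/associativity argument sketched above, using idempotency of $a$ to show $u_a\circ u_a = u_a$ and then exploiting monotonicity to collapse the image of $u_a$ to $\{a\}\cup\rint{e,1}$-type sets. Everything else is bookkeeping over the cases $a<e$, $a=e$, $a>e$.
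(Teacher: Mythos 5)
First, a point of reference: the paper does not prove this lemma at all --- it is explicitly recalled from \cite{mulfun} without proof --- so there is no in-paper argument to compare yours against; I can only judge your plan on its own merits. Your decomposition is the right one: the pure regions $\{a\}\times\cint{0,e}$ and $\{a\}\times\cint{e,1}$ (for $a\le e$, say) follow from the standard fact that a continuous t-norm satisfies $T(t,x)=\min(t,x)$ at any idempotent $t$, and the mixed region is correctly identified as the only real issue. Your observation that $w:=U(a,x)$ satisfies $U(a,w)=w$ (from $U(a,a,x)=U(a,x)$) is also sound, and it does dispose of the case $w\le e$: there $U(a,w)=\min(a,w)=a$, forcing $w=a$.

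The genuine gap is the remaining case $e<w<x$. Your stated mechanism --- that the fixed-point property $u_a\circ u_a=u_a$ together with monotonicity ``contradicts $w<x$ unless $w=a$ or $w=x$'' --- is a non sequitur: a nondecreasing idempotent self-map of $\cint{c,d}$ fixing the endpoints can perfectly well send $x$ to an intermediate fixed point $w$, so nothing so far excludes $e<w<x$. What actually excludes it is the Archimedean structure of $C_U$ between two consecutive idempotents $c<x<d$: one must interact $u_a$ with the $C_U$-powers of $x$ (as in Lemma~\ref{lemful} of this paper, where $U(\underbrace{x,\ldots,x}_{n})$ eventually crosses any prescribed level, or equivalently via the additive-generator functional equation $\phi(s+t)=\phi(s)+t$), and the nilpotent case needs separate care. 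Your alternative ``cleanest route'' via Propositions~\ref{sumstrp} and~\ref{pron} is also not immediate: applying them requires first restricting $U$ to a set of the form $(\lint{a',b'}\cup\{U(b',c)\}\cup\rint{c,d})^2$ and knowing that this set is closed under $U$ (Proposition~\ref{restr}), whose proof already uses $U(b',c)\in\{b',c\}$ for idempotents --- a special case of the very statement being proved (that special case is easy, but you would need to prove it first and say so). So the skeleton is right, but the one step you yourself flag as ``the real work'' is exactly the step your plan does not actually supply.
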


Now we can define an extended ordinal sum of uninorms. Although the construction seems to be quite complicated, since all sets have to be properly defined, in fact it is quite simple. We will see that extended ordinal sum  and  ordinal sum  can differ only in the points from $\{a\}\times \uint$
($ \uint \times \{a\}$), for all idempotent elements $a$  of the corresponding  (extended) ordinal sum $U.$ Since in both cases
$U(a,x)\in \{a,x\}$ for all $x\in \uint,$ the difference between these two constructions is determined the point where on $\{a\}\times \uint$ the $\min$
will change to $\max.$ 
If we focus on points from $\{a_k,b_k,c_k,d_k\}_{k\in K}$ as possible points of change, due to the monotonicity  the difference between these two constructions  can occur only in the case when $a_k=b_k=a$ or $c_k=d_k=a$ for some
$k\in K.$ Further, from Lemmas \ref{lem2} and \ref{lem2a} we see that
the possible points for this change are influenced also by the existence of nilpotent elements, i.e., such $x_1,x_2\in \opint{0,1}$ where
$U(x_1,x_2)\in \{0,1\}.$

First we define a c-strict (composite strict) and c-nilpotent  t-norm and t-conorm.

\begin{definition}
A continuous t-norm   $T\colon \uint^2\lran \uint$ (t-conorm   $C\colon \uint^2\lran \uint$) will be called
\emph{c-strict} if $T(x,y)\in \opint{0,1}$ ($C(x,y)\in \opint{0,1}$ ) for all $(x,y)\in \opint{0,1}^2.$ In the other case $T$ ($C$) will be called \emph{c-nilpotent}.
\end{definition}

\begin{proposition}
Let $U^e\colon \uint^2\lran \uint$  be a  uninorm such that $U^e=(\langle a_k,b_k,c_k,d_k,U_k \rangle \mid k\in K)^e,$ where all conditions of Proposition \ref{proorduni} are satisfied. Denote $G=\{b_k \mid k\in K, a_k=b_k\neq e , U(b_k,c_k)=b_k\},$ $H=\{c_k \mid k\in K, c_k=d_k\neq e, U(b_k,c_k)=c_k \},$
and for   $x\in G$ denote
$G_x=\{k\in K \mid b_k=x\},$ for $x\in H$ denote $H_x=\{k\in K \mid c_k=x\}.$
Let $G_x^{**}$ be the closure of the set $\{c_k\mid k\in G_x\}$
and denote $G_x^*=G_x^{**} \setminus \{d_i\}_{i\in K},$ and  let $H_x^{**}$ be the closure of the set $\{b_k\mid k\in H_x\}$
and denote $H_x^*=H_x^{**} \setminus \{a_i\}_{i\in K}.$
 Further, for $k\in G_x,$ $x\in G$ denote
$$F_k=\begin{cases}\{ \{c_k\},\lint{c_k,d_k},\cint{c_k,d_k}\} &\text{if $C_{U_k}$ is c-strict,} \\
\{ \{c_k\},\cint{c_k,d_k}\} &\text{if $C_{U_k}$ is c-nilpotent,} \end{cases}$$
for $c\in G_x^*$ denote
$$F^*_c=\begin{cases} \{\cint{0,c}\} &\text{if $c=\inf\{c_k\mid k\in G_x\},$} \\\{\lint{0,c},\cint{0,c}\} &\text{else,}\end{cases}$$
and for  $k\in H_x,$ $x\in H$ denote
$$J_k=\begin{cases}\{\emptyset, \{a_k\},\lint{a_k,b_k}\} &\text{if $T_{U_k}$ is c-strict,} \\
\{\emptyset, \lint{a_k,b_k}\} &\text{if $T_{U_k}$ is c-nilpotent,} \end{cases}$$
and for $c\in H_x^*$ denote
$$J^*_b=\begin{cases} \{\lint{0,b}\} &\text{if $b=\sup\{b_k\mid k\in H_x\},$} \\ \{\lint{0,b},\cint{0,b}\} &\text{else.}\end{cases} $$

With the convention $S\cup \{S_1,S_2\}=\{S\cup S_1,S\cup S_2\}$  let
$g\colon G\lran \bigcup\limits_{x\in G}(\bigcup\limits_{k\in G_x }(\lint{0,c_k}\cup F_k) \cup \bigcup\limits_{c\in G_x^*}F^*_c)$ be a function such that
$g(x)\in \bigcup\limits_{k\in G_x }(\lint{0,c_k}\cup F_k)\cup \bigcup\limits_{c\in G_x^*}F^*_c$  and let $h\colon H\lran \bigcup\limits_{x\in H}(\bigcup\limits_{k\in H_x }(\lint{0,a_k}\cup J_k)\cup \bigcup\limits_{b\in H_x^*}J^*_b)$ be a function such that
$h(x)\in \bigcup\limits_{k\in H_x }(\lint{0,a_k}\cup J_k)\cup \bigcup\limits_{b\in H_x^*}J^*_b.$

Then the binary function $V^e\colon \uint^2\lran \uint$ given by
$$V^e(x,y)=\begin{cases} U^e(x,y) &\text{ if $(x,y)\in (\uint\setminus (G\cup H))^2,$} \\
\min(x,y) &\text{ if $x\in G,$ $y\in g(x),$ or $y\in G,$ $x\in g(y),$ } \\
\max(x,y) &\text{ if $x\in G,$ $y\notin g(x),$ or $y\in G,$ $x\notin g(y),$} \\
\min(x,y) &\text{ if $x\in H,$ $y\in h(x),$ or $y\in H,$ $x\in h(y),$} \\
\max(x,y) &\text{ if $x\in H,$ $y\notin h(x),$ or $y\in H,$ $x\notin h(y)$} \\
 \end{cases}$$ is a uninorm, which will be called an extended ordinal sum of uninorms.
   We write $V^e=(\langle a_k,b_k,c_k,d_k,U_k \rangle \mid k\in K)^e.$
      Further,  $V^e\in \mathcal{U}$ if and only if $U^e\in \mathcal{U}.$
    \label{proext}
\end{proposition}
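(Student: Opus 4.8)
The plan is to verify the four uninorm axioms for $V^e$ by leaning on the fact (already established) that $U^e$ is a uninorm, together with the observation that $V^e$ and $U^e$ differ only on the ``idempotent rows and columns'' $\{x\}\times\uint$ and $\uint\times\{x\}$ with $x\in G\cup H$. First I would record three structural facts.
\emph{(a)} Every $x\in G$ and every $x\in H$ is an idempotent point of $U^e$: for $x=b_k\in G$ the summand has empty left interval, so its Clifford block is $\{U^e(b_k,c_k)\}\cup\rint{c_k,d_k}=\{b_k\}\cup\rint{c_k,d_k}$ with $b_k$ the image of $U_k$'s neutral element, whence $U^e(b_k,b_k)=b_k$ (and similarly $U^e(c_k,c_k)=c_k$ for $x=c_k\in H$). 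By the remark recalled above on idempotent elements of an ordinal-sum uninorm, $U^e$ is internal on $\{x\}\times\uint$ for every $x\in G\cup H$, and by construction $V^e$ is too.
\emph{(b)} By the convention $S\cup\{S_1,S_2\}=\{S\cup S_1,S\cup S_2\}$, every admissible value $g(x)$ is a down-set $\cint{0,t}$ or $\lint{0,t}$ with $t\ge e$, and in fact a short inspection of the sets $F_k,F^*_c$ gives $\cint{0,e}\subseteq g(x)$; symmetrically every admissible $h(x)$ is such a down-set and satisfies $h(x)\subseteq\lint{0,e}$. Hence on a modified row $V^e(x,\cdot)$ is internal and switches once, from $\min$ to $\max$, at the top of $g(x)$ (resp.\ $h(x)$).
\emph{(c)} Off the rows and columns through $G\cup H$ one has $V^e=U^e$.
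Commutativity of $V^e$ is then immediate from the symmetry of its definition and of the data $G,H,g,h$.

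For the neutral element, $e\notin G\cup H$ by the conditions $b_k\ne e$ and $c_k\ne e$ in the definitions of $G$ and $H$; and by \emph{(b)} we get $V^e(x,e)=\min(x,e)=x$ when $x\in G$, $V^e(x,e)=\max(x,e)=x$ when $x\in H$ (using $x<e$ resp.\ $x>e$), and $V^e(x,e)=U^e(x,e)=x$ otherwise. Monotonicity along a single modified row is clear since $g(x),h(x)$ are down-sets. Monotonicity \emph{across} rows is what the precise definitions of $F_k,F^*_c,J_k,J^*_b$ are designed to guarantee: comparing with the monotone $U^e$, one must check that replacing, on a row $x\in G$, the internal value $U^e(x,y)$ by another admissible internal value produces no descent when one passes to a neighbouring row. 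This uses the anti-comonotonicity of the two interval systems to locate the blocks, and the fact that the ``mid-block'' placement $\lint{c_k,d_k}$ is admitted only when $C_{U_k}$ is c-strict (and $\lint{a_k,b_k}$ inside $J_k$ only when $T_{U_k}$ is c-strict) --- the local form of Lemmas \ref{lem2} and \ref{lem2a}.

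Associativity is the main obstacle. The first reduction is Corollary \ref{lemass}: whenever each of $V^e(x,y)$, $V^e(y,z)$, $V^e(x,z)$ is internal --- in particular whenever at least two of $x,y,z$ lie in $G\cup H$ --- the identity $V^e(V^e(x,y),z)=V^e(V^e(x,z),y)=V^e(x,V^e(y,z))$ holds automatically. It then remains to treat a triple with, say, $V^e(x,y)\notin\{x,y\}$; then $x,y\notin G\cup H$, so $V^e(x,y)=U^e(x,y)=:w$, and one must match $V^e(w,z)$ with $V^e(x,V^e(y,z))$. I would split according to whether $w$ and $z$ lie in $G\cup H$; in each sub-case both associands are rewritten --- via $V^e=U^e$ off the idempotent rows and the associativity of $U^e$ (itself resting on Theorem \ref{thcli}) --- as a single $U^e$-value corrected by the same min/max re-routing on at most one idempotent row. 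The delicate case is $w=U^e(x,y)\in G$ (or $H$) with $x,y\notin G\cup H$, where $V^e(w,z)$ may genuinely differ from $U^e(w,z)$; I expect to resolve it by showing that the discrepancy on row $w$ sits inside a block which $U^e$ already treats as $\min$ (resp.\ $\max$) --- once more via the c-strict/c-nilpotent dichotomy and the anti-comonotone ordering --- so that the matching value on the $V^e(x,V^e(y,z))$ side is re-routed in exactly the same way. A cleaner alternative would be to exhibit $V^e$ itself as an ordinal sum of semigroups in Clifford's sense: the only freedom in decomposing $U^e$ that way lies in choosing, on each idempotent row $\{a\}\times\uint$, the point at which the semigroup touching $a$ from below hands over to the one touching it from above, and the choice encoded by $g,h$ is exactly one such admissible split, so that associativity would follow from Theorem \ref{thcli}.

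Finally, for $V^e\in\mathcal U\iff U^e\in\mathcal U$ it suffices to show $V^e=U^e$ on $\cint{0,e}^2$ and on $\cint{e,1}^2$, since then $T_{V^e}=T_{U^e}$ and $C_{V^e}=C_{U^e}$. On $\cint{0,e}^2$ the only rows on which $V^e$ and $U^e$ could differ are $\{x\}\times\cint{0,e}$ with $x\in G$; there $V^e(x,y)=\min(x,y)$ because $\cint{0,e}\subseteq g(x)$ by \emph{(b)}, and a direct inspection of the clauses of the ordinal-sum formula gives $U^e(x,y)=\min(x,y)$ as well (for $x=b_k$ with $\opint{a_k,b_k}$ empty and $y\le e$, every clause that can fire evaluates to $\min$, reflecting that $x$ is an idempotent point sitting on a block boundary on the $\cint{0,e}$-side). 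Symmetrically $V^e=U^e$ on $\cint{e,1}^2$. Hence $T_{V^e},C_{V^e}$ are continuous precisely when $T_{U^e},C_{U^e}$ are, which is the asserted equivalence.
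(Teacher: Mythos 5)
Your skeleton coincides with the paper's own proof: internality of $V^e$ on the rows through $G\cup H$, the observation that every admissible $g(x)$ is a down-set containing $\cint{0,e}$ and every admissible $h(x)$ a down-set contained in $\lint{0,e}$ (which settles the neutral element and row-wise monotonicity), the reduction of associativity via Corollary \ref{lemass} to triples with at most one coordinate outside internal rows, and the identity $T_{V^e}=T_{U^e}$, $C_{V^e}=C_{U^e}$ for the final equivalence. The difficulty is that the two places where you write ``one must check'' and ``I expect to resolve it'' are exactly the places where the paper does its real work, and you leave both unexecuted.

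Concretely: for monotonicity of the section $v_x$ with $x\notin G\cup H$, $x<e$, the paper locates where $U^e(x,y)\neq V^e(x,y)$ can occur at a point $y\in H$ (namely $x\in\lint{A_1,A_2}$ with $A_1=\inf\{a_k\mid c_k=y\}$ and $A_2=\sup\{b_k\mid c_k=y\}$) and verifies in each subcase that $U^e(x,z)=z$ for all $z>y$, so replacing the value $x$ by $y$ at that single point cannot create a descent. For associativity with $x,y\notin G\cup H$, $z\in G\cup H$ and $V^e(x,y)\notin\{x,y\}$, the paper splits into $V^e(x,y)<\min(x,y)$, $V^e(x,y)>\max(x,y)$ and $V^e(x,y)\in\opint{x,y}$; in the first case, if $V^e(x,y)$ equals the idempotent $a$ bounding the block containing $x,y$, then $U^e$ restricted to $\cint{a,b}^2$ is a nilpotent t-norm and $V^e(a,z)=z$ for $z\in\rint{e,1}$ --- this is precisely the resolution of your ``delicate case'' --- while in the third case the relative order of $y$ and $z$ yields a contradiction with $x=U^e(x,z)\neq V^e(x,z)=z$. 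Your alternative suggestion of exhibiting $V^e$ directly as a Clifford ordinal sum, with the handover point on each idempotent row chosen by $g,h$, is plausible and arguably cleaner, but as stated it is also only a plan: one must still verify that every choice permitted by $F_k$, $F^*_c$, $J_k$, $J^*_b$ yields a genuine Clifford decomposition (this is where the c-strict/c-nilpotent dichotomy enters, in the spirit of Lemmas \ref{lem2} and \ref{lem2a}), and that verification amounts to the same case analysis. So the approach is right and the obstacles are correctly identified, but the proof is not complete as written.
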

The proof of this result can be found in Appendix.

It is evident that ordinal sum of uninorms is a special case of extended ordinal sum of uninorms.

\begin{example}
Assume an  ordinal sum $U^e=(\langle 0,e,e,e,T \rangle , \langle 0,0,e,b,C_1 \rangle, \langle 0,0,b,1,C_2 \rangle )^e,$ for some
$b,e \in \uint,$ $0<e<b<1$ (see Figure \ref{figexo}) and a t-norm $T$ and t-conorms $C_1,$ $C_2.$
Assume that $C_1$ is c-strict and $C_2$ is c-nilpotent.
 Since $T$ is a t-norm we have
$U^e(0,e)=0$ and thus we have $G=\{0\},$ $H=\emptyset$ for  sets $G,H$ from the previous proposition.  If we denote the summands respectively as
$1,2,3$ for $K=\{1,2,3\}$ we get $G_0=\{2,3\}.$ Further, $G_x^{**}=\{e,b\},$ $G_x^*=\emptyset,$
$$F_2 = \{\{e\},\lint{e,b},\cint{e,b}\},$$ $$F_3 = \{\{b\},\cint{b,1}\}.$$ The function $g$ is defined only in one point $0$ and its range is the set $$\{\cint{0,e},\lint{0,b},\cint{0,b},\cint{0,1}\}.$$
Thus if $V^e=(\langle 0,e,e,e,T \rangle , \langle 0,0,e,b,C_1 \rangle, \langle 0,0,b,1,C_2 \rangle )^e,$ is an extended ordinal sum
we have $V^e=U^e$ if $g(0) = \cint{0,e}.$ Further we can define three other different extended ordinal sums by respectively selecting a different value/interval for $g(0).$ It is evident that $V^e$ and $U^e$ may differ only on $\{0\}\times \uint \cup \uint \times \{0\}.$
Sketch of a more complicated example can be seen on Figure \ref{figcomx}.
\label{exexo}
\end{example}

\begin{figure}[htb] \begin{center}
\begin{picture}(150,150)
\put(0,0){ \line(1,0){150} }
\put(0,0){ \line(0,1){150} }
\put(150,150){ \line(-1,0){150} }
\put(150,150){ \line(0,-1){150} }

\put(0,50){ \line(1,0){150} }
\put(50,0){ \line(0,1){150} }
\put(0,100){ \line(1,0){150} }
\put(100,0){ \line(0,1){150} }

\put(70,70){$C_1^*$}
\put(20,20){$T^*$}
\put(120,120){$C_2^*$}
\put(70,120){$\max$}
\put(120,70){$\max$}
\put(70,20){$\max$}
\put(20,70){$\max$}
\put(120,20){$\max$}
\put(20,120){$\max$}

\end{picture} \end{center}
\caption{The ordinal sum uninorm $U^{e}$ from Example \ref{exexo}.} \label{figexo}
\end{figure}

\begin{figure}[htb] \begin{center}
\begin{picture}(200,200)
\put(0,0){ \line(1,0){200} }
\put(0,0){ \line(0,1){200} }
\put(200,200){ \line(-1,0){200} }
\put(200,200){ \line(0,-1){200} }
\put(5,8){{\tiny $m+1$}}
\put(5,190){{\tiny $m+1$}}
\put(185,190){{\tiny $m+1$}}
\put(185,8){{\tiny $m+1$}}
\put(0,20){ \line(1,0){200} }
\put(20,0){ \line(0,1){200} }
\put(200,180){ \line(-1,0){200} }
\put(180,200){ \line(0,-1){200} }
\put(100,100){ \line(-1,0){80} }
\put(100,100){ \line(0,-1){80} }
\put(55,55){{ $1$}}
\put(100,100){ \line(1,0){20} }
\put(100,100){ \line(0,1){20} }
\put(120,120){ \line(-1,0){20} }
\put(120,120){ \line(0,-1){20} }
\put(107,107){{ $2$}}
\put(120,120){ \line(1,0){10} }
\put(120,120){ \line(0,1){10} }
\put(130,130){ \line(-1,0){10} }
\put(130,130){ \line(0,-1){10} }
\put(123,121){{ $3$}}
\put(130,130){ \line(1,0){20} }
\put(130,130){ \line(0,1){20} }
\put(150,150){ \line(-1,0){20} }
\put(150,150){ \line(0,-1){20} }
\put(138,136){{ $4$}}
\put(166,166){{ $m$}}
\put(156,152){\circle*{2}}
\put(159,155){\circle*{2}}
\put(162,158){\circle*{2}}
\put(160,160){ \line(1,0){20} }
\put(160,160){ \line(0,1){20} }
\put(180,180){ \line(-1,0){20} }
\put(180,180){ \line(0,-1){20} }
\put(184,100){$\max$}
\put(5,90){$\min$}
\put(100,184){$\max$}
\put(90,4){$\min$}
\put(60,140){$\max$}
\put(140,60){$\max$}
\put(25,140){\oval(20,80)}
\put(144,20){\oval(80,20)}

\end{picture}
\end{center}
\caption{Sketch of a uninorm which is an  ordinal sum with $m+1$ summands. The summands $1$ and $m+1$ are complete, the others are non-complete. The rounded area (the line in the center) designated the place where the ordinal sum construction and the extended ordinal sum construction can differ. }
\label{figcomx}
\end{figure}
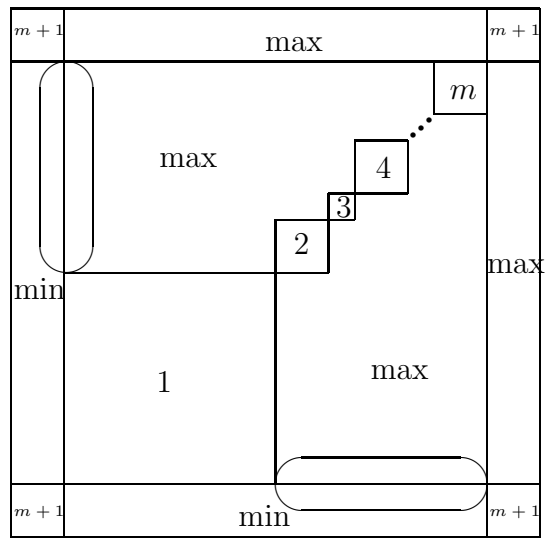

\begin{remark}
As we mentioned above, if  $U^e=(\langle a_k,b_k,c_k,d_k,U_k \rangle \mid k\in K)^e$ is an ordinal sum  and $V^e=(\langle a_k,b_k,c_k,d_k,U_k \rangle \mid k\in K)^e$ is an extended ordinal sum and $U^e$ differs from $V^e$ in the point $(x,y)$ then
$U^e(x,y)\in \{x,y\}$ and $U^e(x,y) +V^e(x,y) = x+y.$ Since $V^e$  and $U^e$ are non-decreasing then if $(x,y)\in \opint{0,1}^2$  necessarily 
the point $(x,y)$ is a point of discontinuity of both $U^e$ and $V^e.$ Thus $(x,y)$ belongs to the graph of the characterizing multi-functions of
both $V^e$  and $U^e.$ Similar result can be shown for $(x,y) \in \uint^2,$ $(x,y)\notin \opint{0,1}^2.$
\end{remark}

Since ordinal sum of t-norms (t-conorms) is continuous if and only if all its summands are continuous we easily get the following.

\begin{proposition}
Let $V^e=(\langle a_k,b_k,c_k,d_k,U_k \rangle \mid k\in K)^e$ be an extended ordinal sum of uninorms such that for all $k\in K$
we have $U_k \in \mathcal{U}.$ Then $V^e\in \mathcal{U}.$
\end{proposition}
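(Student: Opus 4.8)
The statement to prove is: if $V^e=(\langle a_k,b_k,c_k,d_k,U_k \rangle \mid k\in K)^e$ is an extended ordinal sum of uninorms with every summand $U_k\in\mathcal U$ (i.e.\ $T_{U_k}$ and $C_{U_k}$ continuous), then $V^e\in\mathcal U$ (i.e.\ $T_{V^e}$ and $C_{V^e}$ are continuous). My plan is to first record that $V^e$ is genuinely a uninorm by Proposition~\ref{proext}, and that by the last sentence of that proposition $V^e\in\mathcal U$ if and only if the underlying ordinary ordinal sum $U^e=(\langle a_k,b_k,c_k,d_k,U_k \rangle \mid k\in K)^e$ satisfies $U^e\in\mathcal U$. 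So the whole problem reduces to showing $U^e\in\mathcal U$, i.e.\ it suffices to treat the ordinary ordinal sum of Proposition~\ref{proorduni}.

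Next I would identify $T_{U^e}$ and $C_{U^e}$ explicitly. By the general fact recalled in the preliminaries, the restriction of $U^e$ to $\cint{0,e}^2$ is a linear rescaling of $T_{U^e}$; by the defining formula in Proposition~\ref{proorduni}, on $\cint{0,e}^2$ the uninorm $U^e$ acts, on each block $\lint{a_k,b_k}$, as the $\cint{0,e}$-rescaling of $T_{U_k}$ (via the part of $(U_k)^{a_k,b_k,c_k,d_k}_{v_k}$ living on $\lint{a_k,b_k}^2$, which by construction \eqref{utra}--\eqref{unitr} is exactly a linear image of $T_{U_k}$), and as $\min$ off those blocks. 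Hence $T_{U^e}$ is precisely the t-norm ordinal sum $(\langle \frac{a_k}{e},\frac{b_k}{e},T_{U_k}\rangle \mid k\in K)$ in the sense of the ordinal-sum Proposition for t-norms. Symmetrically, $C_{U^e}$ is the t-conorm ordinal sum of the $C_{U_k}$ on the rescaled intervals $\cint{\frac{c_k-e}{1-e},\frac{d_k-e}{1-e}}$. Then I invoke the cited fact that an ordinal sum of t-norms (t-conorms) is continuous if and only if all its summands are; since each $T_{U_k}$ and each $C_{U_k}$ is continuous by hypothesis $U_k\in\mathcal U$, both $T_{U^e}$ and $C_{U^e}$ are continuous, so $U^e\in\mathcal U$, and therefore $V^e\in\mathcal U$.

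The one point that needs a little care — the place I expect to be the main obstacle — is checking that the ordinal-sum blocks one reads off from Proposition~\ref{proorduni} really do line up into a legitimate ordinal-sum family for t-norms/t-conorms in the sense of the earlier Proposition: the intervals $\opint{a_k,b_k}$ (resp.\ $\opint{c_k,d_k}$) must be a disjoint system of open subintervals of $\cint{0,1}$ after rescaling, and the "else" part of $U^e$ on $\cint{0,e}^2$ must genuinely be $\min$ (resp.\ $\max$ on $\cint{e,1}^2$), including on the boundary points $b_k$, $c_k$ and on the accumulation-point fibres indexed by $B$ and $C$. This is exactly what the case analysis in the definition of $U^e$ guarantees: on $\cint{0,e}^2$ every branch other than the $(U_k)^{\ldots}$ branch evaluates to $x$, $y$, or $\min(x,y)$ in a way consistent with $\min$ (and dually $\max$ on $\cint{e,1}^2$), so no contradiction with the t-norm ordinal-sum format arises. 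Once that bookkeeping is in place the continuity conclusion is immediate from the quoted t-norm/t-conorm ordinal-sum continuity criterion, and the equivalence $V^e\in\mathcal U\iff U^e\in\mathcal U$ from Proposition~\ref{proext} finishes the argument. I would keep the written proof to a couple of sentences, citing Proposition~\ref{proext}, the identification of $T_{U^e}$ and $C_{U^e}$ as t-norm/t-conorm ordinal sums, and the continuity criterion for those.
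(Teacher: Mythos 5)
Your argument is correct and is essentially the paper's own (the paper derives this proposition in one line from the fact that an ordinal sum of t-norms/t-conorms is continuous iff all summands are, having already noted in the proof of Proposition~\ref{proext} that $T_{V^e}=T_{U^e}$ and $C_{V^e}=C_{U^e}$ because $U^e$ and $V^e$ differ only on $\lint{0,e}\times\rint{e,1}\cup\rint{e,1}\times\lint{0,e}$). Your identification of $T_{U^e}$ and $C_{U^e}$ as the rescaled ordinal sums of the $T_{U_k}$ and $C_{U_k}$ is exactly the intended bookkeeping.
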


For the opposite claim we should exclude the cases when, for example, only the underlying t-norm is continuous, however, if the summand is not complete and only the t-norm part is employed still the resulting extended ordinal sum can belong to $\mathcal{U}.$

\begin{proposition}
Let $V^e=(\langle a_k,b_k,c_k,d_k,U_k \rangle \mid k\in K)^e$ be an extended ordinal sum of uninorms such that all summands are totally employed.
Then if $V^e\in \mathcal{U}$ also  $U_k \in \mathcal{U}$ for all $k\in K.$
\end{proposition}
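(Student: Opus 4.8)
The plan is to contrapose the decomposition: from the global assumption $V^e\in\mathcal{U}$ and from "totally employed" we will recover, for each fixed $k\in K$, the continuity of both $T_{U_k}$ and $C_{U_k}$ by locating inside $V^e$ an isomorphic copy of (the relevant part of) $U_k$ and using the fact that the underlying t-norm and t-conorm of a uninorm are obtained by restricting to $\cint{0,e}^2$ and $\cint{e,1}^2$ respectively. First I would fix $k\in K$ and split into the three cases of the definition of totally employed. If $U_k$ is a proper uninorm and the summand is complete, then by \eqref{unitr} the function $(U_k)^{a_k,b_k,c_k,d_k}_{v_k}$ is, up to the piecewise-linear isomorphism $f$ of \eqref{utra}, exactly $U_k$; moreover on the square $(\lint{a_k,b_k}\cup\rint{c_k,d_k})^2$ the extended ordinal sum $V^e$ agrees with $U^e$, which agrees with $(U_k)^{a_k,b_k,c_k,d_k}_{v_k}$ (the extended construction modifies $U^e$ only on fibres $\{a\}\times\uint$ over idempotent points $a$, and the relevant idempotents of $V^e$ inside $\cint{a_k,d_k}$ are exactly $b_k,c_k$, which here are interior and carry no change because the summand is complete — I would invoke the Remark after Proposition~\ref{proext} and the discussion preceding it). Hence $V^e$ restricted to $(\lint{a_k,b_k}\cup\rint{c_k,d_k})^2$ is isomorphic to $U_k$.

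Next I would extract continuity of the underlying operations from that restriction. The neutral element of $(U_k)^{a_k,b_k,c_k,d_k}_{v_k}$ is $v_k\in\cint{b_k,c_k}$, and by construction $f$ maps $\cint{0,e_k}$ linearly onto $\cint{a_k,b_k}\cup\{v_k\}$-type blocks; concretely $(U_k)^{\cdots}$ restricted to $\lint{a_k,b_k}^2$ (together with the point $v_k$) is a linear rescaling of the restriction of $U_k$ to $\cint{0,e_k}^2$, i.e. of a linear transform of $T_{U_k}$, and similarly on the $\rint{c_k,d_k}$ side for $C_{U_k}$. Since $V^e\in\mathcal{U}$ means $T_{V^e}$ and $C_{V^e}$ are continuous, and since $\cint{a_k,b_k}$ and $\cint{c_k,d_k}$ are intervals sitting inside $\cint{0,e}$ and $\cint{e,1}$ respectively on which $V^e$ coincides with an ordinal-sum summand of the underlying t-norm/t-conorm, the corresponding blocks of $T_{V^e}$ and $C_{V^e}$ are continuous; pulling back through the affine maps, $T_{U_k}$ and $C_{U_k}$ are continuous, i.e. $U_k\in\mathcal{U}$. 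For the second case ($U_k$ a t-norm with $a_k\neq b_k$, $c_k=d_k$) only the $\cint{0,e}$ side is employed: the summand contributes, up to affine rescaling, the restriction of $U_k$ to $\lint{a_k,b_k}^2$, which is $T_{U_k}$ rescaled; continuity of $T_{V^e}$ on that block forces $T_{U_k}$ continuous, and there is no $C_{U_k}$ to worry about (or $C_{U_k}=\max$, trivially continuous). The third case is symmetric.

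The main obstacle I anticipate is the bookkeeping at the block endpoints $b_k,c_k$ and the interaction with the extended construction: I must argue carefully that the modification from $U^e$ to $V^e$ on the fibres over idempotents does not destroy the isomorphic copy of $U_k$ on $(\lint{a_k,b_k}\cup\rint{c_k,d_k})^2$ — in particular that when $a_k=b_k$ (the t-conorm case, or an incomplete $b_k$-endpoint) the possible $\min/\max$ switch on $\{b_k\}\times\cint{b_k,c_k}$ prescribed by the sets $G,H$ and the functions $g,h$ does not interfere with the values on $\rint{c_k,d_k}^2$ that encode $C_{U_k}$. This is handled by noting that those switches occur only on the one-dimensional fibres over the idempotents $b_k\in G$, $c_k\in H$, never on the two-dimensional interior blocks $\lint{a_k,b_k}^2$, $\opint{c_k,d_k}^2$, so the Archimedean component carrying $T_{U_k}$ or $C_{U_k}$ is left intact; the "totally employed" hypothesis is exactly what guarantees that this two-dimensional block is genuinely present. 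Once this is pinned down, the continuity transfer through the affine isomorphisms is routine, and assembling the three cases over all $k\in K$ completes the argument.
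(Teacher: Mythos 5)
Your argument is correct and is essentially the one the paper intends: the paper states this proposition without an explicit proof, relying on the preceding observation that an ordinal sum of t-norms (t-conorms) is continuous iff all its summands are, and your proposal is exactly the natural expansion of that — locate the rescaled $T_{U_k}$ and $C_{U_k}$ blocks inside $T_{V^e}$ and $C_{V^e}$ (which the "totally employed" hypothesis guarantees are genuinely present), note that the extended construction alters $U^e$ only on $\lint{0,e}\times\rint{e,1}\cup\rint{e,1}\times\lint{0,e}$ so that $T_{V^e}=T_{U^e}$ and $C_{V^e}=C_{U^e}$, and pull continuity back through the affine isomorphisms. Your handling of the endpoint/fibre bookkeeping is consistent with the appendix proof of Proposition~\ref{proext}, so no gap.
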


\section{Uninorms  with continuous underlying functions}

In this section we will examine a decomposition of a uninorm $U\in \mathcal{U}$ into representable uninorms with respect to the extended ordinal sum construction. First we recall some useful results from \cite{mulfun}.

\begin{definition}
A mapping $p\colon X\lran \mathcal{P}(Y)$ is called a multi-function if to every $x\in X$ it assigns a subset of $Y,$ i.e.,
$p(x)\subseteq Y.$ A multi-function $p$ is called

\begin{mylist}
\item \emph{non-increasing} if  for all $x_1,x_2\in X,$ $x_1<x_2$ there is $p(x_1)\geq p(x_2),$ i.e, for all $y_1\in p(x_1)$ and all $y_2\in p(x_2)$ we have
$y_1\geq y_2$ and thus $\mathrm{Card}(p(x_1)\cap p(x_2))\leq 1,$
\item  \emph{symmetric} if $y\in p(x)$ if and only if $x\in p(y).$
\end{mylist}
 The graph of a multi-function $p$ will be denoted by $G(p),$ i.e., $(x,y)\in G(p)$ if and only if
$y\in p(x).$
\end{definition}

\begin{lemma}
A symmetric multi-function $p\colon \uint\lran \uint$ is surjective, i.e.,  for all $y\in Y$ there exists an $x\in X$ such that $y\in p(x),$ if and only if we have $p(x)\neq \emptyset$ for all $x\in X.$
 The graph of a symmetric, surjective,  non-increasing  multi-function $p\colon \uint \lran \uint$ is a connected line. \label{lemnone}
\end{lemma}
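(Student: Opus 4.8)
The statement to be proved is Lemma~\ref{lemnone}: a symmetric multi-function $p\colon\uint\lran\uint$ is surjective iff $p(x)\neq\emptyset$ for all $x$, and the graph of a symmetric, surjective, non-increasing multi-function is a connected line. The plan is to dispose of the equivalence first by a direct unwinding of the definitions, and then to establish connectedness of $G(p)$ by a careful argument exploiting monotonicity together with the filling-in that symmetry forces.

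For the first claim, one direction is trivial: if $p$ is surjective then in particular every $y$ lies in some $p(x)$, and by symmetry $x\in p(y)$, so $p(y)\neq\emptyset$; running this for every $y$ gives $p(x)\neq\emptyset$ for all $x$. Conversely, suppose $p(x)\neq\emptyset$ for all $x\in\uint$. Fix $y\in\uint$; pick any $z\in p(y)$ (possible since $p(y)\neq\emptyset$). By symmetry $y\in p(z)$, which exhibits $y$ as a value attained by $p$, i.e.\ $y\in\bigcup_{x}p(x)$. Hence $p$ is surjective. I would write this out in two or three lines.

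For the connectedness claim, first note that by the first part $p(x)\neq\emptyset$ for every $x$, so the graph $G(p)$ projects onto both axes. Non-increasingness gives the key order property: for $x_1<x_2$, every point of $p(x_1)$ lies at or above every point of $p(x_2)$, so $\sup p(x_2)\le\inf p(x_1)$ and consecutive fibres overlap in at most one point. Define $\varphi(x)=\inf p(x)$ and $\psi(x)=\sup p(x)$; these are non-increasing, and $p(x)\subseteq[\varphi(x),\psi(x)]$. The plan is to show that $G(p)$ is exactly the ``staircase-free'' curve one gets from the graph of a non-increasing function by inserting the vertical and horizontal segments that symmetry and surjectivity demand. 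Concretely: at a point $x_0$ where $p$ is ``single-valued'', $\varphi(x_0)=\psi(x_0)$; at a jump of the underlying monotone relation, symmetry forces the whole vertical segment $\{x_0\}\times[\varphi(x_0),\psi(x_0)]$ (resp.\ a horizontal segment) to lie in $G(p)$, because each $y$ in that range must have $x_0\in p(y)$ by surjectivity plus the sandwiching from monotonicity, hence $y\in p(x_0)$ by symmetry. One then checks there are no gaps: if $(x_0,y_0)\notin\overline{G(p)}$ were an isolated ``missing'' point between two branches, monotonicity of $\varphi,\psi$ and the symmetry-forced segments would contradict it. Thus $G(p)$ is the graph of a non-increasing ``curve with vertical pieces'', which is a connected subset of $\uint^2$ (it is the continuous image of an interval, or directly: it is chain-connected and closed under the order, hence connected).

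The main obstacle I expect is the rigorous verification that symmetry plus surjectivity really do fill in \emph{all} the horizontal and vertical segments needed for connectedness, with no leftover gaps — i.e.\ turning the intuitive ``staircase with filled risers and treads'' picture into an airtight argument. The delicate case is an accumulation of jumps (countably many fibres that are non-degenerate intervals), where one must argue that the closure of the union of the forced segments is still exactly $G(p)$ and is connected; here I would lean on the fact that a non-increasing multi-function has at most countably many non-degenerate fibres and that the complement of $G(p)$ in $\uint^2$ splits into an ``upper-left'' open region and a ``lower-right'' open region, so $G(p)$, being their common boundary, is connected. Everything else (the equivalence, the order estimates on $\varphi,\psi$) is routine.
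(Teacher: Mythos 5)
The paper itself contains no proof of this lemma --- it is recalled from the companion paper \cite{mulfun} --- so your proposal can only be judged on its own terms. The first claim is handled correctly and completely: both directions of the equivalence are one-line applications of symmetry, exactly as you write, and nothing more is needed there.

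The connectedness claim is where the real problem lies, and it is not just that you leave the ``no gaps'' step as an acknowledged obstacle: the filling-in mechanism you propose genuinely fails at the \emph{endpoints} of a jump. Your argument forces $y\in p(x_0)$ only when monotonicity pins the preimage of $y$ down to $x_0$ alone; for $y=\psi(x_0)=\sup p(x_0)$ this need not happen, because $y$ may already be attained by the fibres to the left of $x_0$, and then surjectivity gives you nothing. Concretely, take $p(x)=\{1\}$ for $x\in\lint{0,\frac12}$, $p(\frac12)=\lint{\frac12,1}$, $p(x)=\{\frac12\}$ for $x\in\opint{\frac12,1}$ and $p(1)=\lint{0,\frac12}$: this is symmetric, non-increasing and surjective with all fibres non-empty, yet its graph misses the corner points $(\frac12,1)$ and $(1,\frac12)$ and is disconnected. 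So the statement is true only under the additional hypothesis (implicit in the paper's application, cf.\ Remark \ref{remseg}, where all segments are closed intervals) that the fibres are closed, and a correct proof must either assume or first establish closedness of $G(p)$; your sketch never isolates this, and the ``delicate case'' you flag (accumulation of countably many jumps) is not where the argument actually breaks --- a single jump already does it. Two further cautions: the fallback claim that $G(p)$, being the common boundary of an upper-left and a lower-right open region, is ``hence connected'' is not a valid inference in general; and once closedness is in hand, the cleanest route is to note that $(x,y)\mapsto x-y$ is injective and continuous on $G(p)$ with image an interval, so that $G(p)$ is homeomorphic to an interval --- this gives connectedness and the segment decomposition in one stroke.
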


We will denote the set of all uninorms  $U\colon \uint^2\lran \uint$ such that  $U$ is continuous on
$\uint^2\setminus R,$ where $R=G(r)$ and $r$ is a symmetric,  surjective, non-increasing multi-function such that  $U(x,y)=e$ implies $(x,y)\in R,$
by $\mathcal{UR}.$

\begin{theorem}[\cite{mulfun}]
Let $U\colon \uint^2\lran \uint$ be a uninorm. Then $U\in \mathcal{U}$ if and only if $U\in \mathcal{UR}$ and  in each point $(x,y)\in \uint^2$ the uninorm $U$ is either left-continuous or right-continuous.
\end{theorem}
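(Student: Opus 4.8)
The plan is to prove the two directions separately, exploiting the two auxiliary notions packaged in the statement: membership in $\mathcal{UR}$ (continuity off the graph of a symmetric, surjective, non-increasing characterizing multi-function, with the level set $U^{-1}(e)$ contained in that graph), and the one-sided continuity at every point. The forward direction is the hard one; the converse is short.

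First I would handle the easy direction: assume $U\in\mathcal{UR}$ and that $U$ is one-sidedly continuous at every point, and show $U\in\mathcal{U}$, i.e.\ that $T_U$ and $C_U$ are continuous. Since $U\in\mathcal{UR}$, $U$ is continuous on $\uint^2\setminus R$ with $R=G(r)$; by Lemma \ref{lemnone} the set $R$ is a connected line running from the top-left to the bottom-right of the square, and since $U^{-1}(e)\subseteq R$ and $U$ restricted to $\cint{0,e}^2$ is a rescaling of $T_U$, the only possible discontinuities of $T_U$ lie along $R\cap\cint{0,e}^2$. But $r$ is non-increasing, so $R\cap\cint{0,e}^2$ meets each vertical line $\{x\}\times\cint{0,e}$ in at most one point; combined with one-sided continuity and monotonicity, a standard argument shows that a monotone function on an interval that is one-sidedly continuous everywhere and whose graph-of-discontinuities is contained in such a thin set cannot actually jump --- the neutral element $e$ being attained and the min/max sandwich bound forcing $T_U(x,1)=x$ pin the level curve down. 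The same reasoning on $\cint{e,1}^2$ gives continuity of $C_U$. Hence $U\in\mathcal{U}$.

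For the converse, assume $U\in\mathcal{U}$. I would first invoke the structural results already collected in the paper: by Propositions \ref{sumstrp}, \ref{pron} and the surrounding discussion, a uninorm in $\mathcal{U}$ is built (as an ordinal sum, in the sense of Clifford) from representable uninorms, continuous Archimedean t-norms, continuous Archimedean t-conorms and internal pieces. Each of these building blocks is continuous except along a thin non-increasing curve: a representable uninorm is continuous off $\{(0,1),(1,0)\}$ (Proposition \ref{prouni}); a continuous t-norm/t-conorm summand contributes discontinuities only where an outer $\min$/$\max$ meets an inner operation, again along a non-increasing line; an internal (indeed s-internal) uninorm is discontinuous precisely along the graph of its strictly decreasing curve $v_U$. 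Assembling these along the ordinal-sum skeleton, the union of all discontinuity curves is itself the graph of a symmetric, surjective, non-increasing multi-function $r$, and every point where $U=e$ lies on it (because $e$ is idempotent and $U$ is internal on $\{e\}\times\uint$). This gives $U\in\mathcal{UR}$. One-sided continuity at each point then follows because at any point of $R$ the value of $U$ is determined by $\min$ on one side of the curve and $\max$ on the other (or by a continuous Archimedean operation and an outer $\min$/$\max$), so approaching along the ``min side'' gives a left-continuous limit and along the ``max side'' a right-continuous limit; monotonicity forces one of the two to agree with $U$ at the point.

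The main obstacle is the converse direction, specifically verifying that the union of the discontinuity sets of the ordinal-sum summands genuinely is the graph of a \emph{single} symmetric, surjective, non-increasing multi-function --- the bookkeeping around accumulation points of the interval families $(a_k,b_k)$, $(c_k,d_k)$, the sets $B$ and $C$, and the extra $\min$/$\max$ choices on $\{b_k\}\times\cint{b_k,c_k}$ etc., is exactly where the extended ordinal sum machinery of Proposition \ref{proext} is needed, and one must check that no new discontinuity escapes the curve and that surjectivity (equivalently $r(x)\neq\emptyset$ for all $x$, via Lemma \ref{lemnone}) is preserved. I would isolate this as a lemma: \emph{the discontinuity set of any $U\in\mathcal{U}$, described via its ordinal-sum decomposition, is $G(r)$ for a symmetric surjective non-increasing $r$}, and then the theorem follows by combining it with the one-sided continuity observation above. (This is presumably where the paper's characterization from \cite{mulfun} is being leveraged rather than reproved.)
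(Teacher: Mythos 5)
First, note that the paper gives no proof of this statement at all: it is imported verbatim from \cite{mulfun}, and essentially all of the surrounding machinery used later (Proposition \ref{atmost}, Lemmas \ref{lemconv} and \ref{lemconv2}, Corollary \ref{corid}, the segment structure of $r$) belongs to the same external toolkit. This matters for your converse direction, which is circular in the present context: you derive $U\in\mathcal{UR}$ from the decomposition of $U$ into an ordinal sum of representable, continuous Archimedean and internal pieces, but that decomposition is precisely the paper's main theorem, whose proof \emph{begins} by invoking the characterizing multi-function $r$ of $U$ --- i.e.\ by invoking the statement you are trying to prove. Propositions \ref{sumstrp} and \ref{pron} do not by themselves yield the decomposition; they handle only the case of Archimedean underlying operations and the case $U\notin\mathcal{N}$, and the passage from ``$T_U$ and $C_U$ are continuous'' to ``the discontinuity set sits on a single symmetric, surjective, non-increasing curve containing $U^{-1}(e)$'' is exactly the content that must be proved from scratch (e.g.\ by first establishing statements of the type of Proposition \ref{atmost} about the sections $u_x$ directly from continuity of $T_U$ and $C_U$). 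Your parenthetical admission that the characterization from \cite{mulfun} is ``being leveraged rather than reproved'' concedes the point.

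The forward direction also contains a concrete false step. You assert that ``a monotone function on an interval that is one-sidedly continuous everywhere and whose graph-of-discontinuities is contained in such a thin set cannot actually jump.'' A step function is monotone, right-continuous everywhere, and discontinuous at a single point, so thinness of the discontinuity set together with one-sided continuity proves nothing. Indeed, a uninorm in $\mathcal{U}_{\min}$ whose underlying t-norm is the drastic product is left- or right-continuous at every point of $\uint^2$, yet $T_U$ is discontinuous; so the work in this direction must be done by the $\mathcal{UR}$ hypothesis, not by one-sided continuity. The actual mechanism is the interplay of the three properties of $r$: since $U(e,e)=e$ forces $e\in r(e)$ and $r$ is non-increasing, one first shows that $R\cap\cint{0,e}^2$ can only meet the segments $x=e$ and $y=e$; then a genuine jump of $T_U$ there produces a horizontal segment of discontinuity points, whose reflection under the symmetry of $r$, combined with $U^{-1}(e)\subseteq R$ (which already forces $\cint{e,1}\subseteq r(e)$ in the $\min$-type case), contradicts non-increasingness. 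Your appeal to ``the min/max sandwich bound pinning the level curve down'' does not engage with this obstruction, so both directions as written have genuine gaps.
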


\begin{remark}\begin{mylist}
\item
Note that although for $U\in \mathcal{U}$ the previous theorem implies that $U$ is continuous on $\uint^2\setminus R$ for $R=G(r)$ it does not mean that all points of $R$ are points of discontinuity of $U.$ In fact, in \cite{mulfun} it was shown that
for a uninorm $U\in \mathcal{U}$ either $U(x,y)=e$ implies $x=y=e$ or there exists a non-empty interval $\opint{a,d}$ such that
$U(x,y)=e$ if and only if $x,y\in \opint{a,d}.$ In the later case $U$ is continuous in all points from  $\uint^2\setminus (\cint{0,a}\cup \cint{d,1})^2.$
Moreover, for a conjunctive uninorm $U\in \mathcal{U}$ (similarly for a disjunctive uninorm $U\in \mathcal{U}$) we have either
$U(x,1)=1$ for all $x>0$ or $U(x,1)<e$ for some $0<x<e.$ In the later case  $U$ is continuous in all points from $\lint{0,x} \times \uint \cup \uint\times \lint{0,x}.$
\item  The graph of a  symmetric,  surjective, non-increasing  multi-function can be divided into  connected maximal segments which are either strictly decreasing, or the horizontal segments, or the vertical segments. Note that a horizontal segment corresponds to a closed interval $Z$ such that there exists a $y\in \uint$ with $ r(x)=\{y\}$ for all $x\in\mathrm{int}(Z)$ (where $\mathrm{int}(Z)$ is the interval $Z$ without border points) and a vertical segment corresponds to a closed interval $V$ such that $ r(x)=V,$ $\mathrm{Card}(V)>1,$ for some $x\in \uint.$  We say that segment corresponding to an interval $S$ is strictly decreasing if  $y_1 \in r(x_1),$ $y_2\in r(x_2)$ for
$x_1,x_2\in S,$ $x_1<x_2$ implies $y_2<y_1$ and $\mathrm{Card}(r(x))=1$ for all $x\in \mathrm{int}(S).$ The previous description implies that all horizontal, vertical and strictly decreasing  segments correspond to closed intervals.
\end{mylist}  \label{remseg}
\end{remark}

We recall an example from \cite{mulfun}.

\begin{example}
Assume a representable uninorm  $U_1\colon \uint^2\lran \uint$  and a continuous t-norm $T\colon \uint^2\lran \uint$ and a continuous t-conorm
$C\colon \uint^2\lran \uint.$ For $e=\frac{1}{2}$ their ordinal sum $U^{\frac{1}{2}}=(\langle \frac{1}{4},\frac{1}{2},\frac{1}{2},\frac{3}{4},U_1 \rangle, \langle 0,\frac{1}{4},\frac{3}{4},\frac{3}{4},T \rangle, \langle 0,0,\frac{3}{4},1,C \rangle )^{\frac{1}{2}}$ is a uninorm, $U^{\frac{1}{2}} \in \mathcal{U}.$ For simplicity we will assume that $\frac{1}{2}$ is the neutral element of $U_1$ and that $U_1(x,1-x)=\frac{1}{2}$ for all
$x\in \opint{0,1}.$
 On Figure \ref{figcom} we can see the characterizing multi-function $r$ of $U^{\frac{1}{2}}$ as well as its set of discontinuity points.
\label{exla}
\end{example}

\begin{figure}[htb] \hskip2cm
\begin{picture}(150,150)
\put(0,0){ \line(1,0){150} }
\put(0,0){ \line(0,1){150} }
\put(150,150){ \line(-1,0){150} }
\put(150,150){ \line(0,-1){150} }

\put(0,50){ \line(1,0){100} }
\put(50,0){ \line(0,1){100} }
\put(0,100){ \line(1,0){150} }
\put(100,0){ \line(0,1){150} }

\put(70,70){$U_1^*$}
\put(25,20){$T^*$}
\put(125,120){$C^*$}
\put(50,120){$\max$}
\put(120,50){$\max$}
\put(70,20){$\min$}
\put(20,70){$\min$}

\linethickness{0.5mm}
\put(0,100){ \line(1,0){50} }

\put(100,0){ \line(0,1){50} }
\end{picture}
 \hskip2cm
\begin{picture}(150,150)
\put(0,0){ \line(1,0){150} }
\put(0,0){ \line(0,1){150} }
\put(150,150){ \line(-1,0){150} }
\put(150,150){ \line(0,-1){150} }

\put(0,50){ \line(1,0){100} }
\put(50,0){ \line(0,1){100} }
\put(0,100){ \line(1,0){150} }
\put(100,0){ \line(0,1){150} }

\put(50,100){ \line(1,-1){50} }

\put(62,62){$U_1^*$}
\put(82,82){$U_1^*$}
\put(25,20){$T^*$}
\put(125,120){$C^*$}
\put(50,120){$\max$}
\put(120,50){$\max$}
\put(70,20){$\min$}
\put(20,70){$\min$}

\linethickness{0.5mm}
\put(0,100){ \line(1,0){50} }
\put(0,100){ \line(0,1){50} }
\put(100,0){ \line(1,0){50} }
\put(100,0){ \line(0,1){50} }
\end{picture}
\caption{The uninorm $U^{\frac{1}{2}}$ from Example \ref{exla}. Left: the bold lines denote the points of discontinuity of $U^{\frac{1}{2}}.$
Right: the oblique and bold lines denotes the characterizing multi-function of $U^{\frac{1}{2}}.$}
\label{figcom}
\end{figure}
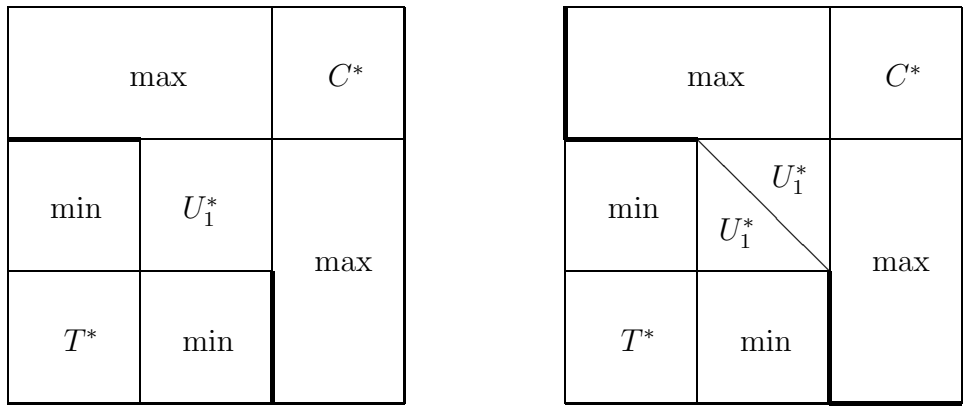
We  will now continue to examine a decomposition of a uninorm $U\in \mathcal{U}.$

\begin{lemma}
Let $U\colon \uint^2\lran \uint$ be a  uninorm $U\in \mathcal{U}$  and let $c\in \uint$ be an idempotent element.
Then if $a$ and $b$ are idempotent elements such that $U(x,x)\neq x$ for all $x\in \opint{a,b}$ we have either $U(c,x)=\min(x,c)$
for all $x\in \opint{a,b}$ or  $U(c,x)=\max(x,c)$
for all $x\in \opint{a,b}.$ Moreover, if $U$ is on $\cint{a,b}^2$ a nilpotent t-norm (nilpotent t-conorm) then
we have either $U(c,x)=\min(x,c)$
for all $x\in \lint{a,b}$ ($x\in \rint{a,b}$) or  $U(c,x)=\max(x,c)$
for all $x\in \lint{a,b}$ ($x\in \rint{a,b}$). \label{lemful}
\end{lemma}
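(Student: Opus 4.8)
\emph{Proof sketch.}
The plan is to reduce the statement to a single Archimedean block and then play off internality on $\{c\}\times\uint$ against associativity and the continuity of the underlying operations. Since $e$ is idempotent, the idempotent-free interval $\opint{a,b}$ does not contain $e$, so $\cint{a,b}\subseteq\cint{0,e}$ or $\cint{a,b}\subseteq\cint{e,1}$; passing if necessary to the dual uninorm $U^{d}(x,y)=1-U(1-x,1-y)$, which interchanges $\min$ with $\max$, t-norms with t-conorms, and $\cint{a,b}$ with $\cint{1-b,1-a}$, I may assume $\cint{a,b}\subseteq\cint{0,e}$. As $T_U$ is continuous, it is an ordinal sum of continuous Archimedean t-norms whose idempotent points are precisely the endpoints of the summand carriers, so $\cint{a,b}$ is one such carrier and the restriction of $U$ to $\cint{a,b}^2$ is a linear rescaling of a continuous Archimedean t-norm with annihilator $a$ and neutral element $b$. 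Hence on $\cint{a,b}^2$ the function $U$ is continuous, $U(x,a)=a$ and $U(x,b)=x$ for all $x\in\cint{a,b}$, and $U(u,v)<\min(u,v)$ whenever $u,v\in\opint{a,b}$. Moreover, $c$ being idempotent, $U$ is internal on $\{c\}\times\uint$, so $U(c,x)\in\{c,x\}$ for every $x$, and $c\notin\opint{a,b}$ by hypothesis. For $c=a$ we have $U(c,x)=a=\min(c,x)$ on $\cint{a,b}$, and for $c=b$ we have $U(c,x)=x=\min(c,x)$ on $\cint{a,b}$; so only $c<a$ and $c>b$ remain.

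Fix such a $c$ and write $\opint{a,b}=A_{\min}\cup A_{\max}$, where $x\in A_{\min}$ if $U(c,x)=\min(c,x)$ and $x\in A_{\max}$ if $U(c,x)=\max(c,x)$ (the two values differ because $c\notin\cint{a,b}$). A short monotonicity argument, using that $u_c$ is non-decreasing and $c\notin\cint{a,b}$, shows that any $x_1\in A_{\min}$ and $x_2\in A_{\max}$ satisfy $x_1<x_2$; so the first assertion amounts to showing that $A_{\min}$ and $A_{\max}$ are not both non-empty. Assume they are and pick $x_1<x_2$ in them. Since $z\mapsto U(x_2,z)$ is continuous on $\cint{a,b}$ with $U(x_2,a)=a\le x_1\le x_2=U(x_2,b)$, the intermediate value theorem gives $z_0\in\cint{a,b}$ with $U(x_2,z_0)=x_1$, and associativity yields $U(c,x_1)=U(c,U(x_2,z_0))=U(U(c,x_2),z_0)$. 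If $c<a$ then $U(c,x_2)=x_2$, so the right-hand side equals $U(x_2,z_0)=x_1$, contradicting $U(c,x_1)=c$. If $c>b$ then $U(c,x_2)=c$, so the right-hand side equals $U(c,z_0)\in\{c,z_0\}$; it cannot be $c$ (that would force $x_1=c$), hence $x_1=z_0$ and $U(x_1,x_2)=U(x_2,z_0)=x_1=\min(x_1,x_2)$ with $x_1,x_2\in\opint{a,b}$, contradicting $U(u,v)<\min(u,v)$ on $\opint{a,b}^2$. Therefore one of $A_{\min},A_{\max}$ is empty, which is the first claim.

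Finally, for the ``moreover'' part, suppose the block is a \emph{nilpotent} t-norm, so there exist $y_1,y_2\in\opint{a,b}$ with $U(y_1,y_2)=a$; consider $c<a$ (the case $c>b$ is analogous, and $c\in\{a,b\}$ was treated above). If $U(c,\cdot)=\max(c,\cdot)$ on $\opint{a,b}$, then $U(c,y_1)=y_1$, so $U(c,a)=U(c,U(y_1,y_2))=U(U(c,y_1),y_2)=U(y_1,y_2)=a=\max(c,a)$; if $U(c,\cdot)=\min(c,\cdot)$ on $\opint{a,b}$, then $U(c,y_1)=c$, so $U(c,a)=U(U(c,y_1),y_2)=U(c,y_2)=c=\min(c,a)$. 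In either case the min/max behaviour extends from $\opint{a,b}$ to $\lint{a,b}$, and the nilpotent t-conorm case (extension to $\rint{a,b}$) follows by the duality of the first paragraph. I expect the main obstacle to be the second paragraph: excluding a ``switch'' of $U(c,\cdot)$ between $\min$ and $\max$ on $\opint{a,b}$ requires combining internality on $\{c\}\times\uint$, monotonicity, the intermediate value theorem inside a single Archimedean block, associativity, and the strict inequality $U(u,v)<\min(u,v)$ on $\opint{a,b}^2$, and one must keep track of which endpoint data (annihilator $a$, neutral $b$) is in play in each of the two sub-cases.
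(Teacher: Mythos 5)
Your proof is correct, and its overall skeleton matches the paper's: reduce to a block on one side of $e$ (the paper says ``the other case is analogous'' where you invoke duality), use internality of $U$ on $\{c\}\times\uint$ at the idempotent $c$, observe that min-points must precede max-points by monotonicity, and derive a contradiction from the coexistence of both; the ``moreover'' step via $a=U(y_1,y_2)$ and associativity is essentially identical to the paper's. Where you genuinely diverge is in the mechanism of the central contradiction. The paper exploits the Archimedean property multiplicatively: it takes powers $U(x_2,\ldots,x_2)<x_1$ and computes $c=U(c,x_2,\ldots,x_2)\leq U(c,x_1)=x_1$, needing only monotonicity and the fact that powers of an interior element descend to $a$. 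You instead use divisibility of the continuous block: the intermediate value theorem produces $z_0$ with $U(x_2,z_0)=x_1$, and then associativity plus internality at $c$ and the strict inequality $U(u,v)<\min(u,v)$ on $\opint{a,b}^2$ yield the contradiction. Both routes rest on the same structural fact (the block is a rescaled continuous Archimedean t-norm); the paper's is marginally more economical since it avoids the case split on $c<a$ versus $c>b$ and the auxiliary fact $U<\min$ on the open block, while yours makes the role of solvability inside the block explicit. Either argument is complete; there is no gap in your proposal.
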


\begin{proof} If $c=e$ the claim evidently holds. Assume $c\neq e.$
If there exists $x_1,x_2 \in \opint{a,b}$ such that $U(c,x_1)=\min(x_1,c)$ and $U(c,x_2)=\max(x_2,c)$ then it is evident that either $x_1< x_2\leq e$ and $c>e$ or $e\leq x_1< x_2$ and $c<e.$ We will assume $x_1<x_2\leq e$ and $c>e$ as the other case is analogous.
Now since $T_U$ is continuous and $U(x,x)\neq x$ for all $x\in \opint{a,b}$ there exists an $n\in \mathbb{N}$ such that $U(\underbrace{x_2,\ldots,x_2}_{n\text{-times}})<x_1.$ Moreover, $$c=U(c,x_2)=U(c,\underbrace{x_2,\ldots,x_2}_{n\text{-times}})\leq U(c,x_1)=x_1$$ what is a contradiction. If $U$ is a nilpotent t-norm on $\cint{a,b}^2$ then $U(x_1,x_2)=a$ for some $x_1,x_2\in \opint{a,b}$
and $U(a,c)=U(x_1,x_2,c)$ and the result follows.
\end{proof}

Next we include the result of \cite[Theorem 5.1]{ordut}.
Here $\mathcal{U}(e)=\{U\colon \uint^2 \lran \uint \mid U \text{ is associative, non-decreasing, with the neutral element } e\in \uint\}.$ Thus $U\in \mathcal{U}(e)$ is a uninorm if it is commutative.

\begin{theorem}
Let $U\in \mathcal{U}(e)$ and $a,b,c,d \in \uint,$ $a\leq b\leq e\leq  c\leq d$ be such that $U\vert_{\cint{a,b}^2}$ is associative, non-decreasing, with the neutral element $b$ and
$U\vert_{\cint{c,d}^2}$ is associative, non-decreasing, with the neutral element $c.$ Then the set $(\cint{a,b}\cup \cint{c,d})^2$ is closed under $U.$ \label{th51}
\end{theorem}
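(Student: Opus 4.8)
The plan is to split $(\cint{a,b}\cup\cint{c,d})^2$ into four pieces and dispatch all but one of them trivially. On $\cint{a,b}^2$ and on $\cint{c,d}^2$ the hypothesis already says that $U$ restricts to an operation on $\cint{a,b}$ with neutral element $b$ and to an operation on $\cint{c,d}$ with neutral element $c$, so on these two squares the values of $U$ stay inside $\cint{a,b}$, resp.\ $\cint{c,d}$. The only genuinely new case is a mixed pair. I will treat $x\in\cint{a,b}$, $y\in\cint{c,d}$ in full; the mirror pair $x\in\cint{c,d}$, $y\in\cint{a,b}$ is handled by the obvious order-reversal of the whole configuration (which interchanges the roles of $a$ and $d$, of $b$ and $c$, keeps $e$ between the two intervals, and swaps the two subsemigroups), so it follows by the identical argument.

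So fix $x\in\cint{a,b}$, $y\in\cint{c,d}$ and put $z=U(x,y)$. First I locate $z$ coarsely: since $x\le b\le e\le c\le y$, monotonicity together with the neutrality of $e$ for $U$ gives $a\le x=U(x,e)\le U(x,y)\le U(e,y)=y\le d$, so $z\in\cint{a,d}$. The crucial point is then to exclude the gap $\opint{b,c}$. I would do this by squeezing $z$ using two fixed-point identities coming from associativity: because $b$ is the (two-sided) neutral element of $U$ on $\cint{a,b}$,
$$U(b,z)=U\bigl(b,U(x,y)\bigr)=U\bigl(U(b,x),y\bigr)=U(x,y)=z,$$
and because $c$ is the (two-sided) neutral element of $U$ on $\cint{c,d}$,
$$U(z,c)=U\bigl(U(x,y),c\bigr)=U\bigl(x,U(y,c)\bigr)=U(x,y)=z.$$

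Next I invoke two one-line ``pinching'' facts, each just a monotone squeeze between two instances of a neutral element: for every $t\in\cint{b,e}$ one has $b=U(b,b)\le U(b,t)\le U(b,e)=b$, hence $U(b,t)=b$; and for every $t\in\cint{e,c}$ one has $c=U(e,c)\le U(t,c)\le U(c,c)=c$, hence $U(t,c)=c$. Now suppose $z\notin\cint{a,b}$; since $z\ge a$ this forces $z>b$. If also $z\le e$, then $z\in\cint{b,e}$, so the first pinch gives $U(b,z)=b$, contradicting $U(b,z)=z>b$; hence $z>e$. If moreover $z<c$, then $z\in\cint{e,c}$, so the second pinch gives $U(z,c)=c$, contradicting $U(z,c)=z<c$; hence $z\ge c$, and with $z\le d$ this yields $z\in\cint{c,d}$. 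In every case $z\in\cint{a,b}\cup\cint{c,d}$, which is exactly the claimed closedness.

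I do not foresee a serious obstacle; the delicate points are purely bookkeeping. One must use ``neutral element'' two-sidedly, so that all of $U(b,x)=U(x,b)=x$ on $\cint{a,b}$ and $U(c,y)=U(y,c)=y$ on $\cint{c,d}$ are available — this is what makes both fixed-point identities, and the mirror case, go through. One should also check that the degenerate coincidences $b=e$ or $c=e$ (which collapse one half of the gap) are harmless: then $\cint{b,e}$ or $\cint{e,c}$ is a single point and the same inequalities still apply. It is perhaps worth remarking that the argument uses neither commutativity of $U$ nor any continuity.
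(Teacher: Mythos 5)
Your proof is correct, and it is worth noting that the paper itself offers no proof of this statement to compare against: Theorem \ref{th51} is imported verbatim from Drygas (\cite[Theorem 5.1]{ordut}). So your argument stands as a self-contained, elementary derivation. The two unmixed squares are indeed disposed of by the hypothesis, with the one caveat that you must (as you do) read ``$U\vert_{\cint{a,b}^2}$ is associative, non-decreasing, with neutral element $b$'' as asserting that the restriction is a genuine binary operation on $\cint{a,b}$, i.e.\ closed; this reading is forced, since otherwise the theorem is false (a t-norm with $U(x,y)<a$ for some $x,y\in \opint{a,b}$ and $\cint{c,d}=\{1\}$ would be a counterexample), and it is the reading intended in the source. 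For the mixed case, the coarse bound $x=U(x,e)\le U(x,y)\le U(e,y)=y$ correctly places $z=U(x,y)$ in $\cint{a,d}$, the two fixed-point identities $U(b,z)=z$ and $U(z,c)=z$ follow from global associativity together with the two local neutral elements, and the pinches $U(b,t)=b$ for $t\in\cint{b,e}$ and $U(t,c)=c$ for $t\in\cint{e,c}$ then exclude $z\in\opint{b,c}$ exactly as you argue. The mirror case $x\in\cint{c,d}$, $y\in\cint{a,b}$ does go through with the other one-sided halves of the neutrality identities ($U(y,b)=y$, $U(c,x)=x$) and the transposed pinches, and your closing remarks are accurate: the argument uses neither commutativity (consistent with the definition of $\mathcal{U}(e)$) nor continuity, and the degenerate cases $b=e$ or $c=e$ cause no trouble.
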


\begin{proposition}
Let $U\colon \uint^2\lran \uint$ be a  uninorm. If $a,b\in \cint{0,e}$ and $c,d\in \cint{e,1}$ are idempotent elements, $a\leq b$ and $c\leq d,$ then $(\lint{a,b} \cup \{U(b,c)\}\cup \rint{c,d})^2$ is closed under $U.$
\label{restr}
\end{proposition}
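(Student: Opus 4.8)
The plan is to verify closedness directly, using the three elementary bounds a uninorm carries — $U(x,y)\le\min(x,y)$ on $\cint{0,e}^2$, $U(x,y)\ge\max(x,y)$ on $\cint{e,1}^2$, and $\min(x,y)\le U(x,y)\le\max(x,y)$ on $\cint{0,e}\times\cint{e,1}$ — together with associativity and the idempotency of $a,b,c,d$. (Theorem \ref{th51} is the natural ambient tool but does not apply verbatim, since $b$ need not be the neutral element of $U\vert_{\cint{a,b}^2}$.) Set $v:=U(b,c)$. From $b\le e\le c$ and the mixed bound, $b\le v\le c$; since $b,c$ are idempotent, $U(v,v)=U(b,c,b,c)=U(U(b,b),U(c,c))=U(b,c)=v$, so $v$ is idempotent, and $U(b,v)=U(b,U(b,c))=U(U(b,b),c)=v$, dually $U(v,c)=v$. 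Since $U(a,a)=a$ and $U(d,d)=d$, monotonicity makes $\cint{a,d}^2$ closed under $U$, so for $(x,y)$ in $S:=\lint{a,b}\cup\{v\}\cup\rint{c,d}$ we already have $U(x,y)\in\cint{a,d}$; thus it suffices to show $U(x,y)\notin\cint{b,c}\setminus\{v\}$.

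Split $(x,y)\in S^2$ (up to commutativity) by which of the three pieces of $S$ contain $x$ and $y$. If both lie in $\lint{a,b}$, then $U(x,y)\le\min(x,y)<b$; dually if both lie in $\rint{c,d}$; and $U(v,v)=v$. In the mixed cases, use $v=U(b,c)$ with associativity to rewrite a coordinate equal to $v$ as $U$ of an element of $\cint{a,b}$ with an element of $\cint{c,d}$, so we may assume $x\in\cint{a,b}$ and $y\in\cint{c,d}$; then $x\le e\le y$ and $x\le w\le y$ where $w:=U(x,y)$. Suppose $w\in\cint{b,c}\setminus\{v\}$. Since $U(x,b)\le x$ and $U(y,c)\ge y$, associativity gives $U(w,b)=U(U(x,b),y)\le w$ and $U(w,c)=U(x,U(y,c))\ge w$; with $b\le w\le c$ this yields $U(w,b)\in\cint{b,w}$ and $U(w,c)\in\cint{w,c}$. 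Hence $U(w,v)=U(U(w,b),c)\ge U(b,c)=v$ and $U(w,v)=U(U(w,c),b)\le U(c,b)=v$, so $U(w,v)=v$.

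The contradiction is immediate in two cases: if $v\le e$ and $w<v$, then $v=U(w,v)\le\min(w,v)<v$; dually if $v\ge e$ and $w>v$; and if $v=e$ then $w=U(w,e)=e=v$. The remaining situation — $w$ lying "beyond" $v$ relative to $e$ — is where I expect the real obstacle. To attack it I would first replace $x$ by $x':=U(x,b)\in\lint{a,b}$, which satisfies $U(x',b)=U(x,b,b)=x'$ since $b$ is idempotent; then $w':=U(x',y)=U(w,b)$ is again a mixed product in $\cint{b,w}$, so the previous paragraph gives $U(w',v)=v$, and, using $U(w',b)=w'$, this rewrites as $U(w',c)=v$; with $U(w',c)\ge w'$ one gets $w'\le v$, and when $v\le e$ this forces $w'=v$, i.e.\ $U(w,b)=v$ — and symmetrically $U(w,c)=v$ when $v\ge e$. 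Feeding these back through the sandwich $U(w,b)\le w\le U(w,c)$, together with a further descent that pushes $x$ (resp.\ $y$) through the Archimedean component containing it down (resp.\ up) to its bounding idempotent — on whose interval Theorem \ref{th51} does apply, that idempotent being a genuine neutral element of the restriction — one is driven to $w=v$, the desired contradiction. The half-open endpoints of $S$ are consistent with all of this: in the non-degenerate cases $U(x,b)<b$ and $U(y,c)>c$, so $b$ and $c$ are admitted to $S$ only when they coincide with $v$. The main difficulty is this last "beyond $v$" analysis; everything else is bookkeeping with the three basic inequalities.
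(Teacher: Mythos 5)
Your argument up to the identity $U(w,v)=v$ is sound, but the proof does not close: the case you defer (``$w$ beyond $v$'') is not a residual technicality --- it is essentially the entire content of the proposition. Indeed, from facts you have already established it follows that $v=U(b,c)\in\{b,c\}$: if $v\le e$ then $v=U(b,v)\le\min(b,v)=b$, and if $v\ge e$ then $v=U(v,c)\ge\max(v,c)=c$. Consequently your ``immediate contradiction'' cases are vacuous (any $w\in\cint{b,c}$ automatically lies on the far side of $v$ from $e$), and what remains unproved is exactly the assertion that $U(x,y)$ avoids all of $\cint{b,c}\setminus\{v\}$ for $x\in\cint{a,b}$, $y\in\cint{c,d}$. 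The paper does not prove this from scratch: it invokes Theorem \ref{th51} to get that $(\cint{a,b}\cup\cint{c,d})^2$ is closed under $U$, and then only has to exclude the single value $c$ (when $U(b,c)=b$), which takes a two-line associativity computation: $b=U(c,b)=U(U(x,y),b)=U(y,U(x,b))=U(y,x)=c$ for $x\in\cint{a,b}$, using $U(x,b)=x$.

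Your reason for refusing Theorem \ref{th51} --- that $b$ need not be the neutral element of $U\vert_{\cint{a,b}^2}$ --- is legitimate for an arbitrary uninorm (idempotents of a non-continuous t-norm such as the nilpotent minimum need not be neutral below themselves), but it undercuts your own proposed repair, which descends through ``the Archimedean component containing $x$'' and thus presupposes exactly the continuity of $T_U$ and $C_U$ that would make $b$ and $c$ neutral for the restrictions and let Theorem \ref{th51} apply directly. In the paper's standing context ($U\in\mathcal{U}$) continuity of $T_U$ gives $U(x,b)=\min(x,b)$ for every idempotent $b\le e$, the hypotheses of Theorem \ref{th51} are met, and the short route works; without continuity, your sketch (``a further descent \ldots one is driven to $w=v$'') is not an argument, and I do not see how to complete it. Either verify the neutrality hypotheses from continuity and import Theorem \ref{th51}, or supply the missing case in full; as written the proof is incomplete at its only essential point.
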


\begin{proof}
From the previous theorem we know that the set $(\cint{a,b}\cup \cint{c,d})^2$ is closed under $U.$ Since $b$ and $c$ are idempotent points we have
$U(b,c)\in \{b,c\}.$ If $b=c=e$ then the claim evidently holds. Suppose $b\neq c$ and
 $U(b,c)=b$ (the case when $U(b,c)=c$ is analogous). Assume that there are $x,y\in (\cint{a,b} \cup \rint{c,d})^2$
such that $U(x,y)=c.$ 
If $x\in \cint{a,b}$ (similarly if $y\in \cint{a,b}$) then $$b=U(c,b)=U(y,x,b)=U(y,x)=c$$ what is a contradiction.
Thus both $x,y\in \rint{c,d}.$ Then, however, $$c=U(x,y)\geq \max(x,y)>c $$ what is again a contradiction.
Thus $(\lint{a,b} \cup \{U(b,c)\}\cup \rint{c,d})^2$ is closed under $U.$
\end{proof}

Before we continue we recall a result from \cite{mulfun}.

\begin{proposition}
Let $U\colon \uint^2\lran \uint$ be a uninorm, $U\in \mathcal{U}.$ Then for each $x\in \uint$ there is at most one point of discontinuity of $u_x.$
Further, if $u_x$ is non-continuous in $y\in \uint$ then $U(x,z)<e$ for all $z<y$ and $U(x,z)>e$ for all $z>y.$ \label{atmost}
\end{proposition}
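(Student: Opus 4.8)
The plan is to reduce to the case $x<e$ and then to show that $u_x$ can be discontinuous only at the single argument at which its values pass through the level $e$. If $x=e$ then $u_x=\mathrm{id}_{\uint}$ is continuous, so assume $x\neq e$; replacing $U$ by its dual uninorm (which has neutral element $1-e$, interchanges the underlying t-norm and t-conorm, and carries $u_x$ into $u_{1-x}$) we may assume $x<e$. Since $\{x\}\times\cint{0,e}\subseteq\cint{0,e}^{2}$ and the restriction of $U$ to $\cint{0,e}^{2}$ is the linear rescaling $e\,T_U(\tfrac{\cdot}{e},\tfrac{\cdot}{e})$ of the continuous t-norm $T_U$, the function $u_x$ is continuous on $\lint{0,e}$ and left-continuous at $e$; moreover $U(x,z)\le\min(x,z)\le x<e$ for $z<e$, and $U(x,e)=x<e$. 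Hence every discontinuity point of $u_x$ lies in $\cint{e,1}$.

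Next, by the characterization theorem of \cite{mulfun} and the Remark following it, the set $\{(s,t)\in\uint^{2}:U(s,t)=e\}$ is either $\{(e,e)\}$ or a square $\opint{a,d}^{2}$; in the second case $U$ is continuous off $(\cint{0,a}\cup\cint{d,1})^{2}$, so if $x\in\opint{a,d}$ then $u_x$ is continuous and there is nothing to prove. In every remaining case $U(x,z)\neq e$ for all $z$, so the non-decreasing function $u_x$ has range in $\lint{0,e}\cup\rint{e,1}$, and there is a unique $y\in\uint$ -- with $y\ge e$, by the first paragraph -- such that $U(x,z)<e$ for all $z<y$ and $U(x,z)>e$ for all $z>y$. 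This is exactly the ``Further'' assertion, granted that $y$ is the only possible discontinuity point; and the ``at most one'' part then follows, since a second discontinuity at some $y'\neq y$, say $y'<y$, would force $U(x,z)>e$ (by the dichotomy at $y'$) and $U(x,z)<e$ (by the dichotomy at $y$) simultaneously for $z\in\opint{y',y}$. So it suffices to prove that $u_x$ is continuous at every $z_{0}\neq y$.

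For such a $z_{0}$ we may assume $z_{0}\ge e$, and then $U(x,z)$ stays strictly on one side of $e$ for $z$ near $z_{0}$ inside $\opint{e,1}$. Let $p<q$ be consecutive idempotent points of $U$ in $\cint{e,1}$ with $z_{0}\in\opint{p,q}$ (the end cases $p=e$, $q=1$ are handled the same way, using the first paragraph at $e$ and monotonicity at $1$; if $z_{0}$ is an accumulation point of idempotents, continuity at $z_{0}$ follows from continuity on the blocks below it and monotonicity). By Proposition \ref{restr} the set $\cint{p,q}^{2}$ is closed under $U$, and on it -- using the ordinal-sum structure of continuous t-conorms -- $U$ is a linear rescaling of a continuous Archimedean t-conorm with neutral element $p$; hence every $z\in\opint{p,q}$ equals $U(s,s)$ with $s\in\opint{p,z}$, the map $z\mapsto s$ being continuous, and $U(x,z)=U\bigl(U(x,s),s\bigr)$ exhibits $u_x$ near $z_{0}$ as a composition of continuous maps once the inner value $U(x,s)$ lies in a region where the outer application is continuous (either $U(x,s)\ge e$, where $U|_{\cint{e,1}^{2}}$ is continuous, or $U(x,s)$ an idempotent value $\le x$). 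The delicate point is this last step: the apparent circularity is broken by taking $z_{0}$ to be the infimum of the discontinuity points of $u_x$ inside $\opint{p,q}$, so that $s$ lands in $\opint{p,z_{0}}$, where $u_x$ is already known continuous; and the idempotent points of $U$ in $\opint{e,1}$, where $u_x$ is internal and may switch between $\min$ and $\max$, are controlled by Lemma \ref{lemful} -- which, when $x$ is itself idempotent, already shows directly that $u_x$ coincides with $z\mapsto\min(x,z)$ or with $z\mapsto\max(x,z)$ on all of $\opint{p,q}$. Everything else is routine bookkeeping with the structure theorem for $\mathcal{U}$ recalled above.
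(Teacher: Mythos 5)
This proposition is one the paper does not prove at all: it is explicitly recalled from the companion manuscript \cite{mulfun} ("Before we continue we recall a result from \cite{mulfun}"), so there is no in-paper argument to compare yours against. Judged on its own, your proposal has the right skeleton --- reduce to $x<e$, observe that $u_x<e$ on $\cint{0,e}$, introduce the threshold $y$ where $u_x$ crosses the level $e$, and try to propagate continuity through each Archimedean block of $C_U$ via $U(x,z)=U(U(x,s),s)$ with $z=U(s,s)$ --- but the central step is not closed. Continuity of the composite at $z_0$ needs, besides continuity of $s\mapsto U(x,s)$ at $s_0$ (which your infimum device does address), continuity of $U$ itself at the point $(U(x,s_0),s_0)$. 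Your dichotomy for the inner value --- ``either $U(x,s)\ge e$ \dots or $U(x,s)$ an idempotent value $\le x$'' --- is false: for $x<e<s$ the value $U(x,s)$ can lie strictly between $x$ and $e$ and be non-idempotent. (Take the ordinal sum of Example \ref{exla} with $x\in\opint{1/4,1/2}$ and $s\in\opint{1/2,3/4}$: there $U(x,s)=U_1^*(x,s)\in\opint{1/4,1/2}$ in general.) In that case $(U(x,s_0),s_0)$ sits in $\cint{0,e}\times\cint{e,1}$, exactly the region where continuity of $U$ is what is at stake, so the reduction is circular rather than progress-making; invoking the structure of the blocks there presupposes the decomposition that this proposition is used to establish.

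Two further, smaller holes: (a) nothing prevents $s_0=s(z_0)$ from equalling the threshold $y$, at which $u_x$ is permitted to be discontinuous, so ``$s$ lands where $u_x$ is already known continuous'' can fail and you would need to perturb the factorization (e.g.\ $z=U(s_1,s_2)$ with $s_1\neq s_2$ avoiding $y$) and justify that this is always possible; (b) the accumulation-of-idempotents case, and the borderline jumps with left limit or right limit equal to $e$, are dismissed as routine, but they are precisely where one must use internality of $u_x$ at idempotents (Lemma \ref{lemful}, Proposition \ref{restr}) to see that any surviving jump necessarily crosses $e$ and hence occurs at $y$. As written, the proof is a plausible plan with an acknowledged but unrepaired circularity at its core, not a complete argument.
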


\begin{proposition}
Let $U\colon \uint^2\lran \uint$ be a  uninorm, $U\in \mathcal{U}.$ If $a$ is an idempotent element of $U$ then $u_a$ is either continuous or it is non-continuous in point $b,$ such that $b$ is an idempotent element of $U.$ \label{idid}
\end{proposition}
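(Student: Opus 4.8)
The plan is to argue by contradiction, combining Proposition \ref{atmost} with the internality of $U$ on $\{a\}\times\uint$ (which holds since $a$ is idempotent and $U\in\mathcal{U}$, as recalled just before the definition of the extended ordinal sum). First I would dispose of the trivial cases: if $a=e$ then $u_a=\mathrm{id}$ is continuous, and if $u_a$ is continuous there is nothing to prove. So assume $a\neq e$ and that $u_a$ is discontinuous; by Proposition \ref{atmost} it has exactly one point of discontinuity, call it $b$, and $U(a,z)<e$ for all $z<b$ while $U(a,z)>e$ for all $z>b$. The goal is to show $b$ is idempotent, i.e. $U(b,b)=b$.

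The key step is to pin down the value $U(a,b)$ and then $U(b,b)$ using internality and monotonicity. Since $a$ is idempotent, $U(a,z)\in\{a,z\}$ for every $z$. Combined with the jump behaviour from Proposition \ref{atmost}, for $z<b$ we get $U(a,z)<e$, and for $z>b$ we get $U(a,z)>e$. Suppose first $a<e$. For $z$ slightly below $b$, $U(a,z)\in\{a,z\}$ and is $<e$, which is automatic; for $z$ slightly above $b$, $U(a,z)\in\{a,z\}$ and must exceed $e$, forcing $U(a,z)=z$ (as $a<e$), hence $b\geq e$. Then one-sided continuity of $U$ at $(a,b)$ (Theorem of \cite{mulfun}: $U$ is left- or right-continuous at every point) together with the left limit $U(a,b^-)\le e$ and the right values $U(a,z)=z$ forces $U(a,b)\in\{$value $\le e$, or $b\}$; examining both sub-cases and using $u_a$ discontinuous exactly at $b$ shows $U(a,b)=b$ and $U(a,z)=z$ for all $z\ge b$, i.e. $b$ acts as if it were in the ``t-conorm side'' relative to $a$. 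Symmetrically, if $a>e$ one gets $b\le e$ and $U(a,z)=z$ for all $z\le b$.

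To conclude $U(b,b)=b$, I would use associativity and the $n$-ary form. Take $a<e$, $b\ge e$, $U(a,z)=z$ for all $z\ge b$ (established above). If $U(b,b)>b$, then since $U$ restricted to $\cint{e,1}^2$ is a continuous t-conorm, either $U(b,b)=b$ (contradiction) or iterating gives $U(b,\dots,b)$ increasing; but $b=U(a,b)=U(a,b,b)=U(a,U(b,b))=U(b,b)$ since $U(a,w)=w$ for all $w\ge b\ge U(b,b)$ — wait, we need $U(b,b)\ge b$, which holds as $b\ge e$ and $U(b,b)\ge\max(b,b)=b$ on the t-conorm side; thus $U(a,U(b,b))=U(b,b)$, forcing $U(b,b)=b$. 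The case $a>e$, $b\le e$ is dual, using that $U$ on $\cint{0,e}^2$ is a continuous t-norm and $U(a,w)=w$ for all $w\le b$.

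The main obstacle is the careful bookkeeping at the discontinuity point $(a,b)$ itself: Proposition \ref{atmost} controls $U(a,z)$ only for $z<b$ and $z>b$, not at $z=b$, so one must invoke one-sided continuity at $(a,b)$ and the internality $U(a,b)\in\{a,b\}$ to force $U(a,b)=b$ (ruling out $U(a,b)=a$, which would contradict either $a\ne b$ with the jump, or idempotency of $a$). Once $U(a,b)=b$ and $U(a,z)=z$ for all $z$ on the appropriate side of $b$ are established, the associativity argument closing out $U(b,b)=b$ is routine.
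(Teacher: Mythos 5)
Your setup (Proposition \ref{atmost}, internality of $u_a$, reduction to $a<e$ and $b\geq e$, $U(a,z)=a$ for $e<z<b$ and $U(a,z)=z$ for $z>b$) matches the paper's, but the closing step has a genuine gap: the chain $b=U(a,b)=U(a,b,b)=U(a,U(b,b))=U(b,b)$ is circular. By associativity $U(a,b,b)=U(U(a,b),b)=U(b,b)$, so the middle equality $U(a,b)=U(a,b,b)$ is precisely the assertion $b=U(b,b)$ that you are trying to prove. Working only with $b$ and points $z\geq b$ cannot close the argument, because $U(a,\cdot)$ being the identity on $\cint{b,1}$ gives $U(a,U(b,b))=U(b,b)$ for \emph{both} groupings of $U(a,b,b)$ and hence only the tautology $U(b,b)=U(b,b)$. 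The information that forces idempotency of $b$ lives \emph{below} $b$: if $b$ is not idempotent then, by continuity of $C_U$, $b$ lies in the interior of an Archimedean component of the underlying t-conorm, so there is $b_1$ with $e<b_1<b$ and $U(b_1,b_1)>b$. For such $b_1$ you have $U(a,b_1)=a$ (internality plus the jump), while $U(a,U(b_1,b_1))=U(b_1,b_1)$ since $U(b_1,b_1)>b$; associativity then yields $a=U(a,b_1,b_1)=U(b_1,b_1)>b\geq e>a$, a contradiction. This is exactly the paper's argument, and it is the step your proposal is missing.

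A secondary error: your claim that one-sided continuity forces $U(a,b)=b$ is false. Internality gives $U(a,b)\in\{a,b\}$, and left-continuity of $u_a$ at $b$ is consistent with $U(a,b)=a$; e.g.\ in the ordinal sum of Example \ref{exla} with $U_1$ conjunctive one has $U(\tfrac14,\tfrac34)=\tfrac14$ although $u_{1/4}$ jumps at $\tfrac34$. Fortunately the value of $u_a$ at the discontinuity point itself is irrelevant to the statement, so this does not need to be repaired once the argument above is used.
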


\begin{proof}
If $a$ is an idempotent element and $u_a$ is non-continuous in $b$ then Proposition \ref{atmost} implies that $U(a,x)<e$ for $x<b$ and   $U(a,x)>e$ for $x>b.$
If $a=e$ then $u_a$ is evidently continuous, thus we will  assume $a<e$ (the case for $a>e$ is analogous). Then since $U\in \mathcal{U}$ we have $b\geq e.$ If $b$ is not an idempotent element then
there exists $b_1$ with $e<b_1<b$ such that $U(b_1,b_1)>b$ and since $U$ is internal on $\{a\}\times \uint$ we have $$a=U(a,b_1)=U(a,b_1,b_1)=U(b_1,b_1)>b$$ what is a contradiction.
\end{proof}

\begin{proposition}
Let $U\colon \uint^2\lran \uint$ be a  uninorm $U\in \mathcal{U}.$ If
for some $x_1<x_2$ the function
 $u_{x_1}$ is non-continuous in $y$ and the function   $u_{x_2}$ is non-continuous in $y$ then $y$ is an idempotent element of $U.$ \label{proidc}
 \end{proposition}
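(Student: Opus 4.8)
The plan is to argue by contradiction, after a reduction. If $y=e$ the statement is trivial, so assume $y\neq e$; since $U(e,\cdot)$ is the identity, Proposition~\ref{atmost} forces any first argument whose $u$-function jumps at a point $>e$ to be $<e$ (dually for $<e$), so we may take $y>e$ and $x_1<x_2<e$, the case $y<e$ being symmetric. If $x_1$ or $x_2$ were idempotent, Proposition~\ref{idid} applied to that $u_{x_i}$ (which is discontinuous at $y$) would already yield that $y$ is idempotent; so assume $x_1,x_2$ are not idempotent, and suppose, for contradiction, that $y$ is not idempotent. Since $U|_{\cint{e,1}^2}$ is a scaled continuous t-conorm whose idempotents form a closed set containing $e$ and $1$, there is a maximal idempotent-free open interval $\opint{a,b}$ with $e\le a<y<b\le 1$ and $a,b$ idempotent; on $\cint{a,b}^2$ the uninorm is a scaled continuous \emph{Archimedean} t-conorm with a continuous additive generator, so $U(t,t)>t$ on $\opint{a,b}$, the $U$-powers of any $t\in\opint{a,b}$ increase to $b$, and every $y_0\in\opint{a,b}$ factors as $y_0=U(y_1,y_2)$ with $y_1,y_2\in\opint{a,y_0}$.

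Next I would fix the local picture near $y$. By a squeeze argument $u_x$ is discontinuous at $y$ for every $x\in\cint{x_1,x_2}$, since $U(x,z)\le U(x_2,z)<e$ for $z<y$ and $U(x,z)\ge U(x_1,z)>e$ for $z>y$, so $u_x$ jumps across $e$ at $y$. By Proposition~\ref{atmost} then $U(x,z)\neq e$ for $z\neq y$, and using the shape of the $e$-level set of a uninorm in $\mathcal U$ recalled in Remark~\ref{remseg}(i) one checks that $U(x,y)=e$ would force $U(x,z)=e$ for some $z<y$; hence $U(x,y)\neq e$ as well, the characterizing multi-function satisfies $r(x)=\{y\}$ for all $x\in\cint{x_1,x_2}$, and by symmetry $\cint{x_1,x_2}\subseteq r(y)$. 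Finally, writing $y=U(y_1,y_2)$ with $y_1,y_2\in\opint{a,y}$ and using associativity, $U(x,y)=U(U(x,y_1),y_2)$ with $U(x,y_1)<e$ and $y_2\in\opint{e,y}$, so $U(x,y)\le y_2<y$ for every $x\in\cint{x_1,x_2}$.

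The contradiction should come from combining this with the t-norm side. The non-decreasing map $x\mapsto U(x,y)$ on $\cint{x_1,x_2}$ never takes the value $e$, so there is a threshold $x^{*}\in\cint{x_1,x_2}$ with $U(x,y)<e$ for $x<x^{*}$ and $U(x,y)>e$ for $x>x^{*}$; inspecting the one-sided limits of $u_y$ at $x^{*}$ (and that $U(z,y)\le U(x_1,y)<e$ for $z<x_1$, $U(z,y)\ge U(x_2,y)>e$ for $z>x_2$) shows $u_y$ is discontinuous at $x^{*}<e$, and since $x^{*}\in\cint{x_1,x_2}$ also $u_{x^{*}}$ is discontinuous at $y$, so by Proposition~\ref{idid} applied to $u_{x^{*}}$ the point $x^{*}$ is not idempotent. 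Now I would iterate the t-norm powers $U(x_1,\underbrace{x_2,\dots,x_2}_{n})$: these are all $<e$, each is still discontinuous at $y$ by the same computation (use $U(x_2,z)<e$ for $z<y$ to re-enter the region), and they accumulate at an idempotent $\iota<x_1$ (if they stabilized at an idempotent in $\cint{x_1,x_2}$, that idempotent would, by the squeeze, have a discontinuous $u$-function at $y$, contradicting Lemma~\ref{lemful}). By Lemma~\ref{lemful}, $u_{\iota}$ is $\min$ on $\opint{a,b}$ — the $\max$ alternative being excluded since the iterates dominate $u_{\iota}$ yet are $<e$ on $\opint{a,y}$ — so $U(\iota,\cdot)$ is continuous at $y$ with $U(\iota,y)=\iota\neq e$; but $\iota\in r(y)$ (because the iterates lie in $r(y)$ and $r(y)$ is an interval), hence $(\iota,y)\in G(r)$, forcing $u_y$ to be discontinuous at $\iota$ as well, contradicting Proposition~\ref{atmost} since $u_y$ is already discontinuous at $x^{*}>\iota$.

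The main obstacle is organizing this closing argument: making the accumulation of the t-norm powers at $\iota$ rigorous and separating the case where they stabilize at an idempotent of $\cint{x_1,x_2}$; and discharging uniformly the boundary cases $x^{*}\in\{x_1,x_2\}$ and, above all, the case in which $u_y$ happens to be continuous on $\opint{x_1,x_2}$ (so that $U(\cdot,y)$ is one-signed there and no interior threshold exists) — this case needs a parallel treatment going through the t-conorm powers of points of $\opint{y,b}$. All the genuine work lies there and rests on the fine structure of $\mathcal{UR}$ from \cite{mulfun}; the earlier steps — the reduction, the squeeze, $r(x)=\{y\}$ on $\cint{x_1,x_2}$, and $U(x,y)<y$ — are routine once Propositions~\ref{atmost} and~\ref{idid} and Lemma~\ref{lemful} are in hand.
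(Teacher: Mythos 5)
Your opening moves match the paper's: the squeeze showing $u_f$ is non-continuous in $y$ for every $f\in\cint{x_1,x_2}$, the disposal of the case where $\cint{x_1,x_2}$ contains an idempotent via Proposition~\ref{idid}, and the identification of the maximal idempotent-free intervals. But the closing argument — where, as you concede, ``all the genuine work lies'' — is not carried out, and the route you sketch for it is not the one that works. The paper's key step, which you are missing, is to take \emph{both} maximal idempotent-free intervals, $\opint{a,b}\ni x_1,x_2$ below $e$ and $\opint{c,d}\ni y$ above $e$, and restrict $U$ to $(\lint{a,b}\cup\{U(b,c)\}\cup\rint{c,d})^2$, which is closed under $U$ by Theorem~\ref{th51} and Proposition~\ref{restr}. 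Undoing the transformation \eqref{utra} yields a uninorm $U^*$ whose underlying t-norm and t-conorm are continuous and \emph{Archimedean}, and Propositions~\ref{sumstrp} and~\ref{pron} classify these: $U^*$ is representable or lies in $\mathcal{U}_{\min}\cup\mathcal{U}_{\max}\cup\mathcal{N}_{\min}\cup\mathcal{N}_{\max}$, so $u^*_{v}$ is continuous on $\opint{e,1}$ for every $v\in\opint{0,e}$. Hence the two transported discontinuities can only survive as $U^*(v_1,w)=U^*(v_2,w)=e$, which is absurd because then $v_1=U^*(v_1,v_2,w)=v_2$. This two-line finish is exactly what your plan lacks.

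The substitute ending you propose has concrete defects beyond incompleteness. The threshold $x^*$ with $U(x,y)<e$ for $x<x^*$ and $U(x,y)>e$ for $x>x^*$ need not lie in $\cint{x_1,x_2}$ at all (the map $x\mapsto U(x,y)$ can be one-signed there), and you acknowledge that this case, the boundary cases, and the rigorous accumulation of the t-norm powers at $\iota$ are all open. Moreover, the step ``$\iota\in r(y)$, hence $(\iota,y)\in G(r)$, forcing $u_y$ to be discontinuous at $\iota$'' is invalid: as the paper's remark following the characterization theorem from \cite{mulfun} stresses, membership in the graph of the characterizing multi-function does \emph{not} imply discontinuity at that point, so no contradiction with Proposition~\ref{atmost} follows. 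Without the reduction to the Archimedean block and the classification results, the proof does not close.
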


\begin{proof} Assume that $y$ is not an idempotent element.
Since $U\in \mathcal{U}$ we have either $x_1<x_2\leq e$ or $e\leq x_1<x_2.$ We will suppose $x_1<x_2\leq e$  as the other case is analogous. Then also for all $f\in \cint{x_1,x_2}$ the function $u_f$ is non-continuous in $y$ and thus if there is an idempotent in $\cint{x_1,x_2}$ Proposition \ref{idid} implies that $y$ is an idempotent element. Assume the opposite and let $a,b\in \cint{0,e}$ be the idempotent elements such that $x_1,x_2 \in \opint{a,b}$ and there is no idempotent element in $\opint{a,b}.$
 Thus $u_a$ is non-continuous in point $y_1$ and $u_b$ in $y_2$ with $y_1>y>y_2,$
where $y_1$ and $y_2$ are idempotents.

 Now let $c,d\in \cint{e,1}$ be the idempotent elements such that $y\in \opint{c,d}$
and there is no idempotent element in $\opint{c,d}.$  Then $ y_2\leq c<d\leq y_1.$
Since according to Proposition \ref{restr} $(\lint{a,b} \cup \{U(b,c)\}\cup \rint{c,d})^2$ is closed under $U$ we can use a backward transformation $f^{-1}$ to the transformation given in \eqref{utra}. Thus we obtain a uninorm $U^*$ with continuous Archimedean underlying t-norm and t-conorm and there exist $v_1,v_2\in \cint{0,e},$ $v_1<v_2$ and $w\in \opint{e,1}$ such that either $U^*(v_1,w)=U^*(v_2,w)=e$ or $u^*_{v_1}$ and $u^*_{v_2}$ are non-continuous in $w.$ However, Propositions \ref{sumstrp} and \ref{pron} imply that $u^*_{v_1}$ and $u^*_{v_2}$ are continuous on $\opint{e,1}$ and thus we get $U^*(v_1,w)=U^*(v_2,w)=e$ what is not possible since then $v_1=U^*(v_1,v_2,w)=v_2$ what is a contradiction.
 \end{proof}

We again recall two results from \cite{mulfun}.

\begin{lemma}
Let $U\colon \uint^2\lran \uint$ be a uninorm, $U\in \mathcal{U}.$ Let $u_{x}$ be non-continuous in $y_1$ and $u_{y_2}$ be non-continuous in $x$ for some $y_1\neq y_2.$ Then for all $y\in \rint{y_1,y_2}$ ($y\in \lint{y_2,y_1}$) the function $u_y$ is non-continuous in $x.$ \label{lemconv}
\end{lemma}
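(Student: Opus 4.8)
The plan is to prove this directly, by contradiction, using Proposition~\ref{atmost} twice---once for each of the two hypotheses---together with the monotonicity of $U$. By symmetry it suffices to treat the case $y_1<y_2$, so that $y\in\rint{y_1,y_2}$; the case $y_2<y_1$, with $y\in\lint{y_2,y_1}$, is entirely analogous, the only change being that left limits are everywhere replaced by right limits. For $y=y_2$ there is nothing to prove, since $u_{y_2}$ is non-continuous at $x$ by hypothesis, so one fixes $y$ with $y_1<y<y_2$ and assumes, towards a contradiction, that $u_y$ is continuous at $x$.

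The core step is then short. Since $u_x$ is non-continuous at $y_1$, Proposition~\ref{atmost} yields $U(x,z)>e$ for all $z>y_1$; as $y>y_1$ this gives $U(y,x)=U(x,y)>e$. Since $u_{y_2}$ is non-continuous at $x$, Proposition~\ref{atmost} yields $U(y_2,z)<e$ for all $z<x$; since $U$ is non-decreasing in its first argument and $y<y_2$, it follows that $U(y,z)\le U(y_2,z)<e$ for all $z<x$, hence the left limit (which exists by monotonicity) satisfies $\lim_{z\to x^-}U(y,z)\le e$. But continuity of $u_y$ at $x$ would force $U(y,x)=\lim_{z\to x^-}U(y,z)\le e$, contradicting $U(y,x)>e$; so $u_y$ is non-continuous at $x$, as required. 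Note that---unlike the neighbouring statements---this argument uses no idempotency of $x$, $y_1$ or $y_2$, and not even the ``at most one point of discontinuity'' clause of Proposition~\ref{atmost}: only the separation of each slice into the region where it lies below $e$ and the region where it lies above $e$.

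The one point calling for extra care, which I expect to be the main obstacle in a full write-up, is the boundary situation $x=0$ (and, symmetrically, $x=1$): there continuity of $u_y$ at $x$ is only one-sided and the left limit used above is unavailable. The plan for that case is to observe first that $U(y_2,0)=U(0,y_2)=u_x(y_2)>e$---again from Proposition~\ref{atmost} applied to $u_x$ at $y_1<y_2$---so that non-continuity of $u_{y_2}$ at $0$ is a genuine upward jump $U(y_2,0)<\lim_{z\to 0^+}U(y_2,z)$, and then to transfer this jump to the slice $u_y$ using monotonicity of $U$ in the first argument together with the boundary-continuity dichotomy for uninorms in $\mathcal{U}$ recalled in Remark~\ref{remseg}; the interior case $0<x<1$ is immediate from the core step above.
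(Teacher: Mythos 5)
The paper does not actually prove this lemma --- it is quoted from \cite{mulfun} without proof --- so there is no in-paper argument to measure yours against; I can only assess the proposal on its own terms. Your core step is correct and complete for every $x$ at which the one-sided limit you use exists: for $y_1<y_2$ and $y\in\opint{y_1,y_2}$, Proposition \ref{atmost} gives $U(x,y)>e$ from the jump of $u_x$ at $y_1$, and $U(y,z)\le U(y_2,z)<e$ for all $z<x$ from the jump of $u_{y_2}$ at $x$, so $\lim_{z\to x^-}U(y,z)\le e<U(y,x)$ and $u_y$ jumps at $x$. This settles all $x\in\rint{0,1}$, and the mirror argument settles all $x\in\lint{0,1}$ when $y_2<y_1$; the endpoint $y=y_2$ is the hypothesis itself.

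The genuine weak point is the boundary case you flag ($x=0$ with $y_1<y_2$, symmetrically $x=1$ with $y_2<y_1$): the mechanism you sketch there would fail. Monotonicity in the first argument gives $u_y\le u_{y_2}$ pointwise for $y<y_2$, so an upward jump of the \emph{larger} slice $u_{y_2}$ at $0$ cannot be ``transferred'' down to the smaller slice $u_y$ --- the inequality points the wrong way, and nothing prevents the smaller slice from being continuous while the larger one jumps. Fortunately the case is vacuous, and the Remark you cite is precisely what shows this, though not in the way you describe: if $x=0$ and $u_0$ jumps at $y_1$, then $U(0,z)\le z$ for $z\ge e$ and $U(0,z)=0$ for $z\le e$ force $y_1\ge e$ and $U$ disjunctive, and Proposition \ref{atmost} gives $U(0,w)>e$ for every $w>y_1$. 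Picking any $w\in\opint{y_1,y_2}$, so that $e\le y_1<w<y_2\le 1$ and $U(w,0)>e$, the disjunctive dual of Remark \ref{remseg}(i) makes $U$ continuous at every point of $\rint{w,1}\times\uint$, in particular at $(y_2,0)$, contradicting the assumed discontinuity of $u_{y_2}$ at $0$. Replace the ``transfer the jump'' plan by this impossibility argument and the proof is complete.
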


\begin{lemma}
Let $U\colon \uint^2\lran \uint$ be a uninorm, $U\in \mathcal{U}.$ Assume  $x<e$ ($x>e$) such that  $u_{x}$  is continuous on $\uint$ and let $u_{y}$ be non-continuous in $x.$ Then for all $q\in \cint{y,1}$  ($q\in \cint{0,y}$) the function $u_q$ is non-continuous in $x.$ \label{lemconv2}
\end{lemma}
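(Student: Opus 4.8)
The plan is to treat the case $x<e$ in full (the case $x>e$ is entirely dual, with $\min$/$\max$ and $0$/$1$ interchanged). First I would record two preliminaries. Since $U\in\mathcal U$, the restriction of $U$ to $\cint{0,e}^2$ is a continuous t-norm, hence continuous near $(y,x)$ whenever $y\le e$; since $u_y$ is non-continuous in $x<e$ this forces $y>e$. Next, if $y$ is idempotent, Proposition \ref{idid} applied to $u_y$ already gives that $x$ is idempotent, so I may assume $y$ is not idempotent and therefore lies in a maximal idempotent-free open interval $\opint{c,d}\subseteq\cint{e,1}$; and if $x$ were not idempotent then $x$ would likewise lie in a maximal idempotent-free open interval $\opint{a,b}\subseteq\cint{0,e}$. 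The crux of the proof is to show that this cannot happen, that is, that $x$ is idempotent.

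So suppose $x\in\opint{a,b}$ and $y\in\opint{c,d}$, with $a,b,c,d$ idempotent. By Proposition \ref{restr} the set $(\lint{a,b}\cup\{U(b,c)\}\cup\rint{c,d})^2$ is closed under $U$, so, exactly as in the proof of Proposition \ref{proidc}, the backward transformation $f^{-1}$ of \eqref{utra} turns $U$ restricted to that set into a uninorm $U^*\in\mathcal U$ on $\uint^2$ whose underlying t-norm and t-conorm are continuous and Archimedean. By Propositions \ref{sumstrp} and \ref{pron}, $U^*$ is then either a representable uninorm or a member of $\mathcal U_{\min}\cup\mathcal U_{\max}\cup\mathcal N_{\min}\cup\mathcal N_{\max}$. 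In the $\min$/$\max$ cases $U^*$ coincides with $\min$ (respectively $\max$) on the relevant off-diagonal block, which translates back to $U(y,z)=z$ (respectively $U(y,z)=y$) for all $z\in\opint{a,b}$; both possibilities contradict Proposition \ref{atmost}, which forces $U(y,z)<e$ for $z<x$ and $U(y,z)>e$ for $z>x$. In the representable case $U^*$ is continuous off the two corners, and tracing the curve $z\mapsto U(x,z)$ through $f$ shows that $u_x$ has a jump at the point $z_0\in\opint{c,d}$ with $U(x,z_0)=U(b,c)$ — its one-sided limits there being $b$ and $c$ — unless $b=c=e$; and when $b=c=e$ the same tracing shows instead that $u_y$ is continuous in $x$. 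Each alternative is impossible, so $x$ is idempotent.

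With $x<e$ idempotent I would next pin down $u_x$ completely. Since $U$ is internal on $\{x\}\times\uint$ (by the recalled lemma on idempotent points), $U(x,z)\in\{x,z\}$; combining this with $U(x,z)\le\min(x,z)$ on $\cint{0,e}^2$ and with the observation that on $\{x\}\times\cint{e,1}$ a switch from the value $x$ to the value $z$ would create a jump (since $u_x(e)=x$), continuity of $u_x$ forces $U(x,z)=\min(x,z)$ for all $z\in\uint$; in particular $U(x,q)=x$ for every $q\ge e$. Now fix $q\in\cint{y,1}$, so $q\ge y>e$ (the case $q=y$ being the hypothesis). By monotonicity in the first variable and Proposition \ref{atmost}, $U(q,z)\ge U(y,z)>e$ for all $z>x$. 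If, on the other hand, $U(q,z_0)\ge e$ for some $z_0<x$, then, using $U(x,\cdot)=\min(x,\cdot)$, associativity and $U(x,q)=x$, we obtain $x=\min\bigl(x,U(q,z_0)\bigr)=U\bigl(x,U(q,z_0)\bigr)=U\bigl(U(x,q),z_0\bigr)=U(x,z_0)=\min(x,z_0)=z_0<x$, a contradiction; hence $U(q,z)<e$ for all $z<x$. Since $U(q,x)=U(x,q)=x<e\le\lim_{z\to x^+}U(q,z)$, the function $u_q$ is not right-continuous at $x$, so it is non-continuous in $x$. The case $x>e$ is handled by the dual computation ($U(x,z)=\max(x,z)$ for all $z$, $q\in\cint{0,y}$, and failure of left-continuity at $x$).

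The main obstacle is the idempotency step: the hypotheses look innocuous but are in fact very restrictive, and it is precisely the assumed continuity of $u_x$ that eliminates the representable sub-case — everything afterwards is routine bookkeeping with Proposition \ref{atmost} and associativity. One should also watch the degenerate configurations $x=0$ and $y=1$, but these only shorten the argument.
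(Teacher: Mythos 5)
Your proof is correct. The paper itself gives no proof of this lemma --- it is recalled from the companion paper on the set of discontinuity points --- so there is nothing to compare against line by line; but your argument uses exactly the machinery this paper deploys for the sibling statements (Proposition \ref{proidc}, Corollary \ref{corid}): reduce to the case where $x$ and $y$ sit in maximal idempotent-free intervals, restrict $U$ to $(\lint{a,b}\cup\{U(b,c)\}\cup\rint{c,d})^2$ via Proposition \ref{restr}, transform to a uninorm with continuous Archimedean underlying operations, and run the trichotomy supplied by Propositions \ref{sumstrp} and \ref{pron}, with the $\min$/$\max$ branches killed by Proposition \ref{atmost} and the representable branch killed by the assumed continuity of $u_x$ (the induced jump of $u_x$ from $b$ to $c$ inside $\opint{c,d}$, unless $b=c=e$, in which case $u_y$ would be continuous at $x$). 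The conclusion that $x$ is idempotent, that internality on $\{x\}\times\uint$ plus continuity of $u_x$ forces $U(x,\cdot)=\min(x,\cdot)$, and the final monotonicity step are all sound. The only inefficiency is in the last paragraph: once $U(q,x)=U(x,q)=x<e$ and $U(q,z)\ge U(y,z)>e$ for $z>x$ are in hand, the failure of right-continuity of $u_q$ at $x$ is already established, so the associativity computation ruling out $U(q,z_0)\ge e$ for $z_0<x$ is not needed.
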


Now we can show the following.

\begin{corollary}
Let $U\colon \uint^2\lran \uint$ be a  uninorm, $U\in \mathcal{U},$ and let $r$ be its characterizing multi-function. Then border points of all types of maximal segments of $r$ are idempotent points.   \label{corid}  \end{corollary}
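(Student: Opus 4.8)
The plan is to argue by cases according to the type of maximal segment of $r$ — strictly decreasing, horizontal, or vertical — and in each case show that a border point which fails to be idempotent would contradict one of the structural propositions about the functions $u_x$ already established (Propositions \ref{atmost}, \ref{idid}, \ref{proidc} and Lemmas \ref{lemconv}, \ref{lemconv2}). Recall from Remark \ref{remseg} that every maximal segment corresponds to a closed interval, so it has two (possibly coinciding) border points, and that the graph $R=G(r)$ is exactly the set of points where $U$ is discontinuous together with the points where $U$ takes the value $e$; in particular $(x,y)\in R$ forces $u_x$ to be non-continuous in $y$ unless $U(x,y)=e$ on a whole rectangle, a situation described explicitly in Remark \ref{remseg}(i) and easily handled separately.

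First I would treat a \emph{horizontal} segment corresponding to an interval $Z$ with $r(x)=\{y_0\}$ for all $x\in\mathrm{int}(Z)$. For every such $x$ the function $u_x$ is non-continuous in the same point $y_0$ (or $U(x,y_0)=e$ on the relevant rectangle). Taking $x_1<x_2$ in $\mathrm{int}(Z)$, Proposition \ref{proidc} gives immediately that $y_0$ is an idempotent element — i.e. the horizontal level $y_0$ is idempotent. Then, letting $x^-$ be the left border point of $Z$: by Lemma \ref{lemconv2} (or a direct limit/monotonicity argument using that $U$ is left- or right-continuous at each point by the theorem of \cite{mulfun}) the function $u_{x^-}$ is still non-continuous in $y_0$, so $(x^-,y_0)\in R$; but if $x^-$ were not idempotent, there would be an idempotent $a$ with (say) $x^-\in\opint{a,b}$ and no idempotent in $\opint{a,b}$, and applying the nilpotent/strict dichotomy of Lemma \ref{lemful} together with Proposition \ref{idid} to the idempotent level $y_0$ would force $u_{x^-}$ to be internal and in fact continuous at $y_0$ — contradiction. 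The right border point is symmetric, and by symmetry of $r$ the analogous claim for the two endpoints of the horizontal level $y_0$ (a vertical segment of $r$, if $\mathrm{Card}(r)>1$ there) follows as well.

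A \emph{strictly decreasing} segment corresponds to a closed interval $S$ with $\mathrm{Card}(r(x))=1$ on $\mathrm{int}(S)$ and $r$ strictly decreasing there. Here I would use Lemma \ref{lemconv}: if $x\in\mathrm{int}(S)$ with $y:=r(x)$, then $u_x$ is non-continuous in $y$ and, by symmetry, $u_y$ is non-continuous in $x$; picking a nearby $x'\in\mathrm{int}(S)$ with $y':=r(x')\neq y$ we get $u_x$ non-continuous in $y$ and $u_{y'}$ non-continuous in $x$ with $y\ne y'$, so Lemma \ref{lemconv} says $u_q$ is non-continuous in $x$ for all $q$ between $y$ and $y'$ — forcing, via Proposition \ref{atmost}, a definite sign pattern that pins down the limiting behaviour at the border point $x^-=\inf S$. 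The border point $(x^-,r(x^-))$ of the segment is then the limit of points of $R$; suppose $x^-$ is not idempotent and let $a<x^-<b$ be consecutive idempotents with no idempotent between. On $\cint{a,b}$ the uninorm restricts (by Theorem \ref{th51} / Proposition \ref{restr}) so that $U\vert$ is either a nilpotent or strict t-norm (or t-conorm, in the symmetric case), and Lemma \ref{lemful} then says that for the idempotent $c$ on the conorm side the value $U(c,\cdot)$ is purely $\min$ or purely $\max$ on the whole interval $\opint{a,b}$ — which is incompatible with a strictly decreasing segment actually having an endpoint strictly inside $\opint{a,b}\times(\text{something})$. Hence $x^-$ is idempotent, and likewise $\sup S$; by symmetry of $r$ the two ``ends'' of the segment are border points of the same segment, so nothing further is needed.

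The main obstacle I expect is the bookkeeping at a border point where several maximal segments of different types meet (a decreasing segment ending where a horizontal or vertical one begins): one must be sure that the border value of $r$ is read off consistently from both sides and that the one-sided continuity guaranteed by \cite{mulfun}'s theorem is used in the correct direction so that the limit point really lies in $R$. Once that is set up cleanly, the contradiction in each case is driven by exactly one tool — Proposition \ref{proidc} for horizontal segments, Lemma \ref{lemful} (the nilpotent case) for the others — and the symmetry of $r$ lets us pass from a segment's endpoint to the endpoints of the ``reflected'' segment with no extra work. I would therefore structure the write-up as: (1) reduce to the case $U(x,y)=e\Rightarrow x=y=e$ or handle the rectangle exception from Remark \ref{remseg}(i) directly; (2) horizontal (hence vertical) segments via Proposition \ref{proidc} and Lemma \ref{lemful}; (3) strictly decreasing segments via Lemmas \ref{lemconv}, \ref{lemconv2} and Lemma \ref{lemful}.
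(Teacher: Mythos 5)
Your skeleton is right in outline --- Proposition \ref{proidc} does give that the level $y_0$ of a horizontal segment is idempotent, vertical segments do follow by symmetry of $r$, and the paper disposes of strictly decreasing segments almost for free at the end (their endpoints are shared with adjacent horizontal/vertical segments because the graph of $r$ is a connected line and the segments are maximal, so no separate argument via Lemma \ref{lemful} is needed there). But the crucial step, showing that the endpoints $a,b$ of the domain interval of a horizontal segment are idempotent, is where your justification breaks down. You claim that Lemma \ref{lemful} together with Proposition \ref{idid} ``would force $u_{x^-}$ to be internal and in fact continuous at $y_0$ --- contradiction.'' Neither half of this works: Proposition \ref{idid} only applies when its argument is an idempotent point, and $x^-$ is precisely assumed not to be one; Lemma \ref{lemful} constrains $U(y_0,\cdot)$ to be uniformly $\min$ or uniformly $\max$ on the idempotent-free interval around $x^-$, but yields no continuity statement about $u_{x^-}$ at $y_0$. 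Moreover, even if $u_{x^-}$ were continuous at $y_0$ this would not by itself be a contradiction, since (as Remark \ref{remseg}(i) stresses) points of $G(r)$ need not be discontinuity points of $U$.

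The paper closes this gap with a different mechanism: assuming $a$ is not idempotent, it takes the surrounding idempotents $a_1<a<b_1$ and $c_1>y$, restricts $U$ to $(\lint{a_1,b_1}\cup\{U(b_1,y)\}\cup\rint{y,c_1})^2$ via Proposition \ref{restr}, and applies the classification of uninorms with Archimedean underlying operations (Propositions \ref{sumstrp} and \ref{pron}) to conclude that the restricted uninorm equals $\max$ on the whole mixed region. That forces $u_{a^*}$ to be discontinuous at $y$ for every $a^*\in\opint{a_1,a}$, i.e.\ the horizontal segment extends past $a$, contradicting its maximality; hence $a=a_1$ is idempotent. Your Lemma-\ref{lemful} route could in principle be repaired (the $\max$ branch contradicts monotonicity of $r$ directly; the $\min$ branch needs an Archimedean iteration plus the closure property of Theorem \ref{th51} to propagate the jump at $y_0$ leftwards past $x^-$), but as written the contradiction you point to does not exist, and the analogous appeal to Lemma \ref{lemful} for strictly decreasing segments does not produce any incompatibility at all --- there you genuinely need the representability classification, or the paper's connectedness shortcut.
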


\begin{proof} Let the interval $\cint{a,b}$ correspond to some maximal horizontal segment of the characterizing multi-function $r$ of $U,$ i.e., for some $y\in \uint$ we have $r(x)=\{y\}$  for all $x\in \opint{a,b}.$ Also $y\notin r(x)$ for all
$x\in \uint\setminus \cint{a,b}.$ The Proposition \ref{proidc} implies that $y$ is an idempotent element.
If $u_y$ is continuous then Lemma \ref{lemconv} implies either $a=0$ or $b=1.$
If $u_y$ is non-continuous then Lemma \ref{lemconv} implies that $u_y$ is non-continuous either in $a$ or in $b,$
i.e., in all cases at least one of $a$ and $b$ is an idempotent element.

We will suppose $y\geq e$ (the case for $y\leq e$ is analogous). Then $a<b\leq e.$

If $a$ ($b$) is not an idempotent point
then taking the idempotent points $a_1,b_1,c_1$ such that $a\in \opint{a_1,b_1}$ ($b\in \opint{a_1,b_1}$)
and $c_1>y$ ($c_1<y$) and there is no idempotent point in $\opint{a_1,b_1}$ and in  $\opint{y,c_1}$ ($\opint{c_1,y}$)
the uninorm $U$ on $(\lint{a_1,b_1} \cup \{U(b_1,y)\} \cup \rint{y,c_1})^2$ (on $(\lint{a_1,b_1} \cup \{U(b_1,c_1)\} \cup \rint{c_1,y})^2$) is isomorphic with some uninorm $U^*$ with continuous Archimedean underlying t-norm and t-conorm such that there exists a point
$q\in \opint{0,e}$ with $U^*(q,p)>e$ for all $p>e$ ($U^*(q,p)<e$ for all $p<1$). Due to Propositions \ref{sumstrp} and \ref{pron} we get $U^*(x,y)=\max(x,y)$ ($U^*(x,y)=\min(x,y)$) for all $x\in \opint{0,e},$ $y\in \opint{e,1},$
however, then for all $a^*\in \opint{a_1,a}$ ( $b^*\in \opint{b,b_1}$) the function $u_{a^*}$ ($u_{b^*}$) is non-continuous in $y$ and we get $a=a_1$ ($b=b_1$). Thus both $a$ and $b$ are idempotent points.

Due to the symmetry of  the characterizing multi-function $r$ we can obtain the similar result also for vertical segments and then, since the graph of the  characterizing multi-function $r$ is a connected line, necessarily border points for all
types of segments are idempotents.

\end{proof}

\begin{lemma}
Let $U\colon \uint^2\lran \uint$ be a  uninorm, $U\in \mathcal{U},$ and let $a,b\in \uint,$ $a<b$ be idempotent points such that there is no idempotent in $\opint{a,b}.$ Then if there is a $y\in \uint$ such that the functions $u_{x_1}$ and $u_{x_2}$  for some $x_1,x_2\in\opint{a,b},$
$x_1<x_2,$ are non-continuous in $y$ then $U$ is pseudo-internal  on $\cint{a,b}\times \uint.$ \label{lemhoin}
\end{lemma}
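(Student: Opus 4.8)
\section*{Proof plan}

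\begin{proof}[Proof plan]
The plan is to reduce matters to the classification of uninorms in $\mathcal{U}$ with Archimedean underlying operations (Propositions \ref{sumstrp} and \ref{pron}) applied to suitable closed restrictions of $U$. Since $a,b$ are idempotent and $e$ is always idempotent, we may assume $b\le e$ (the case $a\ge e$ is symmetric), so that pseudo-internality on $\cint{a,b}\times\uint$ means $U(x,z)\in\{x,z\}$ for all $x\in\cint{a,b}$, $z\in\cint{e,1}$ (and, by commutativity, for $\cint{e,1}\times\cint{a,b}$). The cases $x=a$ and $x=b$ are immediate because $a,b$ are idempotent and $U$ is internal on $\{a\}\times\uint$ and on $\{b\}\times\uint$; so fix $x\in\opint{a,b}$. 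As $\opint{a,b}$ contains no idempotent, $U(w,w)\neq w$ for all $w\in\opint{a,b}$; Proposition \ref{proidc} gives that $y$ is idempotent, and Proposition \ref{atmost} applied to $u_{x_1}$ (using $U(x_1,e)=x_1<e$) gives $y\ge e$.

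The first and most delicate step is the claim that $u_x$ is discontinuous at $y$ for \emph{every} $x\in\opint{a,b}$. Points of discontinuity of $U$ lie on the graph $G(r)$ of its characterizing multi-function $r$; since $(x_1,y),(x_2,y)\in G(r)$ with $x_1<x_2$ and $r$ is non-increasing, one gets $r(x)=\{y\}$ for all $x\in\cint{x_1,x_2}$, so $\cint{x_1,x_2}$ lies in the interior of a maximal horizontal segment $\cint{a',b'}$ of $r$. By Corollary \ref{corid} the endpoints $a',b'$ are idempotent, and since $\opint{a,b}$ has no idempotent while $a,b$ are idempotent, necessarily $a'\le a<b\le b'$, whence $r(x)=\{y\}$ for all $x\in\opint{a,b}$. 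For $x\in\opint{x_1,x_2}$ the inequality $U(x_1,z)\le U(x,z)\le U(x_2,z)$ together with Proposition \ref{atmost} applied to $u_{x_1},u_{x_2}$ forces $U(x,z)<e$ for $z<y$ and $U(x,z)>e$ for $z>y$, so $u_x$ cannot be continuous at $y$; a further monotonicity and comparison argument — with the only genuinely subtle point being the exceptional situation of Remark \ref{remseg}(i), where $U$ takes the value $e$ on a whole square — extends this to all $x\in\opint{a',b'}\supseteq\opint{a,b}$. Hence, by Proposition \ref{atmost}, for every $x\in\opint{a,b}$ we have $U(x,z)<e$ for $z<y$ and $U(x,z)>e$ for $z>y$. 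I expect this step (moving from ``$x_1,x_2$'' to ``all of $\opint{a,b}$'', and handling the Remark \ref{remseg}(i) degeneracy) to be the main obstacle; once it is in place, the rest is routine.

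Next I treat the right-hand side $z\ge y$. For $z=y$ internality holds since $y$ is idempotent. Let $d$ be the smallest idempotent $>y$ (the idempotent set is closed); then $\opint{y,d}$ has no idempotent, so $U\vert_{\cint{y,d}^2}$ is a continuous Archimedean t-conorm. By Proposition \ref{restr}, $(\lint{a,b}\cup\{U(b,y)\}\cup\rint{y,d})^2$ is closed under $U$; transforming by the inverse of \eqref{utra} yields a uninorm $U^{*}\in\mathcal{U}$ whose underlying t-norm and t-conorm are continuous and Archimedean (transforms of $U\vert_{\cint{a,b}^2}$ and $U\vert_{\cint{y,d}^2}$). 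Tracing $u_{x_1}$ through the transform shows it is discontinuous at an interior point of $\uint$ (the image of $y$), so $U^{*}$ is not representable; by Propositions \ref{sumstrp}, \ref{pron} and the remark following them, $U^{*}\in\mathcal{N}_{\min}\cup\mathcal{N}_{\max}\cup\mathcal{U}_{\min}\cup\mathcal{U}_{\max}$, hence $U^{*}$ is pseudo-internal. Transforming back, $U(x,z)\in\{x,z\}$ for $x\in\lint{a,b}$, $z\in\rint{y,d}$, and since $U(x,z)>e>x$ this forces $U(x,z)=z$; in particular $U(x,d)=d$. For $z>d$ use that $d>e$ is idempotent, so $U(d,z)=z$, and associativity gives $U(x,z)=U(x,U(d,z))=U(U(x,d),z)=U(d,z)=z$.

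Finally I treat the left-hand side $e\le z<y$. For $z=e$ this is the neutral element. For $z$ idempotent with $e<z<y$ we have $U(x,z)<e$ and, by idempotency of $z$, $U(x,z)\in\{x,z\}$, so $U(x,z)=x$ (as $z>e$). If $z$ lies in a block $\opint{p,q}$ between consecutive idempotents with $e\le p<q\le y$, then if $q<y$ both $U(x,p)$ and $U(x,q)$ are $<e$ and lie in $\{x,p\}$, $\{x,q\}$ respectively, hence equal $x$, and monotonicity gives $U(x,z)=x$; if $q=y$, then $\opint{p,y}$ has no idempotent, and I restrict $U$ to $(\lint{a,b}\cup\{U(b,p)\}\cup\rint{p,y})^2$ and transform as before, distinguishing whether $u_x$ is left-continuous at $y$. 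If it is not, then the transform of $u_x$ is discontinuous at $1$, the transformed uninorm is not representable, hence pseudo-internal, and transforming back forces $U(x,z)=x$ for $z\in\opint{p,y}$. If $u_x$ is left-continuous at $y$, then combined with $U(x,z')=z'$ for $z'>y$ (previous step) and Proposition \ref{atmost} one gets $U(x,y)=x$, and then $U(x,p)=x=U(x,y)$ yields $U(x,z)=x$ by monotonicity. This exhausts all $z\in\cint{e,1}$, which together with commutativity gives the desired pseudo-internality of $U$ on $\cint{a,b}\times\uint$.
\end{proof}
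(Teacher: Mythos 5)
Your plan is correct and follows essentially the same route as the paper: both reduce to the blocks between consecutive idempotents, invoke Propositions \ref{sumstrp} and \ref{pron} (via Proposition \ref{restr}) to conclude that each such block is either representable or pseudo-internal, and rule out the representable case because it would force the characterizing multi-function to be strictly decreasing on $\opint{a,b}$, which is incompatible with $u_{x_1}$ and $u_{x_2}$ sharing the discontinuity ordinate $y$. The paper runs this as a short contradiction argument, whereas you verify internality directly for each $z\in\cint{e,1}$, but the key ingredients are identical.
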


\begin{proof} Assume $a<b\leq e$ (the case when $e\leq a<b$ is analogous). Then we want to show that $U$ is internal on $\cint{a,b}\times \cint{e,1}.$ In the opposite case Propositions \ref{sumstrp} and \ref{pron} imply that  there exist idempotent points $c,d$ such that on $(\lint{a,b} \cup \{U(b,c)\}\cup \rint{c,d})^2$  the uninorm $U$ is linearly isomorphic to a representable uninorm. Then, however, the characterizing multi-function of $U$ is strictly decreasing on $\opint{a,b}$ what is a contradiction.
\end{proof}

\begin{theorem}
Let $U\colon \uint^2\lran \uint$ be a  uninorm, $U\in \mathcal{U},$
where $e$ is the neutral element of $U.$
  Then  $U=V^{e},$ where $V^{e}=(\langle a_k,b_k,c_k,d_k,U_k \rangle \mid k\in K)^e$ is an extended ordinal sum of uninorms for some systems $(\opint{a_k,b_k})_{k\in K}$ and $(\opint{c_k,d_k})_{k\in K}$ satisfying all conditions of Proposition \ref{proext}, where
   for all $k\in K$ the uninorm $U_k$ is either internal (including the minimum t-norm and the maximum t-conorm), or representable (including continuous Archimedean t-norms  and t-conorms).
\end{theorem}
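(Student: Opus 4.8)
The plan is to read the decomposition off the characterizing multi-function $r$ of $U$ (which exists by the recalled theorem of \cite{mulfun}, since $U\in\mathcal{U}$) together with the idempotent structure of $U$. Since $T_U$ and $C_U$ are continuous, $U$ is continuous on $\cint{0,e}^2$ and on $\cint{e,1}^2$, so the set $E$ of idempotent points of $U$ is closed and contains $0,e,1$; write $\cint{0,e}\setminus E=\bigcup_{k}\opint{a_k,b_k}$ and $\cint{e,1}\setminus E=\bigcup_{k}\opint{c_k,d_k}$ as countable disjoint unions and let $\cint{p,q}$ range over the maximal nondegenerate subintervals of $E$. On each $\cint{a_k,b_k}^2$ (resp.\ $\cint{c_k,d_k}^2$) the uninorm $U$ is a linear rescaling of a continuous t-norm (t-conorm) taken between two consecutive idempotents, hence of a continuous \emph{Archimedean} t-norm (t-conorm); on each $\cint{p,q}^2$ the uninorm $U$ is internal, because every point of $\cint{p,q}$ is idempotent, so $U$ is internal on $\{x\}\times\uint$ for each $x\in\cint{p,q}$ by the recalled lemma, and associativity is then automatic by Proposition \ref{coroass1}.

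Next I assemble the summands from the maximal segments of $r$ and from the intervals $\cint{p,q}$. By Corollary \ref{corid} the border points of all maximal segments of $r$ are idempotents, and by Remark \ref{remseg} each such segment is strictly decreasing, horizontal, or vertical and corresponds to a closed interval. To a strictly decreasing segment, with $x$-projection $\cint{a,b}\subseteq\cint{0,e}$ and $y$-projection $\cint{c,d}\subseteq\cint{e,1}$, I associate the complete summand $\langle a,b,c,d,U_k\rangle$, where $U_k$ is obtained by pulling $U$ back along the inverse of the isomorphism \eqref{utra}; this is legitimate because $(\lint{a,b}\cup\{v\}\cup\rint{c,d})^2$ with $v=U(b,c)\in\{b,c\}$ is closed under $U$ by Proposition \ref{restr}. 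To a horizontal segment at height $y$ over an interval whose interior lies in $\cint{0,e}$, I associate, for each gap $\opint{a_k,b_k}$ it covers, the incomplete summand $\langle a_k,b_k,y,y,U_k\rangle$ with $U_k$ the rescaled continuous Archimedean t-norm $U|_{\cint{a_k,b_k}^2}$ (and symmetrically for vertical segments and t-conorm gaps), and to each $\cint{p,q}$ the internal summand with nontrivial support $\cint{p,q}$, matched with a degenerate interval on the opposite side of $e$. Using the monotonicity of $r$ and the fact that between two consecutive idempotents $U$ is \emph{uniformly} $\min$ or \emph{uniformly} $\max$ on the mixed region (Lemma \ref{lemful}), one checks that $\bigcup_k\cint{a_k,b_k}=\cint{0,e}$, $\bigcup_k\cint{c_k,d_k}=\cint{e,1}$, that the two systems are anti-comonotone, and that the remaining hypotheses of Propositions \ref{proorduni} and \ref{proext} hold.

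Now I identify the types of the $U_k$ and conclude $U=V^e$. For a complete summand we have $U_k\in\mathcal{U}$ with both $T_{U_k}$ and $C_{U_k}$ continuous Archimedean (they are rescalings of $U$ on idempotent-free squares), so Propositions \ref{sumstrp} and \ref{pron} force $U_k$ to be representable or to lie in $\mathcal{N}_{\min}\cup\mathcal{N}_{\max}\cup\mathcal{U}_{\min}\cup\mathcal{U}_{\max}$; in the latter case $U_k$ agrees on $\opint{0,1}^2$ with a $U_{\min}$ or $U_{\max}$ uninorm and decomposes, as in Example \ref{exmm}, into a continuous Archimedean t-norm plus a continuous Archimedean t-conorm, so transporting this one further refinement back through \eqref{utra} replaces $\langle a,b,c,d,U_k\rangle$ by two incomplete summands of the required type (the boundary discrepancies of $\mathcal{N}_{\min}/\mathcal{N}_{\max}$ being absorbed into the choice of the functions $g,h$ of Proposition \ref{proext} at the idempotents $0,1$); moreover a strictly decreasing segment of $r$ excludes pseudo-internality on the mixed region by Lemma \ref{lemhoin}, so this refinement is only ever needed for segments that are not strictly decreasing and the process does not repeat. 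Finally, equality $U=V^e$ holds on each block $(\lint{a_k,b_k}\cup\{v_k\}\cup\rint{c_k,d_k})^2$ by construction; at a ``cross'' point $(a,z)$ with $a\in E$ both values lie in $\{a,z\}$ (for $U$ by internality on $\{a\}\times\uint$, for $V^e$ by construction), and by Lemma \ref{lemful} the transition from $\min$ to $\max$ along $\{a\}\times\uint$ takes place at one of the candidate points $a_j,b_j,c_j,d_j$, so choosing $g,h$ in Proposition \ref{proext} to match this transition --- precisely the point of using the \emph{extended} rather than the plain ordinal sum, cf.\ Lemmas \ref{lem2} and \ref{lem2a} --- gives $U(a,\cdot)=V^e(a,\cdot)$. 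Since $U\in\mathcal{U}$, also $V^e\in\mathcal{U}$.

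The analytic content above (continuity and Archimedeanity of the pieces, and the classification of each $U_k$) is essentially immediate from the recalled results; the main obstacle is the combinatorial bookkeeping of the middle step, together with the verification in the last step that the $\min/\max$ switching behavior of $U$ along every idempotent slice is realizable by an admissible choice of the extended-ordinal-sum data $g,h$ --- the position of this switch being further constrained, when an underlying t-norm or t-conorm of a summand is nilpotent, by Lemmas \ref{lem2} and \ref{lem2a}.
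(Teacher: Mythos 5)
Your overall strategy coincides with the paper's: read the summand system off the characterizing multi-function $r$, use Corollary \ref{corid} to place all division points at idempotents, classify each block as representable or internal via Propositions \ref{sumstrp} and \ref{pron}, and absorb the $\min$/$\max$ switching along idempotent slices into the data $g,h$ of Proposition \ref{proext}. There is, however, a genuine gap in the middle step, namely in your treatment of nondegenerate intervals of idempotent points. You assign ``to each $\cint{p,q}$ the internal summand with nontrivial support $\cint{p,q}$, matched with a degenerate interval on the opposite side of $e$.'' This is wrong whenever $r$ is strictly decreasing on a subinterval of $\cint{p,q}$: there the restriction of $U$ to the corresponding block $(\lint{a,b}\cup\{v\}\cup\rint{c,d})^2$ is an s-internal uninorm whose $\min$/$\max$ boundary is a strictly decreasing curve crossing the mixed rectangle $\cint{a,b}\times\cint{c,d}$, and no (extended) ordinal sum in which $\cint{a,b}$ and $\cint{c,d}$ sit in \emph{different} summands, each completed by a degenerate interval, can reproduce that boundary --- in any such sum the mixed rectangle is uniformly $\min$ or uniformly $\max$ apart from idempotent slices. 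The simplest counterexample is the s-internal uninorm with $v_U(x)=1-x$ and $e=\tfrac{1}{2}$: every point is idempotent, so your $\cint{p,q}$ rule produces only incomplete $\min$/$\max$ summands, whose ordinal sum has an axis-parallel switching line and cannot equal $U$. The paper avoids this by subdividing the idempotent intervals according to the segment type of $r$ and pairing each strictly decreasing piece below $e$ with its image above $e$ into a single \emph{complete} s-internal summand. Your separate rule for strictly decreasing segments would in fact do this job, but as written it conflicts with the $\cint{p,q}$ rule and you never resolve the precedence; relatedly, your parenthetical claim that the underlying operations of every complete summand are Archimedean fails for these s-internal blocks, where $T_{U_k}=\min$.

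A second, more technical omission: the endpoints of the idempotent-free gaps may accumulate at points of the idempotent set that lie in no nondegenerate idempotent interval. Since you only take \emph{nondegenerate} maximal subintervals $\cint{p,q}$, such accumulation points are covered by no $\cint{a_k,b_k}$, so the identity $\bigcup_k\cint{a_k,b_k}=\cint{0,e}$ that you assert fails; the paper repairs this by adjoining countably many degenerate summands supported on $\lint{b_k,b_k}\cup\rint{c_k,c_k}$ at exactly these points. Apart from these two issues (and the fact that your third-paragraph detour through $\mathcal{N}_{\min}\cup\mathcal{N}_{\max}\cup\mathcal{U}_{\min}\cup\mathcal{U}_{\max}$ is vacuous, since a complete summand arising from a strictly decreasing segment is already representable or s-internal), your argument tracks the paper's proof, including the concluding verification that the switching point of $U$ along each idempotent slice is realizable by an admissible choice of $g$ and $h$.
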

The proof of this result can be found in Appendix.

\begin{remark}
The previous result shows that each uninorm $U\in \mathcal{U}$ can be decomposed into ordinal sum of semigroups in the sense of Clifford, where the finest decomposition is such where all irreducible idempotent elements $b,$  i.e., such that $U(x,y)=b$ implies $b\in \{x,y\},$ are supports of a separate (one element) semigroups.  The remaining semigroups then contain at most one idempotent element. If $S$ is a subsemigroup of $(\uint,U)$ then  a similar result can be obtained also for all semigroups isomorphic with $S.$

\end{remark}

\section{Conclusions}

Each continuous t-norm (t-conorm) is equal to an ordinal sum of continuous Archimedean t-norms (t-conorms). In this paper we have extended this characterization onto uninorms with continuous  underlying t-norm and t-conorm. Using the characterizing multi-function we have shown that such a uninorm  can be decomposed into extended ordinal sum of representable uninorms, continuous Archimedean t-norms, continuous Archimedean t-conorms and internal uninorms. This result together with the properties of the characterizing multi-function offer a complete characterization of  uninorms from $\mathcal{U},$ i.e., of uninorms with continuous underlying t-norm and t-conorm.  The applications of these results are expected in all domains where uninorms are used.

 \vskip0.3cm \noindent{\bf \large Acknowledgement} \hskip0.3cm  This work was supported by grant  VEGA 2/0049/14, APVV-0178-11,
 and Program Fellowship of SAS.

\section{Appendix}

{\bf Proof of Proposition $11.$}

\begin{proof} First let us note that the ordinal sum construction ensures that if $a$ is an idempotent element of $U^e$  ($V^e$) then $U^e$ ($V^e$) is internal on $\{a\} \times \uint.$
The commutativity of $V^e$ is obvious. Further, $G\cap H = \emptyset$ and for $x\in G$ we have $g(x)=\lint{0,q_x}$ or $g(x)=\cint{0,q_x}$ for some idempotent point $q_x\geq e,$ and for $y\in H$ we have $h(y)=\lint{0,q_y}$ or $h(y)=\cint{0,q_y}$ for some idempotent point $q_y\leq e.$
Since $e\notin G\cup H$ we have $V^e(e,x)=x$ if $x\in \uint\setminus (G\cup H).$  If $x\in G$ then $e\in g(x)$ if $q_x>e,$ i.e., $V^e(e,x)=\min(x,e)=x.$
Assume $q_e=e.$ Then $g(x)=\cint{0,e}$ (since $c_k\geq e$ for all $k\in K$) and we have again $V^e(e,x)=\min(x,e)=x.$
If $y\in H$ then if $q_y<e$ we have $e\notin h(y)$ and  $V^e(e,y)=\max(y,e)=y.$ Now assume $q_y=e.$ Then $h(y)=\lint{0,e}$ (since $b_k\leq e$ for all $k\in K$), i.e., again
$e\notin h(y)$ and  $V^e(e,y)=\max(y,e)=y.$ Thus $e$ is a neutral element of  $V^e.$

\

In order to show that $V^e$ is non-decreasing in both coordinates it is sufficient to show that all $v_x$ for $x\in \uint,$ where
$v_x(y)=V^e(x,y)$ are non-decreasing. Assume any $x\in \uint.$ If $x=e$ then $v_x$ is evidently non-decreasing. Now assume $x<e$ (the case when $x>e$ is analogous). If $x\in G$ then $v_x(y)=\min(x,y)$ if $y\in g(x),$ where $g(x)=\lint{0,q_x}$ or $g(x)=\cint{0,q_x}$ for some idempotent point $q_x$ and  $v_x(y)=\max(x,y)$ otherwise, i.e., $v_x$ is evidently non-decreasing.
  Assume $x\notin G.$ If $U^e(x,y)= V^e(x,y)$ for all $y\in \uint$ then $v_x$ is evidently non-decreasing. Suppose $U^e(x,y)\neq V^e(x,y)$ for some $y\in \cint{0,e}.$ Then $y\in H,$ however, if we denote $A_1=\inf\{a_k\mid c_k=y\}$ and $A_2=\sup\{b_k\mid c_k=y\}$
  then  $U^e(x,y)\neq V^e(x,y)$ implies $x\in \lint{A_1,A_2}.$ If  $x\in \lint{a_k,b_k}$ for some $k\in K$ then $U^e(x,y)=x$ and $U^e(x,z)=z$ for all $z>y,$ i.e.,  $x=U^e(x,y)\neq V^e(x,y)=y$ will not violate the monotonicity of $v_x.$ If there exist $k_1,k_2\in H_y$ such that $b_{k_1}\leq x \leq b_{k_2}$ then the ordinal sum construction implies again $U^e(x,y)=x$ and $U^e(x,z)=z$ for all $z>y,$ i.e.,
  the monotonicity of $v_x$ is not violated. Finally assume $x=A_1,$ $A_1\notin \{a_i\}_{i\in K}.$ Then $A_1\in H_x^*$ and
  $U^e(x,z)=x$ for all $z<y,$ and $U^e(x,z)=z$ for all $z>y,$ i.e.,
  the monotonicity of $v_x$ is not violated.

\

Associativity: assume $x,y,z\in \uint ,$  if $V^e$ is internal on $\{x,y,z\}^2$ then Corollary \ref{lemass} implies the associativity.
Since all points  $q\in G\cup H$ are idempotent, i.e., $V^e$  is internal on $\{q\}\times\uint,$ we have only to check the case when at least one of $x,y,z$ does not belong to $G\cup H.$ If $x,y,z \in \uint\setminus (G\cup H)$ then the associativity follows from the associativity of $U^e.$
Further, if exactly one of $x,y,z$ belongs to $\uint\setminus (G\cup H)$ then this element differs from the two others and similarly as in Corollary \ref{lemass}
we can show the associativity. Thus the only remaining case is when exactly two of $x,y,z$ belong to $\uint\setminus (G\cup H).$

Assume  $x,y\in \uint\setminus (G\cup H),$ $x\leq y$ and $z \in G\cup H.$ If $V^e(x,y)\in \{x,y\}$ then the associativity can be shown as above.
Thus suppose $V^e(x,y)\notin \{x,y\}.$ We have either $V^e(x,y)<\min(x,y),$ or $V^e(x,y)>\max(x,y),$ or  $V^e(x,y)\in \opint{x,y}.$ Since the cases when $V^e(x,y)<\min(x,y)$ and when $V^e(x,y)>\max(x,y)$ are analogous we will show only the first one, i.e., if $V^e(x,y)<\min(x,y).$ Then
$x,y\in \opint{0,e}$ and if $z\in \cint{0,e}$ the associativity follows from the associativity of $U^e.$ Suppose
$z\in \rint{e,1}.$ Since $V^e(x,y)<\min(x,y)$ there exist idempotent points $a,b\in \cint{0,e}$ such that $x,y\in \opint{a,b}$ and $U(w,w)<w$ for all $w\in \opint{a,b}.$
If $U^e(x,z)=V^e(x,z)$ and  $U^e(y,z)=V^e(y,z)$ then the associativity follows from the associativity of $U^e.$ In the other case we have
$V^e(x,z)=V^e(y,z)=z$ and then $V^e(V^e(x,z),y)=V^e(z,y)=z=V^e(z,x) = V^e(V^e(y,z),x).$
If $V^e(x,y)\in \opint{a,b}$  the extended ordinal sum construction implies also $V^e(V^e(x,y),z)=z.$ If $V^e(x,y)=a$ then $U^e$ on $\cint{a,b}^2$ is a nilpotent t-norm and then again $V^e(V^e(x,y),z)=V^e(a,z)=z.$

Now suppose $V^e(x,y)\in \opint{x,y},$ i.e., $x\leq e\leq y.$ We will assume $z\in H$ (the case when $z\in G$ is analogical).
Here $U^e(x,y)=V^e(x,y)$ and  $U^e(z,y)=V^e(z,y).$ If $U^e(x,z)=V^e(x,z)$ the associativity follows from the associativity of $U^e.$ Thus suppose
$x=U^e(x,z)\neq V^e(x,z)=z.$ Then $y\geq z$ implies $V^e(x,y)=y$ what is a contradiction. However, for $e\leq y<z$ we have
$x=U^e(x,y)= V^e(x,y)$ which is again a contradiction.

\

Continuity of $T_{V^e}$ and $C_{V^e}$: since $U^e(x,y)\neq V^e(x,y)$ only on $\lint{0,e}\times \rint{e,1} \cup \rint{e,1}\times \lint{0,e}$
we have $T_{V^e}=T_{U^e}$ and $C_{V^e}=C_{U^e},$ i.e.,  $V^e\in \mathcal{U}$ if and only if   $U^e\in \mathcal{U}.$

\end{proof}

{\bf Proof of Theorem $4.$}

\begin{proof}
First we will identify the points $a_k,b_k,c_k,d_k$ for all $k\in K.$
Since $U\in \mathcal{U}$ we can assume the characterizing multi-function $r$  of $U.$ Recall that the graph of this function can be divided into connected maximal segments which are either strictly decreasing, or the horizontal segments, or the vertical segments (see Remark \ref{remseg}).  Due to Corollary \ref{corid} we know that borders of each horizontal and vertical segments are idempotent points, i.e., division is done in idempotent points.

Further since $T_U$ and $C_U$ are continuous the set of idempotent elements $I_U$ of $U$ is closed and thus $\cint{0,e}\setminus I_U = \bigcup\limits_{m\in M} \opint{a_m,b_m}$ and
$\cint{e,1}\setminus I_U = \bigcup\limits_{l\in L} \opint{c_l,d_l}$ for some countable index sets $M,L$ and two systems of open non-empty disjoint intervals $(\opint{a_m,b_m})_{m\in M}$
and $(\opint{c_l,d_l})_{l\in L}.$
 From the previous discussion it is evident that on each interval $\opint{a_m,b_m}$ for $m\in M$ ($\opint{c_l,d_l}$ for $l\in L$) the multi-function $r$ is either strictly decreasing, or it is a horizontal segment.
 Since the infinite union of closed sets need not to be closed, we define
  $B^*=\overline{\bigcup\limits_{m\in M} \cint{a_m,b_m}}\setminus \bigcup\limits_{m\in M} \cint{a_m,b_m},$ where $\overline{S}$ is the closure of the set $S,$  and
  $C^*=\overline{\bigcup\limits_{l\in L} \cint{c_l,d_l}}\setminus \bigcup\limits_{l\in L} \cint{c_l,d_l}.$ Since  $M$ and $L$ are countable also
  $B^*$ and $C^*$ are countable.
   If $\bigcup\limits_{m\in M} \cint{a_m,b_m}\cup B^*=\cint{0,e}$ and $\bigcup\limits_{l\in L} \cint{c_l,d_l}\cup C^*=\cint{e,1}$ we do not need any further preparations. In the opposite case we have
$\cint{0,e}\setminus (B^*\cup \bigcup\limits_{m\in M} \cint{a_m,b_m}) = \bigcup\limits_{o\in O^*} \opint{a_o,b_o},$ where   $(\opint{a_o,b_o})_{o\in O^*}$ is a system of non-empty disjoint open intervals, i.e., $O^*$ is a countable index set and $\cint{e,1}\setminus (C^*\cup \bigcup\limits_{l\in L} \cint{c_l,d_l}) = \bigcup\limits_{q\in Q^*} \opint{c_q,d_q},$ where   $(\opint{c_q,d_q})_{q\in Q^*}$ is a system of non-empty disjoint open intervals, i.e., $Q^*$ is a countable index set. We denote $B=B^* \setminus \{a_o \mid o\in O^*\}$ and $C=C^* \setminus \{d_q \mid q\in Q^*\}.$
We will further divide each open interval  $\opint{a_o,b_o}$ for $o\in O^*$ ($\opint{c_q,d_q}$ for $q\in Q^*$) into maximal open intervals that
are with respect to a multi-function $r$ either strictly decreasing, or horizontal segments, thus obtaining a new system of open disjoint intervals  $(\opint{a_o,b_o})_{o\in O}$ ($(\opint{c_q,d_q})_{q\in Q}$). Then $\bigcup\limits_{m\in M} \cint{a_m,b_m} \cup
\bigcup\limits_{o\in O} \cint{a_o,b_o} \cup B=\cint{0,e}$ and $\bigcup\limits_{l\in L} \cint{c_l,d_l} \cup \bigcup\limits_{q\in Q} \cint{c_q,d_q} \cup C =\cint{e,1}.$ Note that we can (and will) select the index sets $M,O,L,Q$ in such a way that they are mutually disjoint.

We will now define four disjoint index sets $K_1,\ldots,K_4$ for division of $\cint{0,e}$ (and similarly we will define $N_1,\ldots,N_4$ for $\cint{e,1}$) and four systems of open disjoint subintervals of $\cint{0,e}$ which are $(\opint{a_k,b_k})_{k\in K^*}$ for $K^*=K_1,\ldots,K_4,$ where
\begin{mylist}
\item $K_1\subseteq M$ and for $k\in K_1$ the multi-function $r$ is strictly decreasing on $\opint{a_k,b_k},$
\item $K_2\subseteq M$ and for $k\in K_2$ the multi-function $r$ is a horizontal segment  on $\cint{a_k,b_k},$
\item $K_3\subseteq O$ and for $k\in K_3$ the multi-function $r$ is strictly decreasing on $\opint{a_k,b_k},$
\item $K_4\subseteq O$ and for $k\in K_4$ the multi-function $r$ is a horizontal segment  on $\cint{a_k,b_k}.$
\end{mylist}

Now we can define summands: for $k\in K_1\cup K_3$ we obtain summands on supports of the type $\lint{a_k,b_k} \cup \rint{c,d},$
where  $\lim\limits_{x\lran a^+}r(x)=d$ and  $\lim\limits_{x\lran b^-}r(x)=c.$
Note that for $k\in K_1$ properties of $r$ ensures that there is no idempotent in $\opint{c,d}$ (see \cite{repord}) and since $c,d$ are idempotents then on $(\lint{a_k,b_k} \cup \{U(b_k,c)\} \cup \rint{c,d})^2$ the uninorm $U$ is isomorphic to a representable uninorm. If  $k\in K_3$
then on $(\lint{a_k,b_k} \cup \{U(b_k,c)\} \cup \rint{c,d})^2$ the uninorm $U$ is isomorphic to an s-internal uninorm (see Lemma \ref{lemmm}). Thus summands for $k\in K_1$ can be paired with summands for $n\in N_1$ and summands for $k\in K_3$ can be paired with summands for $n\in N_3.$ We will now relabel
each $n\in N_1$ to corresponding $k\in K_1$ and each $n\in N_3$ to corresponding $k\in K_3.$

 For  $k\in K_2\cup K_4$
the multi-function $r$ is a horizontal segment  on $\cint{a_k,b_k},$ i.e., $r(x)=\{y\}$ for all
$x\in \opint{a_k,b_k}$ and some $y\in \uint,$ and  we obtain summands on supports of the type $\lint{a_k,b_k} \cup \rint{y,y}.$
Similarly, we define summands  for $n\in N_2\cup N_4,$ i.e., we obtain summands on supports of the type $\lint{y,y} \cup \rint{c_n,d_n}.$

Further we will assume index sets $I,J$ such that $I,J,M,O,L,Q$ are mutually disjoint and $B=\{b_k\}_{k\in I}$ and
$C=\{c_k\}_{k\in J},$ i.e., $I$ and $J$ are countable. The definition of the set $B$ ($C$) implies that for each $k\in I$ ($k\in J$) there exists a strictly monotone sequence
$\{b_{m_i}\}_{i\in \mathbb{N}},$ where   $ m_i\in M$ ($\{c_{l_i}\}_{i\in \mathbb{N}},$ where $l_i\in L$) which converges to $b_k$ ($c_k$). For
$c_k=\lim\limits_{n\lran \infty} c_{m_i} $  ($b_k=\lim\limits_{n\lran \infty} b_{l_i} $) we define a summand on $\lint{b_k,b_k}\cup \rint{c_k,c_k}.$ In the case that for $k\in I$ ($k\in J$) we have  $c_k\in C$ ($b_k\in B$) we can pair these two points, and thus we obtain three mutually disjoint index sets,
$I_1,J_1,IJ$ such that $I=I_1\cup IJ$ and $J=J_1\cup IJ^*,$ where for each $k\in IJ$ there exists $m\in IJ^*$ such that $m$ and $k$ are paired, and vice-versa.

Now if we put $K=K_1\cup K_2 \cup K_3 \cup K_4 \cup N_2 \cup N_4 \cup I_1 \cup J_1 \cup IJ$ we will obtain all summands for the extended ordinal sum $V^e.$ We now relabel all relevant points in such a way that we obtain summands on supports $\lint{a_k,b_k} \cup \rint{c_k,d_k}$ for $k\in K.$ The properties of the multi-function $r$ ensures that the two systems $(\opint{a_k,b_k})_{k\in K}$  and   $(\opint{c_k,d_k})_{k\in K}$ are anti-comonotone and fulfill all required properties.

Similarly as in Proposition \ref{proext} we define sets $G$ and $H.$ Note that $G\cup H$ is a subset of the range of values of $r$ on the horizontal segments of the characterizing multi-function $r.$

We will show that for any $x\in \uint$ we have $U(x,y)=V^e(x,y)$ for all $y\in \uint.$
The extended ordinal sum construction ensures that $U=V^e$ on $\cint{0,e}^2$ and on $\cint{e,1}^2.$

Now we fix a $x\in \uint.$
If $x=e$ then evidently  $U(x,y)=V^e(x,y)$ for all $y\in \uint.$ From now on we will suppose $x<e$ (the case when $x>e$ is analogous).
We have to show $U(x,y)=V^e(x,y)$ for all $y\in \rint{e,1}.$
First suppose that $x\in G$ or  $y\in  H.$  As these cases are analogous we will suppose $x\in G.$
Then $\mathrm{Card}(r(x))>1$ and thus $r(x)=\cint{y_1,y_2}$ for some $y_1,y_2\in \cint{e,1}.$
Then  due to Corollary \ref{corid}  points $y_1$ and $y_2$ are idempotent and $u_x$ is either continuous or non-continuous in $y\in\cint{y_1,y_2}.$
If $u_x$ is non-continuous in $y\in\rint{y_1,y_2}$ then $U(x,z)=V^e(x,z)$ for all $z\in \uint,$ where $g(x)=\lint{0,y}$ or
$g(x)=\cint{0,y}$ (see Proposition \ref{proext}). Suppose $y=y_1.$ If $U(x,y_1)=\max(x,y_1)$ then $x\notin G.$ Thus  $U(x,y_1)=\min(x,y_1)$ and again
$U(x,z)=V^e(x,z)$ for all $z\in \uint,$ where $g(x)=\cint{0,y_1}.$ If $u_x$ is continuous then Lemma \ref{lemconv2} implies
$y_2=1$ and thus  $U(x,z)=V^e(x,z)$ for all $z\in \uint,$ where $g(x)=\lint{0,1}$ or
$g(x)=\cint{0,1}.$

Further we will assume $x\notin G$ and $y\notin H$ and we will divide the proof into several cases:

\noindent Case $1$:  if $\mathrm{Card}(r(x))>1,$ i.e., $r(x)=\cint{y_1,y_2}$ for some $y_1,y_2\in \cint{e,1}.$
 Since $x\notin G$ we have $U(x,z)=V^e(x,z)$ for all $z\in \uint,$ where $g(x)=\lint{0,y_1}.$

\noindent Case $2$: if $\mathrm{Card}(r(x))=1$ and $x\in \opint{a_k,b_k}$ for some $k\in K.$

 If $k\in K_1$ we set  $\lim\limits_{x\lran a_k^+}r(x)=d$ and  $\lim\limits_{x\lran b_k^-}r(x)=c.$ Then on $(\lint{a_k,b_k} \cup \{U(b_k,c)\} \cup \rint{c,d})^2$ the uninorm $U$  is a transformation of a representable uninorm given by \eqref{unitr}. The monotonicity, the neural element $e$ and the fact that $a_k,b_k,c,d$ are idempotent implies $U(x,y)=\min(x,y)=V^e(x,y)$
for $y\in \cint{b_k,c}.$ Further, since $x\in \opint{a_k,b_k}$ we have $U(d,x)=d=\max(d,x)$ and  the monotonicity gives
 $U(y,x)=\max(y,x)=V^e(x,y)$ for all  $y>d.$ If $k\in K_2$ then by Lemma \ref{lemhoin} the uninorm $U$ is pseudo-internal on $\opint{a_k,b_k}\times \uint,$ i.e.,
 if $r(x)=\{z\}$ for all $x\in \opint{a_k,b_k}$ we have $U(x,y)=\min(x,y)=V^e(x,y)$ for $y\in \opint{e,z}$ and $U(x,y)=\max(x,y)=V^e(x,y)$ for $y>z.$ The only question arises when $y=z,$ however, then $y \in H$ what was solved above. If $k\in K_3$ we again  set  $\lim\limits_{x\lran a_k^+}r(x)=d$ and  $\lim\limits_{x\lran b_k^-}r(x)=c.$ Then Proposition \ref{idid} implies that on $(\lint{a_k,b_k} \cup \{U(b_k,c)\} \cup \rint{c,d})^2$ the uninorm $U$  is a transformation of an s-internal uninorm. Further, monotonicity and strict monotonicity of $r$ on $\opint{a_k,b_k}$  implies that $U(x,y)=\min(x,y)=V^e(x,y)$
for $y\in \cint{b_k,c}$ and $U(x,y)=\max(x,y)=V^e(x,y)$ for $y>d.$ Finally, if  $k\in K_4$ and  $r(x)=\{z\}$ for all $x\in \opint{a_k,b_k},$ since
 $U$ is internal on $\opint{a_k,b_k}\times \uint,$ we have $U(x,y)=\min(x,y)=V^e(x,y)$ for $y\in \opint{e,z}$ and $U(x,y)=\max(x,y)=V^e(x,y)$ for $y>z.$ Again,
 $z\in  H$ and thus the case when $y=z$ was solved above. Summarising, if  $x\in \opint{a_k,b_k}$ for some $k\in K$
 and $y\notin  H$ then $U(x,y)=V^e(x,y).$

  \noindent Case $3$: if $\mathrm{Card}(r(x))=1$ and $x=a_k$ for some $k\in K.$
If $k\in K_1\cup K_3$  then $\mathrm{Card}(r(x))=1$ implies that
 $u_{a_k}$ is non-continuous in $d=\lim\limits_{x\lran a_k^+}r(x)$ and since $a_k$ is idempotent we have  $U(x,y)=V^e(x,y)$ for all $y\in \cint{e,1}\setminus H.$
 If $k\in K_2\cup K_4$ then $\mathrm{Card}(r(x))=1$ implies that $u_{a_k}$ is non-continuous in $z,$ such that $r(q)=\{z\}$ for all $q\in \opint{a_k,b_k},$ and  since $a_k$ is idempotent we have  $U(x,y)=V^e(x,y)$ for all $y\in \cint{e,1}\setminus H.$

\noindent Case $4$: if $\mathrm{Card}(r(x))=1$ and  $x=b_k$ for some $k\in K.$
 If $x=a_m$ for some   $m\in K$ the result follows from the previous case. In the other case
 $b_k\in B$ and the result follows from the monotonicity of $U$ and all previous cases.

\end{proof}

\end{document}